\newcommand{\IR}{\mathds{R}}
\newcommand{\B}{\mathds{B}}
\newcommand{\IP}{\mathds{P}}
\newcommand{\Z}{\mathds{Z}}
\newcommand{\IE}{\mathds{E}}
\newcommand{\N}{\mathds{N}}
\newcommand{\T}{\mathds{T}}
\newcommand{\II}{\mathbb{I}}
\newcommand{\ID}{\mathbb{D}}
\newcommand{\IU}{\mathbb{U}}
\newcommand{\cC}{\mathcal{C}}
\newcommand{\cD}{\mathcal{D}}
\newcommand{\cI}{\mathcal{I}}
\newcommand{\cG}{\mathcal{G}}
\newcommand{\cM}{\mathcal{M}}
\newcommand{\cN}{\mathcal{N}}
\newcommand{\cS}{\mathcal{S}}
\newcommand{\cR}{\mathcal{R}}
\newcommand{\cT}{\mathcal{T}}
\newcommand{\cU}{\mathcal{U}}
\newcommand{\bfX}{\mathbf{X}}
\newcommand{\bfZ}{\mathbf{Z}}
\newcommand{\bfeta}{\boldsymbol{\eta}}
\newcommand{\bfxi}{\boldsymbol{\xi}}
\newcommand{\bfzeta}{\boldsymbol{\zeta}}
\newcommand{\birth}{b}
\newcommand{\death}{d}
\newcommand{\del}{\partial}
\newcommand{\zero}{o}
\newcommand{\szero}{\mathbf{0}}
\newcommand{\1}{\mathds{1}}
\DeclareMathOperator{\supp}{supp}
\theoremstyle{definition}
\newtheorem{definition}{Definition}[section]
\newtheorem{example}[definition]{Example}
\newtheorem{remark}[definition]{Remark}
\theoremstyle{plain}
\newtheorem{theorem}[definition]{Theorem}
\newtheorem{lemma}[definition]{Lemma}
\newtheorem{corollary}[definition]{Corollary}
\newtheorem{proposition}[definition]{Proposition}
\newtheorem{assumption}[definition]{Assumption}
\renewenvironment{proof}[1][\proofname]{%
	\par\pushQED{\qed}\normalfont%
	\topsep6\p@\@plus6\p@\relax
	\trivlist\item[\hskip\labelsep\bfseries#1\@addpunct{.}]%
	\ignorespaces
}{%
	\popQED\endtrivlist\@endpefalse
}
\title{Contact process with viral load}
\author{\textsc{Marco Seiler}\footnote{Goethe University Frankfurt, Robert-Mayer-Straße 10, 60486 Frankfurt am Main, Germany, \texttt{seiler@math.uni-frankfurt.de}}}
\begin{document}

\date{\today}
\maketitle

\begin{abstract}
    In this article, we present two novel variants of the contact process. In the first variant individuals carry a viral load. An individual with viral load zero is classified as healthy and otherwise infected. If an individual becomes infected it begins with a viral load of one, which then evolves according to a Birth-Death process. In this model, viral load indicates severity of the infection such that individuals with a higher load can be more infectious. Moreover, the recovery times of individual is not necessarily exponentially distributed and can even be chosen to follow a power-law distribution.
   	In the second variant individuals are permanently infected albeit in two states: actively infected or dormant. The dynamics of these individual states are again governed by a Birth-Death process. Dormant infections do not interact with neighbouring individuals but may reactivate spontaneously. Active infections reactivate dormant neighbours at a constant rate and may become dormant themselves.

   	We present for both variants a Poisson construction. For the first model, we study the phase transition of survival and discuss existence of a non-trivial upper invariant law. Additionally, we derive a duality relationship between the two variant, which we use to uncover a phase transition regarding invariant distributions in the second variant.
\end{abstract}
{\footnotesize\textit{2020 Mathematics Subject Classification --} Primary 60K35
	\\
\textit{Keywords --} Interacting Particle Systems; Multitype Contact Process; Phase Transition; Viral Load}
\section{Introduction}

The classical contact process introduced by Harris~\cite{harris1974contact} is a well-studied model, utilised, for example, to model the spread of disease in a spatially structured population typically represented by a graph. In this model, individuals are either infected, transmitting this infection to healthy neighbours at a certain rate $\lambda$, or they recover at a rate of 1 and become healthy. Although the dynamics of the model may seem quite simplistic, several interesting results and phenomena can be observed. A comprehensive overview of classical results can be found in the two books by Liggett~\cite{liggett1985interacting} and \cite{liggett1999stochastic}. More recent developments concerning contact processes on random graphs are discussed in the book by Valesin~\cite{valesin2024contact}.

Two aspects of this model that can be criticised for their lack of realism, depending on the application, are that individuals are strictly categorised as either infected or healthy, and that recovery times are assumed to be exponentially distributed. It is plausible to expect that not all infected individuals are equally infectious. Additionally, empirical data suggest that the incubation periods for certain diseases do not follow an exponential distribution. There is a substantial body of empirical research on this topic; for example, Lover et al.~\cite{Lover2014distribution} provide insights in the context of malaria. 

From a modelling perspective, the assumption of exponential waiting times ensures that the contact process is a Markov process, making it analytically and computationally tractable. To address the aforementioned limitations, we introduce a variant of the contact process in this article, where the state of an individual is indicated by a viral load instead. When an individual becomes infected, a viral load of 1 is assigned, which evolves according to a Birth-Death process. This individual remains infected as long as the viral load is positive and becomes healthy when the viral load reaches 0. One inspiration for this modelling choice comes from bacterial infections, where bacteria can multiply by cell division (i.e., split into two new copies) or die.

This mechanism allows us to let the infection rate $\Lambda(n)$ depend on the viral load $n$ of an individual, where $\Lambda(n)$ will be a non-decreasing function. Thus, the viral load can be viewed as an indication of how infectious an individual is. Furthermore, in this model, an individual recovers when the viral load reaches 0. Consequently, the distribution of recovery times is determined by the extinction time of a Birth-Death process. We will see that, depending on the choice of rate functions, this framework includes the exponential distribution but also accommodates a wider class of distributions while retaining the Markov property of the system, such as power-law distributions.

We provide sufficient conditions for when the process has a positive survival probability, meaning that an infection persists for all time or goes extinct almost surely (see Proposition 2.4 and Theorem 2.5). Additionally, we offer a criterion for the existence of an upper invariant law in Theorem 2.6.

For the special case of constant infection rates, we show in Proposition 2.8 that there exists a duality relation with another variant of the contact process introduced in this article, which we call the contact process with lingering infections. In this model, individuals are never truly healthy; rather, they are either actively infected or the infection is in a dormant state. This dormant state is again modelled by a Birth-Death process, similar to the viral load, indicating how deeply dormant an infection or infected individual is. A dormant infection can re-emerge either through infection by an actively infected neighbour or if the internal birth-death process reaches $0$. There are several viral infections, such as the herpes simplex virus (HSV), that never fully heal and may reoccur frequently later, which could be modelled using this framework.

Finally, we use the duality relation to demonstrate in Theorem 2.10 the existence of a phase transition in this model and identify the corresponding critical value with the critical value for survival of the contact process with viral load. 
\section{Models and main results}
Let $G=\Gamma(\mathfrak{g},\mathfrak{s})$ be a Cayley graph, where the group $\mathfrak{g}$ is generated by the generating set $\mathfrak{s}$ which we assume to be finite and symmetric. As usual, $V$ denotes the set of all vertices and $E$ the (undirected) edge set. In particular, the graph $G=(V,E)$ is connected, transitive and has bounded degrees. This class of graphs contains, in particular, the integer lattices and regular trees. 

We denote by $\zero \in V$ a designated vertex, which we call the origin. Furthermore, $\cN_x:=\{y\in V: \{x,y\}\in E\}$ is the neighbourhood of $x$ and by transitivity follows that $D:=|\cN_{\zero}|=|\cN_x|$ for all $x\in V$, where $|A|$ of a set $A$ denotes it cardinality. Moreover, we denote by $r(x,y)$ the graph distance of two vertices $x,y\in V$.

Since we will consider $\N_0^V$ as state space, we denote by $\szero$ the all zero configuration, i.e.\ $\szero(x)=0$ for all $x\in V$. In addition, we set $\delta_{A}$ as the configuration such that $\delta_{A}(x)=1$ if $x\in A$ and $\delta_{A}(x)=0$ otherwise. If $A=\{x\}$, then we will write $\delta_{x}$ instead of $\delta_{\{x\}}$.  Finally, $|\eta|:=\sum_{x\in V}\eta(x)$ for all $\eta\in \N_0^V$.

We denote by $\overline{\N}_0=\N_0\cup\{\infty\}$ the usual one-point compactification. Sometimes it will be more convenient to consider the compactification $\overline{\N}_0^V$ of our state space instead. As usual we equip $\overline{\N}_0^V$ with the product topology. Then let $(\nu_n)_{n\geq 0}$ and $\nu$ be measures on $\overline{\N}_0^V$ with respect to the Borel $\sigma$-algebra. We say $\nu_n\Rightarrow \nu$ on $\overline{\N}_0^V$, i.e.\ convergences weakly, if $\int f(\eta)\nu_n(d\eta)\to \int f(\eta)\nu(d\eta)$ as $n\to\infty$ for all continuous functions $f\in C\big(\overline{\N}_0^V\big)$.
\subsection{The contact process with viral load}
The \emph{contact process with viral load} $(\bfeta_t)_{t\geq0}$, which we abbreviate with CPVL, is a Markov process, which takes values in $\N_0^V$.
We call $\bfeta_t(x)\in \N_0$ the \emph{virus load} of $x\in V$ at time $t$ and if $\bfeta_t(x)>0$ we call $x$ \emph{infected} and otherwise \emph{healthy}. Let $\Lambda,\birth,\death:\N_0\to [0,\infty)$ with $\death(0)=\birth(0)=0$. Now if $\bfeta_t$ is in state $\eta$, then we have for every $x$ the transitions
\begin{equation}\label{CPViral}
	\begin{aligned} 
		\eta(x)&\to \eta(x)\vee 1\quad \,\,\, \text{at rate } \sum_{y\in \cN_x}\Lambda(\eta(y))\1_{\{\eta(y)>0\}},\\
		\eta(x)&\to \eta(x)+1\quad \text{ at rate } \birth(\eta(x))
		\text{ and }\\
		\eta(x)&\to \eta(x)-1\quad \text{ at rate } \death(\eta(x)).
	\end{aligned}
\end{equation}
where $\Lambda(\,\cdot\,)$ is called the \emph{infection rate} function and $\vee$ denotes the maximum operation. 

We indicate the initial configuration $\eta$ by adding a superscript, that is $(\bfeta_t^{\eta})_{t\geq 0}$. We also adopt the convention that if $\eta=\delta_{x}$ is the initial configuration, then we indicate this by $(\bfeta_t^{x})_{t\geq 0}$.

Next, we impose some assumptions on the rate functions. For that we introduce the Birth-Death (BD) process $X=(X_t)_{t\geq0}$ with initial value $X_0=1$. This means that if $X_t$ is in state $n$, it has transitions 
\begin{equation*}
	\begin{aligned}
		n\to n+1 &\text{ with rate } \birth(n),\\
		n\to n-1 &\text{ with rate } \death(n).
	\end{aligned}    
\end{equation*}
Recall that $\supp(f):=\{n\in \N_0: f(n)>0\}$ is the support of a function $f:\N_0\to \N_0$.

\begin{assumption}\label{ass:RateAssumption} Let $\Lambda,\birth,\death:\N_0\to [0,\infty)$ be non-constant functions with $\death(0)=\birth(0)=0$, which satisfy the following:
	\begin{itemize}
		\item  $\supp(\death(\,\cdot\,+1))=\supp(\birth)\cup \{0\}$ and there either exists a $K\in \N$ such that $\supp(b)=\{1,\dots,K\}$ or $\supp(b)=\N$.
		\item $\birth, \death\in O(n)$ as $n \to \infty$, i.e.\ $\birth$ and $\death$ are at most of linear growth.
		\item $\Lambda$ is non-decreasing and $\IE[\Lambda(X_t)]<\infty$ for all $t\geq 0$.
	\end{itemize}
\end{assumption}
Throughout the rest of the paper, we assume that the rate functions $\Lambda,\birth,\death:\N_0\to \N_0$ satisfy Assumption~\ref{ass:RateAssumption}. Note that the assumption that $\birth, \death\in O(n)$ is necessary, as it ensures non-explosiveness of both processes we are considering. The assumption that $\Lambda$ is non-decreasing ensures that the CPVL is monotone; however, the model would still be well-defined without it.

Let us denote by $C(\N_0^V)$ the Banach space of all bounded continuous functions $f:\N_0^V\to \IR$ equipped with the supremum norm $||f||_{\infty}=\sup_{\eta\in\N^V_0}|f(\eta)|$. Furthermore, we define
	\begin{equation*}
		\cC_{\text{fin}}(\N_0^V):=\{f\in C(\N_0^V): f \text{ only depend on finitely many coordinates}\}.
	\end{equation*}
	We construct the CPVL $\bfeta=(\bfeta_t)_{t\geq 0}$ explicitly via a Poisson construction, so that we obtain an existence result in the following sense.
\begin{theorem}\label{thm:ExistenceOfCPVL}
		Let $\Lambda,\birth,\death:\N_0\to [0,\infty)$ satisfy Assumption~\ref{ass:RateAssumption}. Then, there exists a Feller process $\bfeta=(\bfeta_t)_{t\geq 0}$ with values in $\N_0^V$, whose generator agrees with the operator associated to the transitions \eqref{CPViral} on $\cC_{\text{fin}}(\N_0^V)$. Moreover, the processes $\bfeta^{\eta}=(\bfeta^{\eta}_t)_{t\geq 0}$ with $\bfeta^{\eta}_{0}=\eta$ are defined on the same probability space for all $\eta\in\N_0^V$.
\end{theorem}
This Poisson construction not only provides existence of the process $\bfeta$, but allows us to show that $\bfeta$ has the following properties:
\begin{enumerate}
		\item Suppose $\Lambda',\birth',\death':\N_0\to \N_0$ satisfy Assumption~\ref{ass:RateAssumption}, and $\Lambda'\geq \Lambda$, $\birth'\geq \birth$  and $\death'\leq \death$. Then, there exists a CPVL $\bfeta'$ with infection rate $\Lambda'$ and rate functions $b'$ and $d'$ such that $\bfeta'_t\geq \bfeta_t$ for all $t\geq 0$.
		\item The CPVL $\bfeta$ is additive, i.e.\ $\bfeta_t^{\eta_1\vee\eta_2}= \bfeta_t^{\eta_1} \vee\bfeta_t^{\eta_2}$ for all $t\geq 0$ with $\eta_1,\eta_2\in \N^V$.
		\item The CPVL $\bfeta$ is a monotone Markov process. This means that if $\eta_1,\eta_2\in \N^V$ with $\eta_1\leq  \eta_2$, then $\bfeta_t^{\eta_1}\leq  \bfeta_t^{\eta_2}$ for all $t\geq 0$.
	\end{enumerate}
	\begin{remark}
		Note that, even though we show that $\bfeta$ is a Feller process, we will not prove that the operator associated to the transitions \eqref{CPViral} is indeed the generator of $\bfeta$, since we do not use the explicit form of the generator. In fact, this does not follow from a direct application of the standard results found in \cite{liggett1985interacting} or \cite{swart2022course}, since the state space $\N_0^V$ is not compact, and the rate functions $\birth$, $\death$ and $\Lambda$ are possibly unbounded.  	
	\end{remark}
We denote the first hitting-time of $0$ of the process $X$ by
\begin{equation*}
	\tau_{\text{rec}}:=\inf\{t\geq 0: X_t=0\}.
\end{equation*}
After being infected, the viral load of a vertex $x$ performs the dynamics of a BD-process with the same law as $X$. Thus, the recovery time of the said vertex $x$ has the same distribution as $\tau_{\text{rec}}$, i.e.\
\begin{equation*}
	\tau_{\text{rec}}\stackrel{d}{=} \inf\{t>0: \bfeta^{\zero}_t(\zero)=0\}.
\end{equation*}
This means, we can model the spread of an infection in a population where the recovery times have the same distribution as the extinction time as a continuous time branching process. In the following two examples, we will see that this, for example, includes distributions with exponential as well as power-law tails.   
\begin{example}\label{ex:BirthDeathProcess}
	Choose $\birth(n)=(n+(1-a)_{+})\1_{\{n>0\}}$ and $\death(n)=(n+(a-1)_{+})\1_{\{n>0\}}$ with $a>0$ and $(\,\cdot\,)_{+}=\max\{\,\cdot\,,0\}$. Then there exists a $A>0$ such that
	\begin{equation*}
		\IP(\tau_{\text{rec}}>t)\sim A t^{-a} \quad \text{ for all } t>0.
	\end{equation*}
	This was shown in \cite{formanov1986markov} and \cite{formanov1987markov} for $a\geq 1$ and in \cite{vatutin1987conditional} and \cite{zubkov1987conditional} for $a\in (0,1]$.
\end{example}

\begin{example}\label{ex:BirthDeathProcessExponetialTails}
	Let $\birth(n)=\alpha n$ and $\death(n)=\beta n$ with $0\leq \alpha<\beta$. Then by standard methods it can be shown that there exist $A,B>0$
	\begin{equation*}
		\IP(\tau_{\text{rec}}>t)\sim A \exp(-Bt) \quad \text{ for all } t>0.
	\end{equation*}
\end{example}
An interesting special case is $\Lambda(n)=\lambda$ for all $n\geq 0$, with $\birth$ and $\death$ chosen as in Example~\ref{ex:BirthDeathProcess}. The projection $(\1_{\{\bfeta_t> 0\}})_{t\geq 0}$ of the resulting process $\bfeta$ can be interpreted as a contact process with power-law distributed recovery times. Note that $\bfeta$ is a Markov process, while $(\1_{\{\bfeta_t> 0\}})_{t\geq 0}$ is generally not. Therefore, a significant advantage of working with the CPVL $\bfeta$ instead of $(\1_{\{\bfeta_t> 0\}})_{t\geq 0}$ or other contact processes with non-exponential recovery times is that we can utilize the rich theory of Markov processes in this case.

First of all we show that the criticality of the survival probability of the CPVL does not depend on the precise initial configuration. 
\begin{proposition}\label{prop:CriticalValueIndependentOfStart}
	It holds that 
	\begin{equation*}
		\IP(|\bfeta^{\zero}_t|>0 \,\forall\, t\geq 0)>0\,\, \Leftrightarrow\,\,\IP(|\bfeta^{\eta}_t|>0 \,\forall\, t\geq 0)>0
	\end{equation*}
	for all $\eta \in\N^V$  with $0<|\eta|<\infty$.
\end{proposition}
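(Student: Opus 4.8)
The statement is the standard "survival does not depend on finite initial condition" fact, and the plan is to prove the two implications separately using monotonicity and the graphical/coupling structure of the CPVL.

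First, the direction $\IP(|\bfeta^{\eta}_t|>0\,\forall t)>0 \Rightarrow \IP(|\bfeta^{\zero}_t|>0\,\forall t)>0$ for general $\eta$ with $0<|\eta|<\infty$: the natural tool is attractiveness/monotonicity of the CPVL. One first checks that the transition rates in \eqref{CPViral} are monotone in the usual coordinatewise partial order on $\N_0^V$ — the infection rate $\sum_{y\in\cN_x}\Lambda(\eta(y))\1_{\{\eta(y)>0\}}$ is non-decreasing in $\eta$ because $\Lambda$ is non-decreasing (Assumption~\ref{ass:RateAssumption}), and the birth/death part can be coupled on each site so that the larger configuration stays larger (this is where the linear growth bound on $\birth,\death$ ensures non-explosion and a well-defined coupling). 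Hence there is a coupling with $\bfeta^{\zero}_t \le \bfeta^{\eta}_t$ whenever $\delta_{\zero}\le \eta$; but a priori $\delta_\zero$ need not be $\le \eta$ coordinatewise if $\eta(\zero)=0$. To handle this, pick any single vertex $x_0\in\supp(\eta)$; then $\delta_{x_0}\le \eta$, so by monotonicity survival from $\eta$ implies survival from $\delta_{x_0}$, and by transitivity of $G$ (the graph automorphism sending $x_0\mapsto \zero$) survival from $\delta_{x_0}$ is equivalent to survival from $\delta_{\zero}=\delta_{\zero}$. This closes this direction.

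Second, the direction $\IP(|\bfeta^{\zero}_t|>0\,\forall t)>0 \Rightarrow \IP(|\bfeta^{\eta}_t|>0\,\forall t)>0$: again by monotonicity, if $\delta_\zero \le \eta$ we are immediately done. For general $\eta$ with $0<|\eta|<\infty$, write $\eta$ as a "sum" of its sites: since $\eta\ge \delta_{x_0}$ for any $x_0\in\supp(\eta)$, monotonicity gives $\bfeta^{x_0}_t\le\bfeta^{\eta}_t$ in a coupling, hence survival from $\delta_{x_0}$ implies survival from $\eta$; and survival from $\delta_{x_0}$ is equivalent to survival from $\delta_\zero$ by transitivity. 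So in fact both implications reduce to the single-site case plus transitivity, and the real content is: (i) the CPVL is attractive, and (ii) survival from $\delta_{\zero}$ is equivalent to survival from $\delta_{x_0}$ for any $x_0$, which follows from the graph being vertex-transitive together with the fact that the dynamics in \eqref{CPViral} are invariant under graph automorphisms.

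The main obstacle I expect is establishing attractiveness rigorously, i.e.\ constructing the monotone coupling on the infinite graph. Unlike the basic contact process, the per-site state here is unbounded, so one must argue that the process is well-defined (no explosion) — here the hypothesis $\birth(n),\death(n)\in O(n)$ is exactly what gives a dominating branching process and rules out explosion — and then build a graphical representation (Poisson processes for infection attempts, and an independent BD mechanism per site driven by coupled Poisson clocks) in which a pointwise-larger initial configuration yields a pointwise-larger configuration at all later times. Checking the monotonicity of the rate functions coordinate by coordinate is routine given $\Lambda$ non-decreasing and $\death(0)=\birth(0)=0$; the only subtlety is coupling the birth-death part at a site where the two configurations currently agree versus disagree, which is handled by the standard "use the same clocks, and let the extra mass in the larger configuration evolve with its own independent clocks" construction. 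Once attractiveness and automorphism-invariance are in hand, the proposition follows in a few lines as sketched above.
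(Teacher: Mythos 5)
Your second implication (survival from $\delta_{\zero}$ implies survival from $\eta$) is fine: transitivity moves $\delta_{\zero}$ to $\delta_{x_0}$ for some $x_0\in\supp(\eta)$, and monotonicity then passes from $\delta_{x_0}\leq\eta$ to $\eta$. But your first implication contains a genuine gap: you claim that from $\delta_{x_0}\le \eta$ ``by monotonicity survival from $\eta$ implies survival from $\delta_{x_0}$''. Monotonicity gives exactly the opposite inequality: the coupling yields $\bfeta^{\delta_{x_0}}_t\le \bfeta^{\eta}_t$, hence $\IP(\text{survival from }\delta_{x_0})\le \IP(\text{survival from }\eta)$, and positivity of the larger probability says nothing about the smaller one. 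This is precisely the nontrivial direction of the proposition: a priori the process could survive when started from a large viral load (e.g.\ $\eta(x)=M$ at several sites) and still die out almost surely when started from a single site with load $1$. Attractiveness plus vertex-transitivity alone cannot close this, so your reduction of ``both implications'' to the single-site case is not valid as stated.

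What is missing is the argument the paper uses. First, additivity with respect to $\vee$ (Lemma~4.8 in the paper) gives $\bfeta^{\eta}_t=\bigvee_{x:\eta(x)>0}\bfeta^{\eta(x)\delta_x}_t$, so survival from $\eta$ forces positive survival probability from $M\delta_x$ for some $x$, where $M:=\max_x\eta(x)<\infty$ (using monotonicity in the harmless upward direction $\eta(x)\delta_x\le M\delta_x$). Second, and this is the key step your sketch lacks, one observes that
\begin{equation*}
	\IP\big(\bfeta^{\delta_x}_1=M\delta_x\big)>0,
\end{equation*}
since in time $1$ the birth clock at $x$ can ring $M-1$ times with no death event and no successful infection of a neighbour; then the Markov property yields
\begin{equation*}
	\IP\big(|\bfeta^{\delta_x}_t|>0\ \forall t\ge 0\big)\ \ge\ \IP\big(|\bfeta^{M\delta_x}_t|>0\ \forall t\ge 0\big)\,\IP\big(\bfeta^{\delta_x}_1=M\delta_x\big)>0,
\end{equation*}
and transitivity transfers this to $\delta_{\zero}$. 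Your discussion of constructing the monotone coupling and ruling out explosion is sensible background (and is indeed carried out in the paper's Section~4), but without the additivity decomposition and the positive-probability ``climb to $M\delta_x$ in finite time'' combined with the Markov property, the implication from survival of a general finite $\eta$ down to survival from a single site with load $1$ is not established.
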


Next, we will provide a sufficient criterion that the CPVL has positive probability to survive. This can be proven via a coupling with a standard contact process. 
\begin{proposition}\label{thm:upperbound}
	If $\Lambda(1)> \death(1)\lambda^{CP}_c$, where $\lambda^{CP}_c$ denotes the critical infection rate of a standard contact process on $G$, then the CPVL has a positive survival probability.
\end{proposition}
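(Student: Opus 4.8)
The plan is to couple the CPVL with a standard contact process $(\xi_t)_{t\geq 0}$ on $G$ with infection rate $\lambda := \Lambda(1)/d(1)$ and recovery rate $1$, in such a way that $\xi_t \leq \1_{\{\bfeta_t > 0\}}$ pointwise for all $t\geq 0$, provided they start from the same (finite) configuration. Since $\lambda = \Lambda(1)/d(1) > \lambda^{CP}_c$ by hypothesis, the standard contact process survives with positive probability from any nonempty finite start, and hence so does the CPVL. By Proposition~\ref{prop:CriticalValueIndependentOfStart} it suffices to work with a single-site initial configuration $\delta_{\zero}$.

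The coupling is built on a graphical representation. First I would argue that it is enough to treat the case where $\birth \equiv 0$ and $\death \equiv \death(1)\1_{\{n=1\}}$ — i.e.\ a ``truncated'' CPVL in which viral loads never exceed $1$: a straightforward monotone coupling shows that the full CPVL dominates this truncated one (births only help the infection persist and infection rates are non-decreasing in the load, so silencing growth above level $1$ can only hurt the truncated process). The truncated CPVL is nothing but a contact process in which a site, once infected, recovers at rate $\death(1)$ and infects each healthy neighbour at rate $\Lambda(1)$; equivalently, after rescaling time by $\death(1)$, it is a standard contact process with infection parameter $\Lambda(1)/\death(1)$. Wait — one must be careful about which ``$d$'' appears: the statement writes $d(1)$, and the clean identification is that the truncated CPVL is a standard contact process with recovery rate $\death(1)$ and infection rate $\Lambda(1)$, so its effective parameter is $\Lambda(1)/\death(1)$, which exceeds $\lambda^{CP}_c$ exactly under the stated hypothesis (here $d=\death$ is the death-rate function of the BD process, so $d(1)=\death(1)$).

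The key steps, in order, are: (i) set up the graphical representation of the full CPVL — independent Poisson processes of infection arrows $x\to y$ along each oriented edge with rate given by $\Lambda(\cdot)$ evaluated at the source's load, death marks at each site with rate $\death(\cdot)$, and birth marks with rate $\birth(\cdot)$; (ii) construct the truncated CPVL on the \emph{same} probability space by ignoring birth marks and only using death marks when the load equals $1$, and check by induction over the (locally finitely many) marks that its infected set is contained in that of the full CPVL; (iii) observe that the truncated CPVL, viewed through $\1_{\{\cdot>0\}}$, is exactly a standard contact process with rates $(\Lambda(1),\death(1))$, hence survives with positive probability from $\delta_{\zero}$ because $\Lambda(1)/\death(1) = \Lambda(1)/d(1) > \lambda^{CP}_c$; (iv) conclude that $\IP(|\bfeta^{\zero}_t|>0\ \forall t)>0$, and invoke Proposition~\ref{prop:CriticalValueIndependentOfStart} for general finite $\eta$.

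The main obstacle is step (ii): making the domination $\xi_t \leq \1_{\{\bfeta_t>0\}}$ genuinely rigorous requires verifying that every mechanism acting on the truncated process has a matching or weaker effect on the full process under the coupling — infection arrows fire at least as fast out of a site of the full process (since $\Lambda$ is non-decreasing and the full load is at least the truncated one whenever both are positive), and a recovery in the full process requires the load to reach $0$, which under the death-only dynamics happens no sooner than in the truncated process. One should also confirm that Assumption~\ref{ass:RateAssumption} (in particular $\death(1)>0$, which follows since $\supp(\death(\cdot+1)) \supseteq \{0\}$ forces $\death(1)>0$, and the at-most-linear growth of $\birth,\death$) guarantees the graphical representation is well-defined with no explosion, so that the pathwise comparison is valid for all $t$.
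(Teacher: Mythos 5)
Your proposal is correct and follows essentially the same route as the paper: the paper also obtains a lower bound by switching off births (via its monotonicity-in-rates coupling, Lemma~\ref{lem:MonotonictyInRates} with $\birth'\equiv 0$), identifies the resulting truncated process as a standard contact process with infection rate $\Lambda(1)$ and recovery rate $\death(1)$, and concludes survival from $\Lambda(1)/\death(1)>\lambda^{CP}_c$. Your explicit graphical-representation verification of the domination is exactly the content of that lemma, so there is no substantive difference.
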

On the other hand, it is not clear for which functions $\Lambda$, the infection goes extinct almost surely, i.e.\ that the survival probability of the CPVL is $0$. Therefore, we will next state a sufficient condition when the CPVL dies out almost surely. This is especially of interest in the cases where the recovery times of a vertex are heavy-tailed distributed.
\begin{theorem}\label{thm:LowerBound}
	If $\IE\Big[\int_{0}^{\tau_{\text{rec}}}\Lambda(X_s)\mathrm{d}s\Big]\leq D^{-1}$, then the CPVL dies out almost surely.
\end{theorem}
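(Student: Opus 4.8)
The plan is to dominate the CPVL started from a single infected site by a (sub)critical branching process and then to apply the Galton--Watson extinction theorem. By Proposition~\ref{prop:CriticalValueIndependentOfStart} it suffices to prove that $\bfeta^{\zero}$ dies out almost surely, i.e.\ to treat the initial configuration $\delta_{\zero}$.

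First I would set up the genealogical construction of $\bfeta^{\zero}$. Each vertex carries, from the instant it is infected until it recovers, a fresh BD trajectory (independent across sites and across successive infections of the same site) which drives its viral load; while $x$ is infected with load $n$ it emits a transmission along each oriented edge $x\to y$, $y\in\cN_x$, at rate $\Lambda(n)$, and a transmission received at a healthy vertex sets its load to $1$ (one received at an infected vertex has no effect). Recording \emph{every} emitted transmission as a birth --- not only those landing on a healthy vertex --- yields a branching process $\mathcal{B}$: an individual with load trajectory $(X_s)_{0\le s\le \tau_{\text{rec}}}$ produces on each of its $D$ neighbour sites, conditionally on the trajectory and independently over the $D$ sites, a $\mathrm{Poisson}(\mu)$ number of children, where $\mu:=\int_0^{\tau_{\text{rec}}}\Lambda(X_s)\,\mathrm{d}s$. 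Feeding both processes the same transmission clocks and giving matched individuals the same BD trajectory gives a coupling under which the infected set of $\bfeta^{\zero}_t$ is contained in the set of sites occupied by $\mathcal{B}$ at time $t$, so $|\bfeta^{\zero}_t|\le Z_t$ with $Z_t$ the number of individuals of $\mathcal{B}$ alive at time $t$. I expect this step to be the delicate one: since $\Lambda$ need not be bounded and the viral load may be unbounded, a single individual's transmission rate is unbounded, so $\mathcal{B}$ is built from Cox (doubly stochastic Poisson) clocks and even its well-definedness requires knowing $\mu<\infty$ a.s., which in turn we only obtain from the hypothesis.

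Next, the offspring numbers of distinct individuals of $\mathcal{B}$ are i.i.d., so the generation sizes $(G_n)_{n\ge0}$ with $G_0=1$ form a Galton--Watson process; conditionally on the trajectory the total offspring number $N$ of an individual is $\mathrm{Poisson}(D\mu)$, hence
\[
  \IE[N]\;=\;D\,\IE[\mu]\;=\;D\,\IE\Big[\int_0^{\tau_{\text{rec}}}\Lambda(X_s)\,\mathrm{d}s\Big]\;\le\;D\cdot D^{-1}\;=\;1
\]
by assumption. Because $\IE[\mu]<\infty$ we have $\mu<\infty$ a.s., which (i) legitimises the construction of $\mathcal{B}$, (ii) forces $\tau_{\text{rec}}<\infty$ a.s.\ --- otherwise $\mu=\infty$ on $\{\tau_{\text{rec}}=\infty\}$, using that $\Lambda$ is non-decreasing and non-constant and that $X$ drifts to $\infty$ there, which is impossible --- and (iii) gives $\IP(N=0)=\IE[e^{-D\mu}]>0$, so the offspring law is not the degenerate $\delta_1$. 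Thus the Galton--Watson process has offspring mean at most $1$ and is non-degenerate, and therefore $\IP(G_n=0\text{ for some }n)=1$.

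Finally, on the almost sure event that $G_n=0$ for some finite $n$, all later generations are empty and $\sum_k G_k<\infty$ a.s.\ (each $G_k$ is a.s.\ finite, since $\IE[G_k]=(\IE[N])^k<\infty$), so $\mathcal{B}$ ever contains only finitely many individuals; as each of them has a.s.\ finite lifetime $\tau_{\text{rec}}$, there is an a.s.\ finite random time past which $Z_t=0$. Hence $|\bfeta^{\zero}_t|\le Z_t=0$ for all large $t$ almost surely, so $\bfeta^{\zero}$ dies out almost surely, and Proposition~\ref{prop:CriticalValueIndependentOfStart} upgrades this to every finite initial configuration.
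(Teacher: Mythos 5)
Your proposal is correct and follows essentially the same route as the paper: dominate the CPVL started from $\delta_{\zero}$ by a branching process in which \emph{every} emitted transmission (even onto already infected sites) counts as a birth, note that conditionally on the BD trajectory the offspring number is $\mathrm{Poisson}\big(D\int_0^{\tau_{\text{rec}}}\Lambda(X_s)\,\mathrm{d}s\big)$, and conclude by Galton--Watson extinction together with finite lifetimes and Proposition~\ref{prop:CriticalValueIndependentOfStart}. If anything, your treatment of the boundary case is slightly more careful than the paper's write-up, since you justify extinction at offspring mean exactly $1$ via the non-degeneracy $\IP(N=0)=\IE[e^{-D\mu}]>0$ and spell out why $\tau_{\text{rec}}<\infty$ almost surely under the hypothesis.
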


Since $\bfeta$ is a Markov process, a classical tool to study the long time behaviour are invariant distributions. Thus, we establish a sufficient criterion when an upper invariant law $\overline{\nu}$ exists, which concentrates on the state space $\N_0^V$. The upper invariant law $\overline{\nu}$ is the largest invariant law of $\bfeta$; that is if $\nu$ is another invariant law of $\bfeta$, then $\nu \preceq \overline{\nu}$, where $\preceq$ denotes the stochastic order. This criterion depends solely on the recovery time $\tau_{\text{rec}}$.
\begin{theorem}\label{thm:InvariantDistributionCPVL}
	If $\IE[\tau_{\text{rec}}]<\infty$, then there exists an upper invariant law $\overline{\nu}$ for $(\bfeta_t)_{t\geq 0}$. In particular, $\overline{\nu}(\N_0^V)=1$ and $\overline{\nu}(\{\szero\})\in \{0,1\}$.
\end{theorem}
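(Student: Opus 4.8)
\emph{Plan of proof.} I would first fix a monotone graphical representation of $(\bfeta_t)_{t\geq0}$. To each site $x$ and level $k\geq1$ attach independent Poisson processes of birth marks (rate $\birth(k)$) and death marks (rate $\death(k)$), acting on $\bfeta_t(x)$ only when $\bfeta_t(x)=k$; to each ordered edge $(y,x)$ and level $k\geq1$ attach a Poisson process of infection marks at rate $\Lambda(k)-\Lambda(k-1)\geq0$ (set $\Lambda(0):=0$), which performs $\bfeta_t(x)\mapsto\bfeta_t(x)\vee1$ whenever $\bfeta_t(y)\geq k$. Since $\birth,\death$ grow at most linearly and $G$ has bounded degree this defines $\bfeta$ without explosion, and a short case analysis of the three mark types against the coordinatewise partial order shows the induced coupling is monotone; hence $\bfeta$ is attractive, and it is translation invariant because the marking rates are spatially homogeneous. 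One structural fact I would record: while a site is infected, infection marks act on it trivially, so its load evolves autonomously as a copy of the BD process $X$ until it hits $0$; consequently every ``infected period'' of a site has length distributed as $\tau_{\text{rec}}$, independently of the rest of the system.

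To construct $\overline{\nu}$ I would truncate. Let $\bfeta^{(M)}$ be obtained by discarding all birth marks at levels $\geq M$; it takes values in the compact space $\{0,\dots,M\}^V$, whose largest element is $M\one$, so $t\mapsto\mathcal L(\bfeta^{(M),M\one}_t)$ is stochastically decreasing and converges weakly to the upper invariant law $\overline{\nu}^{(M)}$ of $\bfeta^{(M)}$. Increasing $M$ only adds birth marks and raises the initial state, so the basic coupling gives $\overline{\nu}^{(M)}\preceq\overline{\nu}^{(M+1)}$, and hence $\overline{\nu}:=\lim_{M\to\infty}\overline{\nu}^{(M)}$ exists as a weak limit on $\overline{\N}_0^V$; a routine approximation argument (the truncated dynamics converge to that of $\bfeta$ on $C(\overline{\N}_0^V)$ and $\bfeta$ is Feller) then shows $\overline{\nu}$ is invariant for $\bfeta$, and this is the upper invariant law. (One could instead work with weak limit points of the time averages $\frac1t\int_0^t\mathcal L(\bfeta^{\one}_s)\,\mathrm{d}s$; the estimate below adapts to that route.)

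The heart of the matter is $\overline{\nu}(\N_0^V)=1$, which amounts to $\overline{\nu}(\{\eta:\eta(\zero)=\infty\})=0$ for every site $\zero$ (and then a union over $V$), i.e.\ to $\sup_M\overline{\nu}^{(M)}(\{\eta:\eta(\zero)\geq K\})\to0$ as $K\to\infty$. Under the stationary law $\overline{\nu}^{(M)}$ the number of infected periods of $\zero$ beginning in a time window differs by at most one from the number of recoveries, so a rate conservation argument gives that the rate at which periods begin equals the recovery rate $\death(1)\,\overline{\nu}^{(M)}(\{\eta(\zero)=1\})\leq\death(1)$; since each period is an autonomous run of the $M$-truncated BD process $X^{(M)}$ from $1$, the stationary probability that $\zero$ carries load $\geq K$ equals this rate times $\IE[\int_0^{\tau^{(M)}_{\text{rec}}}\1_{\{X^{(M)}_s\geq K\}}\,\mathrm{d}s]$, whence, using the monotone coupling $X^{(M)}\leq X$ and $\tau^{(M)}_{\text{rec}}\leq\tau_{\text{rec}}$ to replace the truncated process by $X$,
\begin{equation*}
\overline{\nu}^{(M)}\big(\{\eta:\eta(\zero)\geq K\}\big)\;\leq\;\death(1)\,\IE\Big[\int_0^{\tau_{\text{rec}}}\1_{\{X_s\geq K\}}\,\mathrm{d}s\Big],
\end{equation*}
a bound now uniform in $M$. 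This is exactly where $\IE[\tau_{\text{rec}}]<\infty$ enters: the integrand is dominated by $\tau_{\text{rec}}\in L^1$ and, since on $\{\tau_{\text{rec}}<\infty\}$ the path $(X_s)_{s\leq\tau_{\text{rec}}}$ has finitely many jumps and is therefore bounded, it tends to $0$ a.s.\ as $K\to\infty$; dominated convergence gives that the right-hand side vanishes, and since $\{\eta:\eta(\zero)\geq K\}$ is closed, weak convergence yields $\overline{\nu}(\{\eta:\eta(\zero)\geq K\})\leq\death(1)\,\IE[\int_0^{\tau_{\text{rec}}}\1_{\{X_s\geq K\}}\,\mathrm{d}s]\to0$. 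For the dichotomy $\overline{\nu}(\{\szero\})\in\{0,1\}$ I would use that each $\overline{\nu}^{(M)}$, being the upper invariant law of an attractive translation invariant Feller process on a compact product space, is ergodic under translations \cite{liggett1985interacting}; since $\{\szero\}$ is a translation invariant event this gives $\overline{\nu}^{(M)}(\{\szero\})\in\{0,1\}$, and since $\{\szero\}$ is closed and down-closed while $\overline{\nu}^{(M)}\uparrow\overline{\nu}$, we get $\overline{\nu}(\{\szero\})=\lim_M\overline{\nu}^{(M)}(\{\szero\})\in\{0,1\}$.

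The main obstacle is precisely the step $\overline{\nu}(\N_0^V)=1$. In contrast to the classical contact process there is no useful maximal configuration inside $\N_0^V$ --- the top $\infty\one$ of $\overline{\N}_0^V$ is absorbing, since the internal BD dynamics has $\infty$ as a natural boundary and does not come down from infinity --- so $\overline{\nu}$ must be built through approximations and one has to rule out an escape of mass to infinity separately. The quantitative input is that $\IE[\tau_{\text{rec}}]<\infty$ makes the excursion occupation measure $k\mapsto\IE[\int_0^{\tau_{\text{rec}}}\1_{\{X_s=k\}}\,\mathrm{d}s]$ a finite measure (of total mass $\IE[\tau_{\text{rec}}]$), which is what controls the single-site tail uniformly in the truncation level; if $\IE[\tau_{\text{rec}}]=\infty$, e.g.\ for the critical BD process of Example~\ref{ex:BirthDeathProcess} with $a=1$, the single-site load is null recurrent and genuinely leaks to infinity, so the hypothesis is sharp.
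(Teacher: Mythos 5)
Your route is genuinely different from the paper's, and its core is sound. The paper obtains both existence and concentration on $\N_0^V$ in one stroke: it dominates the CPVL by the product system $\bfzeta$ of independent birth–death processes reflected at $1$ (down rate $\death(n)\1_{\{n>1\}}$), whose product stationary law $\pi$ exists exactly because $\IE[\tau_{\text{rec}}]<\infty$ makes the single-site reflected chain positive recurrent; it then lets $\overline{\nu}=\lim_{t\to\infty}\pi T_t$ (a stochastically decreasing limit), so $\overline{\nu}\preceq\pi$ gives $\overline{\nu}(\N_0^V)=1$ for free, maximality follows from the comparison with $(S_t)$, and the $0$--$1$ law is a short conditioning argument using maximality. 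You instead build $\overline{\nu}$ from compact level-$M$ truncations and control the escape of mass by a stationarity (rate-conservation plus excursion-occupation) estimate, $\overline{\nu}^{(M)}(\eta(\zero)\geq K)\leq \death(1)\,\IE\big[\int_0^{\tau_{\text{rec}}}\1_{\{X_s\geq K\}}\,ds\big]$, uniformly in $M$; this is correct (the excursion of an infected site is indeed an autonomous copy of $X$, and the onset intensity is bounded by $\death(1)$ by level-crossing/generator stationarity), it isolates where $\IE[\tau_{\text{rec}}]<\infty$ enters, and it yields an explicit quantitative tail bound that the paper's argument does not. (Minor point: quote the set $\{\eta(\zero)\geq K\}$ as clopen, or use that its indicator is increasing together with $\overline{\nu}^{(M)}\uparrow\overline{\nu}$; "closed" alone gives the portmanteau inequality in the wrong direction.)

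Two steps are under-justified, and the first is a genuine gap in your framework. You never establish that your limit is the \emph{upper} invariant law, and this does not come cheaply from the truncations: discarding birth marks above level $M$ makes the truncated dynamics \emph{smaller}, so $\bfeta^{(M),M\one}$ does not dominate the untruncated process started from $\eta\leq M\one$ (at a site sitting at level $M$ the full process can jump to $M+1$ while the truncated one cannot), and hence an arbitrary invariant law $\mu$ cannot be pushed below $\overline{\nu}$ by this coupling. To get maximality you essentially need the paper's comparison anyway: any invariant $\mu$ satisfies $\mu=\mu T_t\preceq \pi T_t\to\overline{\nu}$, using $\mu\preceq\pi$ via the independent-BD domination and ergodicity of the reflected single-site chain. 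Relatedly, your invariance passage "truncated dynamics converge on $C(\overline{\N}_0^V)$ and $\bfeta$ is Feller" needs care: the paper proves the Feller property only on $\N_0^V$ (the rates are unbounded, so there is no evident extension to the compactification), so you should first use your uniform tail bound to localise on $\N_0^V$ and then compare $P^{(M)}_t$ with $P_t$ there. Second, the ergodicity you invoke for $\overline{\nu}^{(M)}$ is not a stated general theorem for attractive systems; it can be proved for these single-site-jump, finite-range, bounded-rate truncations via positive correlations plus finite propagation speed, but it is simpler to replace that step by the paper's argument: $\szero$ is absorbing, so if $p=\overline{\nu}(\{\szero\})\in(0,1)$ then $\nu=\overline{\nu}(\cdot\mid\{\szero\}^c)$ is invariant and dominates $\overline{\nu}$, contradicting maximality unless $p=0$ — which again requires the maximality you still owe.
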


At the end of this section we want to briefly discuss an example that describes a population where individuals become more infectious as their viral load increases. A possible choice to model this situation is to set the infection rate function to $\Lambda(n)=\lambda \cdot n^{\gamma}$ for all $n\geq 0$ with $\lambda,\gamma\geq 0$ and consider $\birth$ and $\death$ chosen as in Example~\ref{ex:BirthDeathProcess} or \ref{ex:BirthDeathProcessExponetialTails}.

In this case, there exists a critical infection parameter for survival $\lambda_c(\gamma)$. Note that in Section~\ref{sec:Construction} we show that the process $\bfeta$ is monotone with respect to $\lambda$, and thus, we can define
\begin{equation*}
	\lambda_c(\gamma):=\sup\{\lambda>0: \IP_{\lambda}(\exists t\geq 0: |\bfeta^{\zero}_t|=0)=1\}.
\end{equation*}
By Proposition~\ref{prop:CriticalValueIndependentOfStart} the critical value does not depend on the initial configuration $\bfeta_0$ as long as it is finite and not $0$ everywhere. In the case of constant infection rates, i.e.\ $\gamma=0$, we write $\lambda_c=\lambda_c(0)$. The following result is a consequence of Proposition~\ref{thm:upperbound} and Theorem~\ref{thm:LowerBound}.
\begin{corollary}\label{Cor:SurvivalForSpecialCase}
	Let $\Lambda(n)=\lambda n^{\gamma}$ with $\gamma,\lambda\geq 0$. 
	\begin{enumerate}
		\item If $\birth(n)=(n+(1-a)_{+})\1_{\{n>0\}}$ and $\death(n)=(n+(a-1)_{+})\1_{\{n>0\}}$ with $a> 1$, then $\lambda_c(\gamma)\in(0,\infty)$ if $\gamma<a-1$.
		\item If $\birth(n)=\alpha n$ and $\death(n)=\beta n$ for all $n\geq 0$ with $\alpha<\beta$, then $\lambda_c(\gamma)\in(0,\infty)$ for all $\gamma\geq 0$.
	\end{enumerate} 
\end{corollary}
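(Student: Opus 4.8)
The plan is to derive both statements from Theorem~\ref{thm:LowerBound}. Since $\Lambda(n)=\lambda n^{\gamma}$, the quantity there factorises as $\IE\bigl[\int_{0}^{\tau_{\text{rec}}}\Lambda(X_s)\,\mathrm{d}s\bigr]=\lambda M_{\gamma}$ with $M_{\gamma}:=\IE\bigl[\int_{0}^{\tau_{\text{rec}}}X_s^{\gamma}\,\mathrm{d}s\bigr]$, so it suffices to prove $M_{\gamma}<\infty$ under the respective hypotheses: then for every $\lambda\leq(D M_{\gamma})^{-1}$ Theorem~\ref{thm:LowerBound} shows that the CPVL with infection rate $\lambda n^{\gamma}$ dies out almost surely, hence $\lambda_c(\gamma)\geq(D M_{\gamma})^{-1}>0$. (One checks that Assumption~\ref{ass:RateAssumption} is met in both cases; if $\alpha=0$ in case~(2) the viral load is a pure death process and the model reduces to an ordinary contact process, for which the claim is classical, so we may take $\alpha>0$ there.)

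To bound $M_{\gamma}$ I will run a Foster--Lyapunov argument for the Birth-Death process $X$, whose generator is $\cL f(n)=\birth(n)\bigl(f(n+1)-f(n)\bigr)+\death(n)\bigl(f(n-1)-f(n)\bigr)$, with $\cL f(0)=0$. The core is to produce $\psi\colon\N_0\to[0,\infty)$ with $\psi(0)=0$ and
\begin{equation}\label{eq:lyapunov}
    \cL\psi(n)\leq -n^{\gamma}\qquad\text{for all }n\geq1 .
\end{equation}
Granting this, set $\tau_N:=\inf\{t\geq0:X_t\geq N\}\wedge\tau_{\text{rec}}$ and apply Dynkin's formula to the bounded process $\bigl(X_{t\wedge\tau_N}\bigr)_{t\geq0}$: this gives $\IE\bigl[\psi(X_{t\wedge\tau_N})\bigr]=\psi(1)+\IE\bigl[\int_{0}^{t\wedge\tau_N}\cL\psi(X_s)\,\mathrm{d}s\bigr]\leq\psi(1)-\IE\bigl[\int_{0}^{t\wedge\tau_N}X_s^{\gamma}\,\mathrm{d}s\bigr]$, and since $\psi\geq0$ we conclude $\IE\bigl[\int_{0}^{t\wedge\tau_N}X_s^{\gamma}\,\mathrm{d}s\bigr]\leq\psi(1)$. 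As $\birth$ and $\death$ are at most linear, $X$ is non-explosive, so $\tau_N\uparrow\tau_{\text{rec}}$ almost surely; letting $N\to\infty$ and then $t\to\infty$ with monotone convergence yields $M_{\gamma}\leq\psi(1)<\infty$.

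It remains to construct $\psi$, which is the only step where the two cases really differ. I will look for $\psi$ of the form $\psi(n)=c_1 n^{\gamma+1}+c_2 n$ with constants $c_1,c_2>0$ (so $\psi\geq0$ and $\psi(0)=0$ are automatic). A Taylor expansion of $(n\pm1)^{\gamma+1}$ gives, as $n\to\infty$,
\begin{equation*}
    \cL\bigl(n^{\gamma+1}\bigr)=
    \begin{cases}
        (\gamma+1)(\gamma+1-a)\,n^{\gamma}+O\bigl(n^{\gamma-1}\bigr), & \text{in case (1)},\\
        (\gamma+1)(\alpha-\beta)\,n^{\gamma+1}+O\bigl(n^{\gamma}\bigr), & \text{in case (2)},
    \end{cases}
\end{equation*}
while $\cL(n)=\birth(n)-\death(n)$ equals $-(a-1)$ in case~(1) and $-(\beta-\alpha)n$ in case~(2), in both cases strictly negative and bounded away from $0$ on $\{n\geq1\}$. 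In case~(2) the leading term of $\cL(n^{\gamma+1})$ has order $n^{\gamma+1}$ with negative coefficient (because $\alpha<\beta$), which dominates $-n^{\gamma}$ for all large $n$ and any $c_1>0$. In case~(1) the births cancel the linear part of the deaths, leaving a leading term of order exactly $n^{\gamma}$ whose coefficient $(\gamma+1)(\gamma+1-a)$ is negative precisely because of the hypothesis $\gamma<a-1$; choosing $c_1$ large enough that $c_1(\gamma+1)(a-1-\gamma)>1$ then makes $\cL\bigl(c_1 n^{\gamma+1}\bigr)\leq-n^{\gamma}$ for all $n$ past some threshold $n_0$. In either case one is left with the finitely many $n\in\{1,\dots,n_0-1\}$, and there one picks $c_2$ large enough that the uniformly negative term $c_2\cL(n)$ swallows both the residual error and $-n^{\gamma}$. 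This yields \eqref{eq:lyapunov}, hence $M_{\gamma}\leq\psi(1)=c_1+c_2<\infty$, and both parts follow.

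I expect the computation of $\cL$ on the power function $n^{\gamma+1}$ — and the bookkeeping that separates the two regimes — to be the main point. The mechanism is that in case~(2) the \emph{linear} per-capita death rate produces an effective drift of order $n$, so $\cL(n^{\gamma+1})$ gains a full extra power and \eqref{eq:lyapunov} costs nothing; in case~(1) only the \emph{constant} excess death rate $a-1$ survives the cancellation, so $\cL(n^{\gamma+1})$ is of order $n^{\gamma}$ only and \eqref{eq:lyapunov} forces the sign condition $\gamma+1<a$. The remaining ingredients — validity of Dynkin's formula for the unbounded $\psi$ through the localising times $\tau_N$, non-explosiveness under the linear-growth bound on the rates, and finiteness of $\IE[X_t^{\gamma+1}]$ — are routine.
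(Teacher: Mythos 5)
Your argument is correct, and it reduces the corollary to Theorem~\ref{thm:LowerBound} exactly as the paper does; the difference lies entirely in how the key finiteness $M_\gamma=\IE\big[\int_0^{\tau_{\text{rec}}}X_s^\gamma\,\mathrm{d}s\big]<\infty$ is verified. The paper takes a more explicit route: it passes to the Birth--Death chain reflected at $1$, which is positive recurrent when $\IE[\tau_{\text{rec}}]<\infty$, uses the fact that the number of excursions from $1$ before absorption at $0$ is geometric with parameter $\death(1)/(\birth(1)+\death(1))$, and the occupation-time identity $\pi(y)\propto\IE\big[\int_0^{T_1}\1_{\{Z_t=y\}}\mathrm{d}t\big]$, to rewrite $\IE\big[\int_0^{\tau_{\text{rec}}}\Lambda(X_s)\mathrm{d}s\big]$ as an explicit series $\sum_n \Lambda(n)\,\death(n)^{-1}\prod_{j<n}\birth(j)/\death(j)$; convergence is then checked by Gamma-function asymptotics in case (1) (giving $\gamma<a-1$) and a geometric series in case (2). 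Your Foster--Lyapunov function $\psi(n)=c_1n^{\gamma+1}+c_2n$ with $\cL\psi\le -n^\gamma$, combined with Dynkin's formula along the localising times $\tau_N$ and non-explosion from the linear-growth bound on $\birth,\death$, is a sound alternative: your drift computation $\cL(n^{\gamma+1})=(\gamma+1)(\gamma+1-a)n^\gamma+O(n^{\gamma-1})$ in case (1) and $(\gamma+1)(\alpha-\beta)n^{\gamma+1}+O(n^\gamma)$ in case (2) is accurate, and the handling of small $n$ via the uniformly negative $\cL(n)$ term and of $\alpha=0$ as a degenerate contact-process case is fine. What each approach buys: the paper's computation yields an \emph{equivalence} (a necessary and sufficient series criterion for $\IE\big[\int_0^{\tau_{\text{rec}}}\Lambda(X_s)\mathrm{d}s\big]<\infty$), which makes transparent that the method is exhausted precisely at $\gamma=a-1$ in case (1) and gives explicit constants; your Lyapunov argument is shorter, avoids excursion identities and the exact stationary distribution, and adapts more easily to other rate functions with comparable drift, but only delivers the upper bound $M_\gamma\le\psi(1)$ rather than a closed-form expression.
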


It seems plausible that there are choices of functions $\birth$ and $\death$ such that $\tau_{\text{rec}}<\infty$ almost surely, but $\lambda_c(\gamma)=0$. We plan to study the existence of such a trivial phase transition in future work. See Section~\ref{sec:Discussion} for more details.

In the next section we will consider the special case, $\Lambda(n)\equiv\lambda>0$ for all $n\geq 1$ and state a duality relation. Thus, we briefly summarise that if $\IE[\tau_{\text{rec}}]<\infty$, Proposition~\ref{thm:upperbound} and Theorem~\ref{thm:LowerBound} imply
\begin{equation*}
	(D \IE[\tau_{\text{rec}}])^{-1}\leq \lambda_c\leq \death(1)\lambda^{CP}_c,
\end{equation*}
i.e.\ there is non-trivial phase transition. 
\subsection{Contact process with lingering infections}
In this section, we assume that $\supp(b)=\supp(d)=\N$. We call the Markov process $(\bfxi_t)_{t\geq0}$ that takes values in $\N_0^V$ \emph{contact process with lingering infection}, which we abbreviate with CPLI. We call $x$  active at time $t$ if $\bfxi_t(x)=0$ and dormant if $\bfxi_t(x)>0$ where $\bfxi_t(x)\in \N_0$ describes the level of dormancy. If $\bfxi_t$ is in state $\xi$, then we have for every $x$ the transitions
\begin{equation*}
	\begin{aligned} 
		\xi(x)&\to 0\hspace{4.1em} \text{at rate } \lambda\cdot |\{y\in \cN_x: \xi(y)=0\}|,\\
		\xi(x)&\to \xi(x)+1\quad \text{ at rate } \death(\xi(x)+1)
		\text{ and }\\
		\xi(x)&\to \xi(x)-1\quad \text{ at rate } \birth(\xi(x)),
	\end{aligned}
\end{equation*}
where $\lambda>0$. We again construct the CPLI $\bfxi=(\bfxi_t)_{t\geq 0}$ via a Poisson construction, such that all $\bfxi^{\xi}=(\bfxi^{\xi}_t)_{t\geq 0}$ with $\bfxi_{0}^{\xi}=\xi$ are defined on the same probability space.

In fact there exists a Siegmund duality relation of this process with the contact process with viral load with a constant infection rate.
\begin{proposition}\label{prop:Duality}
	Let $\Lambda(n)=\lambda\in(0,\infty)$ for all $n\geq 0$. The two models $(\bfxi^{\xi}_t)_{t\geq 0}$ and $(\bfeta^{\eta}_t)_{t\geq 0}$ satisfy a Siegmund duality, i.e.\ for $\xi,\eta\in \N_0^V$ it holds that
	\begin{equation*}
		\IP(\xi\geq \bfeta^{\eta}_t)=\IP(\bfxi^{\xi}_t\geq \eta) \qquad \text{ for all } t\geq 0.
	\end{equation*}
	In particular, this implies that $\lim_{t\to \infty}\IP(\bfxi^{\szero}_t(x)>0)$ exists and 
	\begin{equation*}
		\IP(\exists t\geq 0: |\bfeta^{x}_t|=0)=\lim_{t\to \infty}\IP(\bfxi^{\szero}_t(x)>0).
	\end{equation*}
\end{proposition}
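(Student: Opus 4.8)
The plan is to establish the duality first at the level of generators and then transfer it to the semigroups. Let $\cL_{\bfeta}$ and $\cL_{\bfxi}$ denote the formal generators of the two processes (for $\Lambda\equiv\lambda$) and introduce the Siegmund function for the coordinatewise order,
\begin{equation*}
	H(\eta,\xi):=\1_{\{\eta\leq\xi\}}=\prod_{x\in V}\1_{\{\eta(x)\leq\xi(x)\}},\qquad \eta,\xi\in\overline{\N}_0^V.
\end{equation*}
First I would prove the pointwise identity $(\cL_{\bfeta}H(\,\cdot\,,\xi))(\eta)=(\cL_{\bfxi}H(\eta,\,\cdot\,))(\xi)$ for all $\eta,\xi$, and then run the standard duality argument: the map $s\mapsto\IE[H(\bfeta_s^{\eta_0},\bfxi_{t-s}^{\xi_0})]$ has vanishing derivative on $[0,t]$, so that $\IP(\xi_0\geq\bfeta_t^{\eta_0})=\IP(\bfxi_t^{\xi_0}\geq\eta_0)$.

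For the generator identity I would split according to the defect set $F:=\{x\in V:\eta(x)>\xi(x)\}$. If $|F|\geq 2$, or $F=\{x_0\}$ with $\eta(x_0)\geq\xi(x_0)+2$, then no single transition of either process changes the value of $H$, so both sides vanish. If $F=\{x_0\}$ with $\eta(x_0)=\xi(x_0)+1$, the only move that turns $H$ from $0$ into $1$ is the death step $\eta(x_0)\to\eta(x_0)-1$ on the left, at rate $\death(\xi(x_0)+1)$, and the deepening step $\xi(x_0)\to\xi(x_0)+1$ on the right, at the same rate $\death(\xi(x_0)+1)$; hence both sides equal $\death(\xi(x_0)+1)$. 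If $F=\emptyset$, i.e.\ $\eta\leq\xi$, then on the left $H$ can only decrease through a birth at a site with $\eta(x)=\xi(x)$ (rate $\birth(\xi(x))$, using $\birth(0)=0$) or an infection of a site with $\xi(x)=0$ (rate $\lambda\,|\{y\in\cN_x:\eta(y)>0\}|$), while on the right $H$ can only decrease through an emergence at a site with $\eta(x)=\xi(x)$ (rate $\birth(\xi(x))$) or a reinfection of a site with $\eta(x)>0$ (rate $\lambda\,|\{y\in\cN_x:\xi(y)=0\}|$). The birth and emergence contributions match site by site, and the two infection contributions match after summing over $V$ because, $E$ being symmetric,
\begin{equation*}
	\sum_{x\in V}\1_{\{\xi(x)=0\}}\,|\{y\in\cN_x:\eta(y)>0\}|=\sum_{x\in V}\1_{\{\eta(x)>0\}}\,|\{y\in\cN_x:\xi(y)=0\}|,
\end{equation*}
both sides counting the ordered pairs $(x,y)$ with $\{x,y\}\in E$, $\xi(x)=0$ and $\eta(y)>0$.

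The transfer to the infinite-volume semigroups is the only technical point. I would work on the compactification $\overline{\N}_0^V$, using the graphical construction of Section~\ref{sec:Construction} to realise both processes; the CPLI is a well-defined Feller process since its reinfection rates are bounded by $\lambda D$ and $\birth,\death$ grow at most linearly, which also keeps the infected region of $\bfeta$ and the dormancy levels of $\bfxi$ under control on compact time intervals. Since $H$ is only upper semicontinuous, I would first prove the duality for finite initial configurations --- where the relevant sums are finite and the generator identity yields the semigroup identity directly --- and then extend to arbitrary $\eta_0,\xi_0\in\N_0^V$ by a monotone approximation, approximating $\xi_0$ from above and $\eta_0$ from below by finite configurations and invoking the attractiveness of both processes. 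I expect reconciling the discontinuity of the Siegmund function with the unbounded internal rates to be the main obstacle, whereas the combinatorial identity above is the conceptual heart and is elementary.

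For the two consequences I would specialise to $\xi_0=\szero$ and $\eta_0=\delta_x$, which gives
\begin{equation*}
	\IP(\bfxi_t^{\szero}(x)>0)=\IP(\bfxi_t^{\szero}\geq\delta_x)=\IP(\szero\geq\bfeta_t^{\delta_x})=\IP(\bfeta_t^{\delta_x}=\szero).
\end{equation*}
Every transition rate of $\bfeta$ vanishes at $\szero$ (here $\Lambda\equiv\lambda$, so the infection rate at a healthy configuration is $0$), hence $\szero$ is absorbing and $t\mapsto\IP(\bfeta_t^{\delta_x}=\szero)$ is non-decreasing. Therefore $\lim_{t\to\infty}\IP(\bfxi_t^{\szero}(x)>0)$ exists and equals $\IP(\exists\,t\geq 0:\bfeta_t^{\delta_x}=\szero)=\IP(\exists\,t\geq 0:|\bfeta_t^{\delta_x}|=0)$, which is the claimed identity.
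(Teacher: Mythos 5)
Your route is genuinely different from the paper's. You set up a generator-level Siegmund duality with $H(\eta,\xi)=\1_{\{\eta\leq\xi\}}$ and then invoke the standard ``constant along the diagonal'' semigroup argument, whereas the paper proves a pathwise duality (Theorem~\ref{thm:DualityRelationFlow}): both processes are realised on the \emph{same} Poisson graphical representation, the CPLI running backwards in time, and one checks jump by jump that every potential transition preserves the order event, so that $s\mapsto\IP\big(\bfeta^{\eta}_s\leq\bfxi^{\xi}_{t-s}\big)$ is constant; Proposition~\ref{prop:Duality} is then immediate, with no generator or semigroup machinery. Your combinatorial verification of the generator identity is correct (the case analysis over the defect set, the matching rates $\death(\xi(x_0)+1)$ for the death/deepening moves, and the ordered-pair counting identity for the infection terms), and the deduction of the two stated consequences from the duality relation, using that $\szero$ is absorbing for $\bfeta$, is also fine.

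The gap is in the transfer to the semigroups, and it is not merely a technicality you postpone: the claim that for finite initial configurations ``the generator identity yields the semigroup identity directly'' is unjustified and, as stated, misleading. Even from a finitely supported $\xi_0$ the CPLI immediately involves all of $V$ (every active site jumps to dormancy level $1$ at rate $\death(1)>0$), and the internal rates $\birth,\death$ are unbounded, so you are outside the scope of the standard bounded-rate duality theorems; moreover $H$ is not continuous on $\overline{\N}_0^V$, so differentiating $s\mapsto\IE[H(\bfeta_s,\bfxi_{t-s})]$ and interchanging generator and expectation requires a domain/approximation argument you do not supply. The paper itself flags this difficulty: the remark following Theorem~\ref{thm:ExistenceAndFeller} notes that even identifying the generator of the CPVL does not follow from standard results and would require a Holley-type truncation argument, which is precisely why the duality is proved pathwise there. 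To close your argument you would need either a finite-volume truncation with locality estimates in the spirit of Proposition~\ref{prop:BoundOnPotentialInfluence} controlling the passage $N\to\infty$ in the duality identity, or to adopt the paper's coupling argument. A smaller point: your final monotone-approximation step cannot approximate a general $\xi_0\in\N_0^V$ from above by finitely supported configurations; you would instead have to use configurations equal to $\xi_0$ on $V_n$ and $+\infty$ off $V_n$, or again argue via locality of the graphical construction.
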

This model is again monotone with respect to the initial configuration, i.e.\ if $\xi\leq \xi'$, then $\bfxi^{\xi}_t\leq \bfxi^{\xi'}_t$ for all $t\geq 0$. Moreover, the function $\lambda\mapsto \lim_{t\to \infty} \IP_{\lambda}(\bfxi_t^{\szero}(x)>0)$ is non-increasing in $\lambda$. Therefore, with the duality relation in Proposition~\ref{prop:Duality} it follows that   
\begin{equation*}
	\lambda_{c}=\inf\{\lambda>0: \lim_{t\to \infty}\IP_{\lambda}(\bfxi_t^{\szero}(x)>0)<1\},
\end{equation*}
i.e.\ the critical value $\lambda_c$ could also have a meaning for this model. 
To identify this meaning is our next goal.


The space $\overline{\N}_0^V$ equipped with the product topology is compact, and thus it follows by standard arguments that for every $\lambda>0$ there exists a measure $\overline{\mu}_{\lambda}$ such that
\begin{equation*}
	\bfxi_t^{\szero}\Rightarrow \overline{\mu}_{\lambda}     \quad \text{ as } t\to \infty.
\end{equation*}
We will now see that $\lambda_c$ in fact indicates the phase transition that for $\lambda>\lambda_c$ all the mass of $\overline{\mu}_{\lambda}$ concentrates on $\N_0^V$ versus for $\lambda<\lambda_c$ the limit measure $\overline{\mu}_{\lambda}$ totally degenerates in the sense that the whole mass is on the all $\infty$ configuration.
\begin{theorem}\label{thm:PhasetrastionCPLI}
	If $\lambda>\lambda_c$, then $\overline{\mu}_{\lambda}(\N_0^V)=1$. On the other hand, if $\lambda<\lambda_c$, then $\overline{\mu}_{\lambda}(\xi: \exists y\in V \text{ s.t. } \xi(y)<\infty)=0$.
\end{theorem}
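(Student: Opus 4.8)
The plan is to deduce Theorem~\ref{thm:PhasetrastionCPLI} from the Siegmund duality in Proposition~\ref{prop:Duality} together with the $0$--$1$ structure of the upper invariant law of the CPVL. Throughout, recall that by Proposition~\ref{prop:Duality} we have, for every finite $\eta_0 \in \N_0^V$ and all $t\geq 0$,
\begin{equation*}
    \IP(\bfxi^{\szero}_t \geq \eta_0) = \IP(\szero \geq \bfeta^{\eta_0}_t) = \IP(\bfeta^{\eta_0}_t = \szero),
\end{equation*}
since $\szero \geq \bfeta^{\eta_0}_t$ forces $\bfeta^{\eta_0}_t = \szero$. Letting $t\to\infty$, the left side converges to $\overline{\mu}_\lambda(\{\xi : \xi \geq \eta_0\})$ (this set is closed in the product topology, and a small continuity/monotonicity argument upgrades this to genuine convergence), while the right side converges to $\IP(\exists t\geq 0 : |\bfeta^{\eta_0}_t| = 0)$, the extinction probability of the CPVL started from $\eta_0$. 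So the whole theorem reduces to understanding, for each fixed finite $\eta_0$, whether the CPVL started from $\eta_0$ dies out a.s.\ or not, as a function of whether $\lambda < \lambda_c$ or $\lambda > \lambda_c$.

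\textbf{The supercritical regime} $\lambda > \lambda_c$. Here I want to show $\overline{\mu}_\lambda(\N_0^V) = 1$; equivalently, since $\overline{\mu}_\lambda$ is supported on $\overline{\N}_0^V$, it suffices to show $\overline{\mu}_\lambda(\{\xi : \xi(x) = \infty\}) = 0$ for each fixed $x$, and then use countability of $V$. By monotonicity of $\bfxi$ in the initial configuration, the configuration ``$\infty$ at $x$, $0$ elsewhere'' dominates $\szero$ in the relevant sense; more directly, $\{\xi : \xi(x) \geq n\}$ is closed, so $\overline{\mu}_\lambda(\{\xi: \xi(x)\geq n\}) = \lim_t \IP(\bfxi^{\szero}_t(x) \geq n)$, and by the duality identity above (with $\eta_0 = n\delta_x$) this equals $\IP(\exists t : |\bfeta^{n\delta_x}_t| = 0)$, the extinction probability of the CPVL from $n$ copies of virus at $x$. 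For $\lambda > \lambda_c$ the CPVL survives with positive probability from any finite nonzero start, by definition of $\lambda_c$ and Proposition~\ref{prop:CriticalValueIndependentOfStart}; hence $\IP(\exists t : |\bfeta^{n\delta_x}_t| = 0) < 1$, uniformly in $n$ in fact is not needed — we just need the limit as $n\to\infty$. Sending $n\to\infty$, $\overline{\mu}_\lambda(\{\xi : \xi(x) = \infty\}) = \lim_n \IP(\exists t : |\bfeta^{n\delta_x}_t| = 0) \leq \limsup_n (1 - c) < 1$ — but this only gives $< 1$, not $0$. To get $0$, I instead take $\eta_0 = n\delta_{B}$ for $B$ a large ball and let $B \uparrow V$ along with $n\to\infty$: survival probability of the CPVL from a configuration that is large on a large set tends to $1$ (this uses Proposition~\ref{prop:CriticalValueIndependentOfStart} plus a restart/monotonicity argument showing the survival probability from $\delta_A$ tends to $1$ as $A \uparrow V$ once $\lambda > \lambda_c$), so the extinction probability tends to $0$, giving $\overline{\mu}_\lambda(\{\xi : \exists x,\ \xi(x) = \infty\}) = 0$.

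\textbf{The subcritical regime} $\lambda < \lambda_c$. Here I must show $\overline{\mu}_\lambda(\{\xi : \exists y,\ \xi(y) < \infty\}) = 0$, i.e.\ $\overline{\mu}_\lambda$ concentrates on the all-$\infty$ configuration. It suffices to show $\overline{\mu}_\lambda(\{\xi : \xi(y) < \infty\}) = 0$ for each fixed $y$, and since $\{\xi : \xi(y) \leq n\} \uparrow \{\xi: \xi(y) < \infty\}$ it suffices to show $\overline{\mu}_\lambda(\{\xi : \xi(y) \leq n\}) = 0$ for each $n$. Now $\{\xi : \xi(y) \leq n\}$ is open (and its complement $\{\xi(y) \geq n+1\}$ closed), so $\overline{\mu}_\lambda(\{\xi(y) \geq n+1\}) \geq \limsup_t \IP(\bfxi^{\szero}_t(y) \geq n+1) = \IP(\exists t : |\bfeta^{(n+1)\delta_y}_t| = 0)$ by the duality identity; and since $\lambda < \lambda_c$, the CPVL from any finite nonzero start dies out a.s., so this probability equals $1$. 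Hence $\overline{\mu}_\lambda(\{\xi(y) \geq n+1\}) = 1$ for every $n$, i.e.\ $\overline{\mu}_\lambda(\{\xi(y) < \infty\}) = 0$, and a union bound over $y \in V$ finishes it. \textbf{The main obstacle} is the supercritical half: turning ``positive survival probability from any finite start'' into ``extinction probability $\to 0$ along an exhausting sequence of finite starts,'' which requires a genuine argument (not just monotonicity in $\lambda$) — the standard route is a restart argument showing that for $\lambda > \lambda_c$ the survival probability from $\delta_A$ converges to $1$ as $A \uparrow V$, which in turn leans on Proposition~\ref{prop:CriticalValueIndependentOfStart} and the graphical/coupling construction from Section~\ref{sec:Construction}. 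I should also be careful to justify that $\IP(\bfxi^{\szero}_t \geq \eta_0) \to \overline{\mu}_\lambda(\{\xi \geq \eta_0\})$ genuinely converges (weak convergence only gives $\limsup \leq$ for closed sets and $\liminf \geq$ for open sets in general); here monotonicity of $t \mapsto \bfxi^{\szero}_t$ (started from the minimal configuration $\szero$, the law is stochastically increasing in $t$) rescues us, giving an honest monotone limit for the indicator of the increasing event $\{\xi \geq \eta_0\}$.
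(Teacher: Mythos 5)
Your reduction via Siegmund duality is the same as the paper's, and your subcritical half is correct and essentially identical to the paper's argument: for $\lambda<\lambda_c$ the CPVL dies out a.s.\ from every finite nonzero start (definition of $\lambda_c$ plus Proposition~\ref{prop:CriticalValueIndependentOfStart} and monotonicity in $\lambda$), so $\overline{\mu}(\xi:\xi(y)\geq n)=\lim_t\IP(\bfxi^{\szero}_t\geq n\delta_y)=\IP(\exists t:|\bfeta^{n\delta_y}_t|=0)=1$ for all $n$, and a countable union over $y$ finishes it. (Your worry about weak convergence is easily dispatched: $\{\xi\geq\eta_0\}$ for finite $\eta_0$ is clopen in $\overline{\N}_0^V$, and in any case $t\mapsto\IP(\bfeta^{\eta_0}_t=\szero)$ is nondecreasing because $\szero$ is absorbing.)

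The supercritical half, however, has a genuine gap. You correctly identify that what is needed is $\overline{\mu}(\xi:\xi(x)=\infty)=\lim_{n}\IP(\exists t:|\bfeta^{n\delta_x}_t|=0)=0$, i.e.\ that the survival probability from $n\delta_x$ tends to $1$ as $n\to\infty$, and you correctly observe that ``positive survival probability'' alone only gives a bound $<1$. But your substitute --- taking $\eta_0=n\delta_B$ with $B\uparrow V$ --- does not repair this: duality then controls $\overline{\mu}(\xi\geq n\delta_B)$, and as $B\uparrow V$, $n\to\infty$ these events \emph{decrease} to $\{\xi\equiv\infty\}$, so you only conclude that the all-infinite configuration has $\overline{\mu}$-mass $0$, which is strictly weaker than $\overline{\mu}(\N_0^V)=1$; the event $\{\exists x:\xi(x)=\infty\}$ is not contained in any $\{\xi\geq n\delta_B\}$. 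The configurations $n\delta_x$ and $\delta_B$ are not comparable, so the statement ``survival from $\delta_B\to1$ as $B\uparrow V$'' (the paper's Lemma~\ref{lem:AsymtoticSurvivalAS}, proved via Birkhoff's ergodic theorem along an infinite automorphism orbit) does not by itself yield the single-site, high-load statement. The missing bridge is exactly the paper's Lemma~\ref{lem:AysmtoticSurvivalAtOneSite}: since $\death(n)\in O(n)$, the time $T_0^N$ for a site started at load $N$ to reach $0$ diverges as $N\to\infty$, and a restart argument at integer times shows that before $T_0^N$ the process climbs above a fixed finite configuration $\eta_\varepsilon$ (whose survival probability exceeds $1-\varepsilon/3$ by Lemma~\ref{lem:AsymtoticSurvivalAS}) with probability at least $1-\IE[(1-\varepsilon')^{\lfloor T_0^N\rfloor}]\to1$; combining with the strong Markov property gives survival from $N\delta_x$ with probability tending to $1$. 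Without this (or an equivalent) argument, your supercritical conclusion does not follow.
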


\begin{remark}
	One could also consider the case $|\supp(b)|<\infty$, that is, that there is a maximal capacity $K\in \N$ for the dormancy state. In this case, it is easy to see that the configuration $\xi_K(x)=K$ for all $x\in V$ is an absorbing state for the process $(\bfxi_t)_{t\geq 0}$. This would be a model for infections with a delayed recovery, where there is a chance of a relapse before an individual is fully healed.
	\begin{enumerate}
		\item Here, it becomes apparent that the duality relation in Proposition~\ref{prop:Duality} is a generalisation of the self-duality of the classical contact process. This can be seen by choosing $K=0$, i.e.\ $\death(1)=1$, $\birth(n)=0$ and $\death(n+1)=0$ for all $n\geq 1$, since for this choice $\bfeta$ as well as $(1-\bfxi_t)_{t\geq 0}$ are contact processes.
		\item A similar result to Theorem~\ref{thm:PhasetrastionCPLI} holds, which states that if $\lambda>\lambda_c$, then $\overline{\mu}(\xi: \exists y\in V \text{ s.t. } \xi(y)<K)>0$, and if $\lambda<\lambda_c$, then $\overline{\mu}(\xi: \xi=\xi_K )=1$.
	\end{enumerate}
\end{remark}

\textbf{Outline of the article.} The remainder of the article is structured as follows. In Section~\ref{sec:Discussion}, we put our models in the context of existing literature and point out connections, differences, and some further research directions. In Section~\ref{sec:Construction}, we provide an explicit Poisson construction for both process and use it to show some basic properties such as monotonicity of the systems. This is followed by Section~\ref{sec:SurvivalCPVL}, where we show Proposition~\ref{prop:CriticalValueIndependentOfStart} to Theorem~\ref{thm:InvariantDistributionCPVL} and also the application to our example Corollary~\ref{Cor:SurvivalForSpecialCase}. Section~\ref{Sec:PhaseTransitionCPLI} is devoted to the proof of Theorem~\ref{thm:PhasetrastionCPLI}.
\section{Discussion}\label{sec:Discussion}
We have introduced two multi-type contact processes, where the first process, the CPVL, models a population of individuals, each of which has an internal viral load that determines the infectivity and the ability to recover of an individual. On the other hand, CPLI describes a population where all individuals are infected and can never truly recover, but there are only occasional outbreaks of the infection. 

There is a large body of literature of various variants of multi-type contact processes with state-dependent rates, which study the impact on the survival probability or other aspects of the process. This research direction has been ongoing for quite some time and is to this day still fairly active. See, for example, \cite{krone1999stage}, \cite{blath2023switching} and \cite{belhadji2024aysmptomatic}. One variant which bears resemblance to our processes, is the contact process with ageing introduced by Deshayes~\cite{deshayes2014contact}. In that paper the author studied the asymptotic limiting shape of the infection region. In this variant, when a healthy individual is infected, it is assigned an age of $1$, which increases by one at a constant rate. Then, individuals with age $i$ infect their neighbours with rate $\lambda_i$ and they recover with rate $1$, where $(\lambda_i)_{i\geq 0}$ is assumed to be uniformly bounded.  Since the $\lambda_i$ are uniformly bounded, unlike in our models, both recovery and infection times will always have exponential tails. It would be interesting to compare the CPVL with a contact process with ageing, where infections and recoveries are age-dependent with respect to unbounded sequences $(\lambda_i)_{i\geq 0}$ and $(r_i)_{i\geq 0}$.

As we highlighted previously, the CPVL allows for modelling recovery times whose distribution includes not only exponential tails but also power-law distributions with an exponent $a$. One approach to incorporate non-exponential recovery times is by introducing a suitable static random environment affecting the recovery rates. See, for example, Bramson, Durrett and Schonmann~\cite{Bramson1991enviroment} or Newman and Volchan~\cite{Newman1996persistent}. 

This approach results in a spatially heterogeneous situation, where long recovery times of individuals are an inherent trait given through the random environment. A model choices that leads to a homogenous situation, would be a generalised contact process, in which we assign a recovery time $\tau_{\text{rec}}$ to an individual upon infection, distributed according to some distribution $\mu$. Note that if $\mu$ is an exponential distribution, this yields the classic contact process. As the attentive reader might have noticed, if we neglect the state dependency of the infection rate, i.e.\ set $\Lambda\equiv\lambda$, then the projection of the CPVL $(\1_{\{\bfeta_t> 0\}})_{t\geq 0}$ becomes such a generalised contact process, where $\tau_{\text{rec}}$ is chosen as the extinction time of a BD-process. Hence, one way of interpreting the CPVL is that it is a Markovianisation of a subclass of these generalised contact processes, which we obtain by extending the state space. A similar idea was also used in the context of population genetics models with dormancy in the work of Grevens, den Hollander and Ooman \cite{greven2022spatial}, who constructed a model for spatial populations with multi-layered seed banks.

Another way to incorporate this was introduced by Fontes et al.~\cite{fontes2019renewalI}. They introduced a so-called renewal contact process, which is constructed through a generalisation of the graphical representation used to define the classical contact process. In this framework, the recovery events are modelled by a family of independent renewal processes, with the time increments distributed according to some law $\mu$ instead of Poisson processes.
In a follow-up article \cite{fontes2023renewal} the critical infection rate for survival $\lambda_c(a)$ was studied in the case $\mu[t,\infty) \sim t^{-a}$, i.e.\ $\mu$ being a power-law distribution with exponent $a$. It was shown that $\lambda_c(a)>0$ if $a>1$ and $\lambda_c(a)=0$ if $a<1$. Remarkably, for $a<1$, the renewal contact process has a positive probability of survival even on finite graphs (see \cite{fontes2021finite}).

Obviously, a major difference of the CPVL compared to the renewal contact process is that the latter is, in general, no longer a Markov process. Thus, to further analyse the CPVL one can use the rich theory of Markov processes to gain more insights. However, beyond that, there are some more significant differences. Let $\birth,\death$ be chosen as in Example~\ref{ex:BirthDeathProcess} (so that $\tau_{\text{rec}}$ is distributed according to a power law with exponent $a>0$). For the CPVL, the recovery times are directly coupled to the infection times, such that the time until recovery follows a power-law distribution with exponent $a$. However, for the renewal contact process, the distribution of the increments between consecutive potential recoveries is given by the renewal process. This does not necessarily lead to the same result as if an individual recovers after a time period distributed according to $\mu$ following an infection. The distinction is especially clear for $a<1$, since in \cite[Proposition~7]{fontes2019renewalI} it is shown that there exists some $\varepsilon>0$ so that for large $n$ time intervals of length up to $2^{\varepsilon n}$ contain one or more recovery events with a probability lower than $2^{-\varepsilon n}$.

This is exploited in the proof strategy to show that $\lambda_c(a)=0$ in \cite{fontes2019renewalI}.  One could say that the typical strategy is used to show absence of a subcritical regime, which is to find a finite substructure that can keep the infection alive for an exponential length of time. This provides the infection with enough time to find the next substructure in a different part of the graph. Another example in which this is used is for the standard contact process on Galton-Watson trees with a heavy-tailed offspring distribution. Here a so-called star, a vertex with an exceptional high number of offspring, takes on this role; see Huang and Durrett \cite{huang2020contact} for more details.

For the CPVL, this is not necessarily the case. If we again consider $\birth,\death$ chosen as in Example~\ref{ex:BirthDeathProcess} with $a\in(0,1)$ and assume that an individual has viral load $n$, then the recovery time is typically only of polynomial order in $n$, which is significantly less. Thus, it is not clear whether $\IE[\tau_{\text{rec}}]=\infty$ implies that $\IP(|\bfeta^{\zero}_t|>0 \,\forall\, t\geq 0)>0$ for any $\Lambda\not\equiv 0$. However, it seems plausible that recovery times are typically long enough that this holds true.

Since the Markov structure is still present, we can use it to study the long-time behaviour of the CPVL which is described by the invariant laws of the system. We would like to further study the properties of these laws, for example, for $a\in(0,1)$ it is not clear if a non-trivial upper invariant law concentrating on $\N_0^V$ exists or not. In general, it would be desirable to study the connection between survival probability and the upper invariant law. Since there is no self-duality relation, it is not clear how to do this. Beyond the framework, considered in this article, one could model the viral load with a more general branching structure, which allows not only binary branch events, or consider a viral load with values in $\IR_+$, which evolves according to a diffusion process that is absorbing in $0$. 

The CPLI is interesting not only because of the dual relation to the CPVL, but also on its own. For example, it can also be interpreted as a reactive dormancy model in a spatially structured population, where dormant individuals turn active again on their own or due to an active neighbour which activates them. Here again $\birth,\death$ chosen as in Example~\ref{ex:BirthDeathProcess} are of particular interest. If there is no interaction between neighbours, then it is clear that time an individual stays dormant follows a power-law distribution. Is this preserved if an interaction is introduced? How does the distribution of the reactivation times of dormant particles depend on $\lambda$?
\section{Construction of the processes}\label{sec:Construction}
In this section we present an explicit Poisson constructions of both processes the CPVL and the CPLI. This construction is a related to the random mapping representation in \cite{swart2022course}. However, there are some significant differences, since we consider a non-compact state space and the rate functions of our processes are not bounded.

We introduce the notion of oriented edges. For every $x,y \in V$ with $\{x,y\}$ we denote by $(x,y)$ the oriented edge that points from $x$ to $y$ and vice versa $(y,x)$ points from $y$ to $x$. We denote by $\Vec{E}:=\{(x,y):\{x,y\}\in E\}$ the set of all oriented edges.

Let $\II$ be a standard Poisson point process (PPP) on $\Vec{E}\times [0,\infty) \times \IR$, i.e.\ the intensity measure is the Lebesgue measure $dt$, and $\IU$ and $\ID$ are two independent standard PPP on $V\times [0,\infty)\times \IR$, which are independent of $\II$.

For technical reasons, we first need to construct the processes on a finite truncation. Thus, before we start, let $(V_N)_{N\in \N}$ be an increasing sequence of finite subsets of $V$, which approximates $V$. This means that $|V_N|<\infty$ and $V_N\nearrow V$ as $N\to \infty$, i.e.\ $V_N\subset V_{N+1}\subset V$ for all $N\geq 0 $ and $\bigcup_{N=1}^{\infty}V_N=V$. Then, we define the truncated edge set $E_N:=\{\{x,y\}\in E: x,y\in V_N\}$, and  $\Vec{E}_N$ is analogously defined. In the following we consider truncations of previously defined point processes, which we indicate by a superscript $N$, for example $\II^{N}=(\II^{N}_{(x,y)})_{(x,y)\in \Vec{E}_N}$.

Now we will construct for an arbitrary initial configuration $\eta\in \N_0^{V}$ and an arbitrary starting time $s\in\IR$ the truncated process $(\bfX^N_{s,t}[\eta])_{t\in[s,\infty)}$, i.e.\ $\bfX^N_{s,s}[\eta](x)=\eta(x)$ for all $x\in V_N$. Note that $\bfX^N$ takes values in the countable state space $\N^{V_N}$, which allows us to explicitly construct this Markov process via a Poisson construction and its generator is given by
\begin{equation}\label{eq:TruncatedGenerator}
	\begin{aligned}
		\cG^{N} f(\eta)=&\sum_{x\in V_N}\birth(\eta(x))(f(\eta+\delta_x)-f(\eta))+\death(\eta(x))(f(\eta-\delta_x)-f(\eta))\\
		&+\sum_{y:\{x,y\}\in E_N} \Lambda(\eta(y)) (f((\eta\vee\delta_x))-f(\eta).  
	\end{aligned}
\end{equation}
for all bounded functions $f:\N_0^{V_N}\to \IR$. Recall that we assume that the rate function $\Lambda,\birth,\death:\N_0\to \N_0$ satisfy Assumption~\ref{ass:RateAssumption}.

In the following construction we will suppress the superscript $N$ and only mention it if it is of relevance. Now fix some configuration $\eta\in \N_0^V$ as initial configuration at time $s\in \IR$. Then, we define the jump times $(T_k)_{k\geq 0}=(T^N_k)_{k\geq 0}$ as follows. We set $T_0=s$ and $\bfX^N_{s,s}[\eta]=\eta$. Assume that $T^N_{n-1}\geq s$ for some $n\geq 1$ and $(\bfX^N_{u,s}[\eta])_{u\leq T_{n-1}^{N}}$ is already defined, then we set
\begin{equation}\label{eq:PotentialJumpTimes}
	\begin{aligned}
		I_{n}^N(\eta)&:=\inf\{t\geq T_{n-1}: \exists ((y,x),(\alpha,t)) \in \II^N \text{ s.t. } \alpha<\Lambda(\bfX^N_{s,T_{n-1}}[\eta](y)) \},\\
		U^N_{n}(\eta)&:=\inf\{t\geq  T_{n-1}: \exists (x,(\alpha,t)) \in \IU^N \text{ s.t. } \alpha< \birth(\bfX^N_{s, T_{n-1}}[\eta](x)) \},\\
		D^N_{n}(\eta)&:=\inf\{t\geq  T_{n-1}: \exists (x,(\alpha,t)) \in \ID^N \text{ s.t. } \alpha< \death(\bfX^N_{s,T_{n-1}}[\eta](x))\}.
	\end{aligned}
\end{equation}
We set $T^N_{n}=T^N_{n}(\eta):=I^N_{n}(\eta)\wedge U^N_{n}(\eta) \wedge D^N_{n}(\eta)$ and denote by $\alpha_{n}\in [0,\infty)$, $x_{n}\in  V_N$ or $(y_{n},x_{n})\in \Vec{E}$ the associated points. Next, we define the process after time $T_{n-1}$ as follows. First, we set $\bfX^N_{t,s}[\eta]\equiv \bfX^N_{s,T_{n-1}}[\eta]$ for all $t\in (T_{n-1},T_{n})$, then at time $T_{n}$ we set
\begin{align*}
	\bfX^N_{s,T_{n}}(\eta,x_{n})=\begin{cases}
		\bfX^N_{s,T_{n-1}}[\eta](x_{n})\vee 1 &\text{ if }  ((y_{n},x_{n}),(\alpha_{n},T_{n}))\in \II^N \text{ and } \bfX^N_{s,T_{n-1}}[\eta](y_{n})>0
		\\
		\bfX^N_{s,T_{n-1}}[\eta](x_{n})+ 1 &\text{ if }  (x_{n},(\alpha_{n},T_{n}))\in \IU^N\\
		\bfX^N_{s,T_{n-1}}[\eta](x_{n})- 1 &\text{ if } (x_{n},(\alpha_{n},T_{n}))\in \ID^{N}\\
		\bfX^N_{s,T_{n-1}}[\eta](x_{n}) &\text{ otherwise}
	\end{cases}
\end{align*}
and $\bfX^N_{s,T_{n}}[\eta](y)=\bfX^N_{s,T_{n-1}}[\eta](y)$ for all $y\neq x_{n}$. See Figure~\ref{fig:PoissonConstruction} for a visualisation of the construction.
\begin{figure}[t]
	\centering 
	\includegraphics[width=70mm]{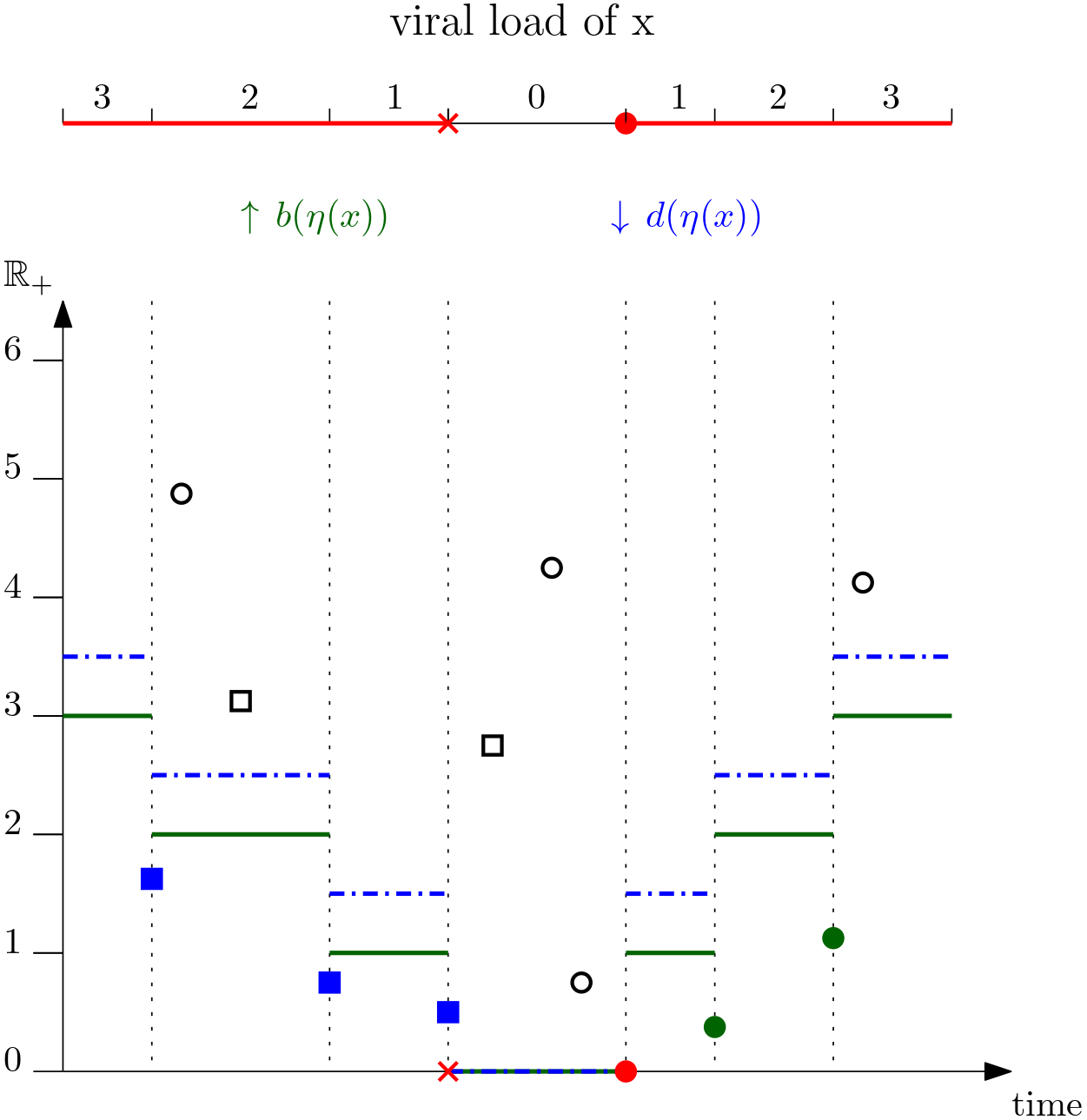}
	\caption{Visualisation of the Poisson construction of the viral load on vertex $x$ with rate function $\birth(n)=n$ and $\death(n)=n+\tfrac{1}{2}$ for all $n\geq 1$. The squares and circles signify points from $\IU$ and $\ID$, where the coloured ones are part of the construction. Furthermore, red cross indicates a recovery of $x$ and the red dot an infection from a neighbour.}
	\label{fig:PoissonConstruction}
\end{figure}
Note that it follows by this construction that $\bfX^N[\eta]\leq \bfX^M[\eta]$ for all $N\leq M$ and that 
\begin{equation}\label{ConcatinationMarkov}
	\bfX^N_{u,t}[\bfX^N_{s,u}[\eta]]=\bfX^N_{s,t}[\eta]
\end{equation}
for any $u\in [s, t]$. Now \eqref{ConcatinationMarkov} directly implies that $(\bfX^N_{s,t}[\eta])_{t\geq s}$ is a Markov process for any $\eta\in \N^V$. This construction yields a well-defined Markov process up to explosion, i.e.\ for all times $t< \inf\{u>s:|\bfX^N_{s,u}[\eta]|<\infty\}$. It only remains to show that this process does not explode in finite time. This can be obtained by a coupling with a system of independent BD-process. To formulate this coupling let us introduce some more notation
\begin{align*}
	\cI^N_{s,t}((y,x),\eta)&:= \{u \in [s,t]: \exists \alpha\in [0,\infty)\text{ s.t. } (\alpha,u) \in \II^N_{(y,x)} \text{ and } \alpha<\Lambda(\bfX^N_{s,u-}[\eta](y)) \},\\
	\cU^N_{s,t}(x,\eta)&:=\{u \in [s,t]: \exists \alpha\in [0,\infty) \text{ s.t. } (\alpha,u) \in \IU^{N}_x \text{ and } \alpha< \birth(\bfX^N_{s,u-}[\eta](x))\},\\
	\cD^N_{s,t}(x,\eta)&:=\{u \in  [s,t]: \exists \alpha\in [0,\infty) \text{ s.t. } (\alpha,u) \in \ID^{N}_x \text{ and } \alpha< \death(\bfX^N_{s,u-}[\eta](x))\},
\end{align*}
where $x,y\in V_N$ and otherwise if $x\notin V_N$ or $(y,x)\notin \vec{E}_N$ we define the corresponding set as the empty set. Next we define
\begin{equation*}
	\begin{aligned}
		\cI^{\vee}_{s,t}((y,x),\eta)&:=\bigcup_{N\geq 1} \cI^N_{s,t}((y,x),\eta),\\   \cU^{\vee}_{s,t}(x,\eta):=\bigcup_{N\geq 1}  \cU^N_{s,t}(x,\eta)\quad &\text{ and }\quad 
		\cD^{\vee}_{s,t}(x,\eta):=\bigcup_{N\geq 1}  \cD^N_{s,t}(x,\eta)
	\end{aligned}
\end{equation*}
for all $x\in V$ and all $(y,x)\in \vec{E}$.

\begin{proposition}\label{prop:CouplingWithIndBD}
	Let $s\in \IR$ and $\eta\in \N_0^V$. There exists a family of i.i.d.\ 
	BD-processes $(\bfZ_{s,t}[\eta])_{t\geq s}$ with $\bfZ_{s,s}(\eta)=\eta\vee \underline{1}$, where $\underline{1}(x)=1$ for all $x \in V$, such that 
	\begin{equation*}
		\sup_{N\geq 1}\bfX^N_{s,t}[\eta]\leq\bfZ_{s,t}[\eta] \quad \text{ for all } \quad t\geq s.
	\end{equation*}
	This means that the process $\bfZ_{s,\cdot}[\eta]$ takes values in $\N^{V}$, 
		and for any $x\in V$, if $\bfZ_{s,t}(\eta)(x)$ is in state $n$ it has transitions 
	\begin{equation*}
		\begin{aligned}
			n\to n+1 &\text{ with rate } \birth(n)\\
			n\to n-1 &\text{ with rate } \death(n)\1_{\{n\geq 2\}}.
		\end{aligned}
	\end{equation*}
\end{proposition}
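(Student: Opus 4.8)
The plan is to build the dominating family $\bfZ$ directly from the graphical data and then check the domination coordinate by coordinate, by induction over the jump times of the Poisson construction.

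\textbf{Construction of $\bfZ$.} For each $x\in V$ I would build $\bfZ_{s,\cdot}[\eta](x)$ from the marks of $\IU_x$ and $\ID_x$ alone (discarding $\II$ entirely), started at $\bfZ_{s,s}[\eta](x)=\eta(x)\vee 1$, by the usual recipe: a mark $(\alpha,u)\in\IU_x$ raises the value by $1$ when $\alpha<\birth(\bfZ_{s,u-}[\eta](x))$, and a mark $(\alpha,u)\in\ID_x$ lowers it by $1$ when $\alpha<\death(\bfZ_{s,u-}[\eta](x))\1_{\{\bfZ_{s,u-}[\eta](x)\geq 2\}}$. Since the restrictions of $\IU$ and $\ID$ to distinct sites are independent, the coordinates $(\bfZ_{s,\cdot}[\eta](x))_{x\in V}$ are independent, each evolving as the birth-death process with the rates stated in the proposition. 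Because $\birth$ is at most linear (Assumption~\ref{ass:RateAssumption}), each coordinate is stochastically dominated by the pure-birth process with rates $\birth(\cdot)$, which is non-explosive, so $\bfZ$ is well defined for all $t\geq s$. Finally, since the death clock is switched off at level $1$ and $\bfZ_{s,s}[\eta]\geq\underline 1$, one has $\bfZ_{s,t}[\eta](x)\geq 1$ for all $t\geq s$ and all $x$.

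\textbf{Domination.} Because $\bfX^N[\eta]\leq\bfX^M[\eta]$ for $N\leq M$, it suffices to prove $\bfX^N_{s,t}[\eta]\leq\bfZ_{s,t}[\eta]$ for each fixed $N$, and I would do this by induction over the successive jump times of the graphical construction on $V_N$, run up to the a priori explosion time $\zeta$ of $\bfX^N$. Let $T$ be such a time, $x$ the affected site, and assume $\bfX^N_{s,T-}[\eta]\leq\bfZ_{s,T-}[\eta]$; only the coordinate $x$ can change. If $\bfX^N_{s,T-}[\eta](x)<\bfZ_{s,T-}[\eta](x)$, then a single mark changes $\bfX^N(x)$ by at most $+1$ and $\bfZ(x)$ by at most $-1$, and no single mark does both (a mark of $\IU$ can only raise, a mark of $\ID$ can only lower, a mark of $\II$ touches only $\bfX^N$), so the strict inequality degrades to $\leq$ at worst. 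If instead $\bfX^N_{s,T-}[\eta](x)=\bfZ_{s,T-}[\eta](x)=:n$, then $n\geq 1$ by the lower bound on $\bfZ$, so an infection mark leaves $\bfX^N(x)=n\vee 1=n$ unchanged; a mark of $\IU$ compares $\alpha$ to the \emph{same} number $\birth(n)$ for both and acts identically; and a mark of $\ID$ acts identically when $n\geq 2$, while for $n=1$ it can only lower $\bfX^N(x)$. In every case $\bfX^N_{s,T}[\eta](x)\leq\bfZ_{s,T}[\eta](x)$. Taking the supremum over $N$ yields the asserted bound; in particular this supremum is finite and hence lies in $\N_0^V$.

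\textbf{Non-explosion and the main obstacle.} The bound just obtained is exactly what closes the gap left in the construction: since $\bfZ$ does not explode, $\sup_{u\in[s,t]}\bfZ_{s,u}[\eta](x)<\infty$ a.s.\ for each $t$ and $x$; as $V_N$ is finite and $\bfX^N\leq\bfZ$ on $[s,\zeta)$, the process $\bfX^N$ stays bounded by some random $M<\infty$ on $[s,t]\cap[s,\zeta)$, so only finitely many marks of $\IU^N,\ID^N,\II^N$ can drive a jump of $\bfX^N$ on $[s,t]$ (all rates being bounded on $\{0,\dots,M\}$); hence $\zeta=\infty$ a.s.\ and $\bfX^N$ is an honest Markov process. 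The one genuinely delicate point is that $\birth$ and $\death$ are not assumed monotone, so the usual ``larger configuration has larger rates'' comparison is unavailable; the argument circumvents this by comparing the two processes rate-by-rate only when they already coincide at the site in question (where the rates are literally equal, apart from the harmless death-at-level-$1$ modification), a gap of size $\geq 1$ being unbreakable by a single $\pm 1$ step. The secondary subtlety — that domination is needed to justify the very finiteness of the jump-time enumeration used in the induction — is handled by first running the induction up to $\zeta$ and only afterwards deducing $\zeta=\infty$.
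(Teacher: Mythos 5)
Your proposal is correct and follows essentially the same route as the paper: you build $\bfZ$ from the marks of $\IU$ and $\ID$ alone with the death clock suppressed at level $1$, and prove domination by induction over the potential jump times, using that a gap of size $\geq 1$ cannot be broken by a single $\pm1$ jump and that at equality the relevant rates coincide (with the infection and death-at-level-$1$ marks only able to preserve the order). Your additional care about the equality case and about deducing non-explosion of $\bfX^N$ after the induction is a welcome refinement of details the paper treats more briefly (the latter in Corollary~\ref{cor:NonExplosivAndFiniteJumps}), but it is not a different argument.
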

\begin{proof}
	To shorten the notation we set $\death'(n):=\death(n)\1_{\{n\geq 2\}}$. Define $\bfZ_{s,s}[\eta]:=\eta\vee \underline{1}$ and $T'_0:=s$. Suppose that the process is constructed until $T'_{n-1}$. Then, define
		\begin{equation}\label{eq:PotentialJumpTimesOfZ}
				\begin{aligned}
						U'_{n}(\eta)&:=\inf\{t\geq  T'_{n-1}: \exists (\alpha,t) \in \IU_x \text{ s.t. } \alpha< \birth(\bfZ_{s, T'_{n-1}}[\eta](x)) \},\\
						D'_{n}(\eta)&:=\inf\big\{t\geq  T'_{n-1}: \exists (\alpha,t) \in \ID_x \text{ s.t. } \alpha< \death'(\bfZ_{s, T'_{n-1}}[\eta](x)) \big\}.
					\end{aligned}
			\end{equation}
		Again set $T'_{n}:= U'_{n}(\eta) \wedge D'_{n}(\eta)$ 
		and denote by $(\alpha_{n},x_{n})\in [0,\infty)\times V_N$ or $(y_{n},x_{n})\in \Vec{E}$ the associated point. 
		We again set $\bfZ_{t,s}(\eta)= \bfZ_{s,T'_{n-1}}(\eta)$ for all $t\in (T'_{n-1},T'_{n})$, at time $T'_{n}$ we set
		\begin{align*}
				\bfZ_{s,T'_{n}}(\eta,x_{n})=\begin{cases}
						\bfZ_{s,T'_{n-1}}[\eta](x_{n})+1 &\text{ if }  (x_{n},(\alpha_{n},T'_{n}))\in \IU\\
						\bfZ_{s,T'_{n-1}}[\eta](x_{n})-1 &\text{ if }  (x_{n},(\alpha_{n},T'_{n}))\in \ID\\
						\bfZ_{s,T'_{n-1}}[\eta](x_{n}) &\text{ otherwise}
					\end{cases}
			\end{align*}
		and $\bfZ_{s,T'_{n}}[\eta](y)=\bfZ_{s,T'_{n-1}}[\eta](y)$ for all $y\neq x_{n}$. 
		
		Fix some $N\geq 0$ and recall that $T_n^N$ is the $n$-th potential jump time of $\bfX^N_{s,\cdot}[\eta]$, see \eqref{eq:PotentialJumpTimes}. We denote by
		\begin{equation*}
				S_k\in \{T_n: n\geq 0\}\cup \{T'_n: n\geq 0\}
			\end{equation*}
		all potential jump times of $\bfZ$ and $\bfX^N$.
		
		Now we show that if $\bfX^N_{s,S_{n-1}}[\eta]\leq \bfZ_{s,S_{n-1}}[\eta]$, then $\bfX^N_{s,S_{n}}[\eta]\leq \bfZ_{s,S_{n}}[\eta]$, i.e.\ at any potential jump time $S_n$ the order is preserved. For that, let $(\alpha_{n},x_{n})\in [0,\infty)\times V$ be the unique associated point to $S_n$. If $\bfX^N_{s,S_{n-1}}[\eta](x_n)=\bfZ_{s,S_{n-1}}[\eta](x_n)$, then by definition of the potential jump times \eqref{eq:PotentialJumpTimes} and \eqref{eq:PotentialJumpTimesOfZ} both processes jump up or down by $1$ jointly. On the other hand if $\bfX^N_{s,S_{n-1}}[\eta]<\bfZ_{s,S_{n-1}}[\eta]$, then the jump at time $S_n$ must preserve $\bfX^N_{s,S_{n-1}}[\eta]\leq \bfZ_{s,S_{n-1}}[\eta]$, since the maximal jump height at location $x_n$ is $1$.
		
		Since $N$ was arbitrarily chosen and $\eta\leq \bfZ_{s,s}[\eta]$ by construction, it follows that 
		\begin{equation*}
				\sup_{N\geq 1}\bfX^N_{s,t}[\eta]\leq\bfZ_{s,t}[\eta] \quad \text{ for all } \quad t\geq s.
			\end{equation*}
		Furthermore, it is easy to see that $\bfZ_{s,\cdot}[\eta](x)$ and $\bfZ_{s,\cdot}[\eta](y)$ are independent for all $x\neq y$ and every $\bfZ_{s,t}[\eta](x)$ is a BD-process on $\N$ with the claimed transitions.
\end{proof}
\begin{corollary}\label{cor:NonExplosivAndFiniteJumps}
	Suppose $\eta\in \N_0^V$ and $x\in V$ and $(x,y)\in \vec{E}$. 
	\begin{itemize}
		\item[(i)] For any $N\in \N$ the process $\big(\bfX^N_{s,t}[\eta]\big)_{t\geq s}$ does not explode in finite time, i.e.\ $|\bfX^N_{s,t}[\eta]|<\infty$ for all $t\geq s$ almost surely.
		\item[(ii)] The random sets $\cI^{\vee}_{s,t}((x,y),\eta')$, $\cU^{\vee}_{s,t}(x,\eta')$ and $\cD^{\vee}_{s,t}(x,\eta')$ are almost surely finite for all $t\geq s$ and all $\eta'\in \N_0^V$ with $\eta(x)=\eta'(x)$.
	\end{itemize}
\end{corollary}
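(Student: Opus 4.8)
The plan is to derive both statements from the domination in Proposition~\ref{prop:CouplingWithIndBD}, combined with the elementary fact that a Poisson point process places only finitely many points in any bounded Borel set. As a preliminary step I would record that, by Proposition~\ref{prop:CouplingWithIndBD}, for each $x\in V$ the coordinate $\big(\bfZ_{s,t}[\eta](x)\big)_{t\geq s}$ is a Birth--Death process on $\{1,2,\dots\}$ with birth rate $\birth(\,\cdot\,)$ and death rate $\death(\,\cdot\,)\1_{\{\,\cdot\,\geq 2\}}$, and that this process is non-explosive: since $\birth,\death\in O(n)$ by Assumption~\ref{ass:RateAssumption}, either $\supp(\birth)$ is finite, in which case the path is confined to a bounded set forever, or $\supp(\birth)=\N$ and $\sum_{n\geq 1}\birth(n)^{-1}=\infty$, which rules out explosion via the only (upward) mechanism. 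Hence for every $t\geq s$ the running maximum $M_x(t):=\sup_{u\in[s,t]}\bfZ_{s,u}[\eta](x)$ is a.s.\ finite, and the path makes only finitely many jumps on $[s,t]$. I would also note that, because the coordinates of $\bfZ$ are independent and the $x$-th one is driven solely by $\IU_x,\ID_x$, the realisation of $\bfZ_{s,\cdot}[\eta](x)$ depends on the initial datum only through $\eta(x)$; this is precisely what makes the bounds below uniform over all $\eta'$ with $\eta'(x)=\eta(x)$.

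For part~(ii) I would combine $\bfX^N_{s,u-}[\eta']\leq\bfZ_{s,u-}[\eta']$ with $\bfZ_{s,u-}[\eta'](x)=\bfZ_{s,u-}[\eta](x)\leq M_x(t)$ for $u\in[s,t]$ and all $N$. Since $\Lambda$ is non-decreasing this yields $\Lambda(\bfX^N_{s,u-}[\eta'](x))\leq\Lambda(M_x(t))$; for $\birth,\death$, which are not assumed monotone, I would instead pass to the running maxima $\bar{\birth}(m):=\max_{0\leq k\leq m}\birth(k)$ and $\bar{\death}(m):=\max_{0\leq k\leq m}\death(k)$ (finite, non-decreasing) and bound $\birth(\bfX^N_{s,u-}[\eta'](x))\leq\bar{\birth}(M_x(t))$, likewise for $\death$. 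Plugging these into the definitions exhibits $\cI^{\vee}_{s,t}((x,y),\eta')$, $\cU^{\vee}_{s,t}(x,\eta')$, $\cD^{\vee}_{s,t}(x,\eta')$ as contained in the set of time-coordinates of the points of $\II_{(x,y)}$, $\IU_x$, respectively $\ID_x$, lying in the bounded box $[s,t]\times[0,c)$ for a finite (random) $c$; on the a.s.\ event $\{M_x(t)<\infty\}$ such a set is finite. This proves~(ii), uniformly over all $\eta'$ with $\eta'(x)=\eta(x)$.

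Part~(i) then follows quickly. Fix $N$; every jump time of $\bfX^N_{s,\cdot}[\eta]$ in $[s,t]$ lies in the union, over the finitely many edges of $\vec E_N$ and vertices of $V_N$, of the sets $\cI^N_{s,t}(\,\cdot\,,\eta)$, $\cU^N_{s,t}(\,\cdot\,,\eta)$, $\cD^N_{s,t}(\,\cdot\,,\eta)$, each of which is contained in the corresponding $\cI^{\vee},\cU^{\vee},\cD^{\vee}$ from part~(ii) applied with $\eta'=\eta$. Hence $\bfX^N_{s,\cdot}[\eta]$ makes a.s.\ only finitely many jumps on every bounded interval, so the construction does not explode in finite time, and $|\bfX^N_{s,t}[\eta]|<\infty$ for all $t\geq s$ a.s.\ (the latter being in any case immediate from $|\bfX^N_{s,t}[\eta]|\leq\sum_{x\in V_N}\bfZ_{s,t}[\eta](x)$).

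The only genuinely non-routine point is the very first step: extracting from the at-most-linear growth of $\birth,\death$ in Assumption~\ref{ass:RateAssumption} that the dominating Birth--Death coordinate stays finite, and in fact locally bounded, on compact time sets — this is where a non-explosion criterion for Birth--Death chains is needed. Everything afterwards is bookkeeping with the explicit graphical construction together with the finiteness of Poisson points in bounded boxes, the one minor subtlety being that $\birth$ and $\death$ are not assumed monotone, which is handled by the running-maximum truncations $\bar{\birth},\bar{\death}$.
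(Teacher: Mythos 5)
Your proposal is correct, and its skeleton is the same as the paper's: both parts are reduced to the domination by the independent BD system $\bfZ$ from Proposition~\ref{prop:CouplingWithIndBD} together with the fact that a Birth--Death coordinate with rates in $O(n)$ does not explode, and the uniformity in $\eta'$ with $\eta'(x)=\eta(x)$ comes in both cases from the observation that $\bfZ_{s,\cdot}[\eta'](x)$ is driven only by $\IU_x,\ID_x$ and its initial value $\eta(x)\vee 1$. Where you diverge is in how part~(ii) is finished. The paper argues via first moments: for the infection times it bounds $\IE\big[|\overline{\cI}_{s,t}|\big]=\int_s^t\IE[\Lambda(\bfZ_{s,u}[\eta](x))]\,du<\infty$, using the monotonicity of $\Lambda$ and the integrability assumption $\IE[\Lambda(X_t)]<\infty$, and for the up/down jump counts it dominates $|\cU^{\vee}|,|\cD^{\vee}|$ by a pure birth process $Y^x$ with linear rates built from $\IU_x\cup\ID_x$, whose mean is finite. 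You instead argue pathwise: the running maximum $M_x(t)$ of the dominating BD coordinate is a.s.\ finite, so all relevant Poisson points lie in a bounded (random) box $[s,t]\times[0,c)$ and local finiteness of the PPP does the rest, with the running-maximum envelopes $\bar\birth,\bar\death$ correctly compensating for the fact that $\birth,\death$ need not be monotone (a point the paper sidesteps by counting events rather than bounding rates by a monotone function). Your route is marginally more elementary in that it only uses a.s.\ finiteness of $M_x(t)$ and does not invoke $\IE[\Lambda(X_t)]<\infty$ at this stage; the paper's moment bounds are less pathwise but yield the quantitative statement that the expected number of events is finite, which a purely almost-sure argument does not give. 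Your additional remark that (i) also requires non-accumulation of the jump times $T_n^N$, obtained from the finiteness of the jump-time sets over the finitely many vertices and edges of $V_N$, is a harmless (and arguably clarifying) strengthening of the paper's one-line deduction of (i) from the coordinatewise domination.
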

\begin{proof}
	First note that $(i)$ is a direct consequence of Proposition~\ref{prop:CouplingWithIndBD}, since it is assumed that $\birth\in O(n)$ as $n\to\infty$. It is well known that BD-processes with such an assumption imposed on the birth rate function $\birth(\cdot)$ do not explode in finite time, see \cite[Theorem~6.8(19)]{grimmett2020probability}.
	
	Next we will show $(ii)$. Let is first consider $\cI_{s,t}((x,y),\eta)$. For that set
	\begin{equation*}
		\overline{\cI}_{s,t}((x,y),\eta):=\{u \in [s,t]: \exists \alpha\in [0,\infty)\text{ s.t. }  (\alpha,u) \in \II^N_{(y,x)} \text{ and } \alpha<\Lambda(\bfZ_{s,u}[\eta](y)) \}.
	\end{equation*}
	Since $\Lambda:\N_0\to \N_0$ is a monotone function Proposition~\ref{prop:CouplingWithIndBD} implies that 
	\begin{equation*}
		\cI^{\vee}_{s,t}((x,y),\eta)\subset\overline{\cI}_{s,t}((x,y),\eta).
	\end{equation*}
	Since $\II$ is a PPP and by assumption on the function $\Lambda(\cdot)$ it follows that
	\begin{equation*}
		\IE\big[|\overline{\cI}_{s,t}((x,y),\eta)|\big]=\int_{0}^{t}\IE[\Lambda(\bfZ_{s,u}[\eta](x))]ds<\infty
	\end{equation*}
	for every $t\geq s$, which implies that $\cI_{s,t}((x,y),\eta)$ is almost surely finite for every $t\geq s$. 
	
	By assumption, there exists an $C>0$ such that $\birth(n),\death(n)\leq Cn$ for all $n\geq 0$. Thus, we can use $\ID\cup\IU$ to construct a pure birth process $Y^x=(Y^x_t)_{t\geq s}$ with the rate function $\birth+\death$ such that
	\begin{equation*}
		\max\{|\cU^{\vee}_{s,t}(x,\eta)|,|\cD^{\vee}_{s,t}(x,\eta)|\}\leq Y^x_t
	\end{equation*}
	for all $t\geq s$. We know that $\IE[Y_t^x]<\infty$ for all $t\geq s$, since $Y^x$ is a pure birth process with linear rates, and thus $|\cU_{s,t}(x,\eta)|$ and $|\cD_{s,t}(x,\eta)|$ are finite almost surely for all $t\geq s$. Note that the various upper bounds all only depend on the initial value $\eta(x)$, which provides the claim.
\end{proof}
Next, we extend the Poisson construction to infinite graphs by showing that the limit 
\begin{equation*}
	\bfX_{s,t}[\eta]=\lim_{N\to \infty}\bfX^N_{s,t}[\eta]
\end{equation*}
exists for every initial configuration $\eta\in \N^V$. Then we set $\bfeta^{\eta}_t:=\bfX_{0,t}(\eta)$ and 
we show that this process $\bfeta$ is a Feller process. 

Before we proceed, we first need to study the waiting time $T$ of the first outgoing infection from a vertex $x$ to one particular neighbour $y$ right after it got infected, i.e.\ its viral load is $1$.
Recall that $X=(X_t)_{t\geq 0}$ is a BD-process with rate functions $\birth, \death$ and initial value $X_0=1$. The distribution of $T$ is given through
\begin{equation}\label{eq:DistributionOfInfectionTime}
	\IP(T> t)=\IE\bigg[\exp\bigg(-\int_{0}^{t}\Lambda(X_s)ds\bigg)\bigg] \qquad \text{for all} \quad t\geq 0.
\end{equation}
\begin{lemma}\label{eq:ControlOverSum}
	Let $T_1,\dots, T_n$ be i.i.d.\ and distributed as in \eqref{eq:DistributionOfInfectionTime}. Then, for any $\kappa>0$ there exists an $C=C(\kappa)>0$ such that $\IP(S_n\leq C n)\leq \exp(-\kappa n)$,
	where $S_n:=\sum_{i=1}^{n}T_i$.
\end{lemma}
\begin{proof}
	Let $c>0$, then It follows
	\begin{equation*}
		\IE[e^{-cT}|X]=\int_{0}^{\infty}\Lambda(X_t)\exp\bigg(-\int_{0}^{t}(\Lambda(X_s)+c)ds\bigg)dt=1-c\IE[T_c|X],
	\end{equation*}
	where $T_c$ is distributed as in \eqref{eq:DistributionOfInfectionTime} just with respect to the function $\Lambda(X_t)+c$. Thus, we have that
	\begin{equation*}
		\IE[e^{-cT}]=1-c\IE[T_c]:=\vartheta_c.
	\end{equation*}
	By comparing $T_c$ with an exponential distribution with parameter $c$ we see that $c\IE[T_c]\leq 1$ for all $c>0$. Since, we assumed that $\IE[\Lambda(X_t)]<\infty$ for all $t\geq 0$ it follows that for every $\delta\in(0,1)$ there exists an $\varepsilon'>0$ such that $\int_{0}^{\varepsilon'}\IE[\Lambda(X_t)]<\delta$. With Jensen's inequality follows that
	\begin{equation*}
		c\IE[T_c]\geq  c\int_{0}^{\infty}\exp\bigg(-\int_{0}^{t}(\IE[\Lambda(X_s)]+c)ds\bigg)dt\geq  \int_{0}^{\varepsilon'}ce^{-ct-\delta}dt=e^{-\delta}(1-e^{-c\varepsilon'}).
	\end{equation*}
	Since $\delta$ does not depend on $c$ we get that $\lim_{c\to\infty} c\IE[T_c]\geq e^{-\delta}$, for every $\delta\in (0,1)$. Finally, we let $\delta\to 0$ which implies that $\lim_{c\to \infty} c\IE[T_c]= 1$. This is equivalent to $\vartheta_c\to 0$ as $c\to \infty$. 
	
	By using Markov's inequality we get the following estimate   
	\begin{equation*}
		e^{-c\theta n}\IP(S_n\leq \theta n)=e^{-c\theta n}\IP(e^{-c\theta n}\leq e^{-cS_n})\leq \IE[e^{-c\sum_{i=1}^nT_i}]=\vartheta_c^n.
	\end{equation*}
	Therefore, we get that
	\begin{equation*}
		\IP(S_n\leq \theta n)\leq \vartheta_c^n e^{c\theta n}=\exp\big(\big(c\theta +\log(\vartheta_c) \big) n \big)
	\end{equation*}
	Now choose $\theta_{c,\varepsilon}=c^{-1}\log\big(K_{\varepsilon,c}\vartheta_c^{-1}\big)$, where $K_{\varepsilon,c}:=\vartheta_c+\varepsilon$ and $\varepsilon\in (0,c\IE[T_c])$. This choice guarantees that $\theta_{\varepsilon,c}>0$ and $\Theta_{c,\varepsilon}:=\log(K^{-1}_{c,\varepsilon})>0$. Thus, we get that
	\begin{equation}\label{eq:ExpInequality}
		\IP(S_n\leq \theta_{\varepsilon,c} n)\leq \exp\big(\log(K_{\varepsilon,c}\vartheta_c^{-1}\vartheta_c)  n \big)=\exp(-\Theta_{c,\varepsilon} n)
	\end{equation}
	Note that, since $c\to \infty$ and $\varepsilon\to 0$ the constant $\Theta_{c,\varepsilon}\to \infty$. Thus, for any $\kappa>0$ we find an appropriate choice of the constants $c$ and $\varepsilon$ such that $\Theta_{c,\varepsilon}>\kappa$, while $\theta_{\varepsilon,c}>0$ is guaranteed.
\end{proof}
With this we can control the influence of vertices $y$ on the viral load $\bfX^N_{s,t}[\eta](x)$ of a vertex $x$ at time $t$, when these vertices $y$ are a large distance away. We set $\B_{n}(x):=\{y\in V: r(x,y)\leq n\}$ for all $x\in V$, i.e\ $\B_{n}(x)$ is the ball of radius $n$ around $x$ with respect to the graph distance  $r(\cdot,\cdot)$. Furthermore, we define $\del B_{n}(x):=\B_{n}(x)\backslash\B_{n-1}(x)$ for all $n\geq 1$ as the boundary of the ball.
\begin{proposition}\label{prop:BoundOnPotentialInfluence}
	Let $x\in V$, $s<t$, $\eta\in \N^V$ and
	\begin{equation*}
		\kappa>\sup_{n\geq 1}n^{-1}\log(|\partial\B_n(x)|):=D_{\max}.
	\end{equation*}
	Choose $N_0\in \N$ large enough so that $\kappa r(V_{N_0},x)\geq t-s$. Then it holds that 
	\begin{align}
		&\IP( \bfX^{N_0}_{s,t}[\eta](x)= \bfX^N_{s,t}[\eta](x)\, \forall \, N\geq N_0)\geq 1-\exp(-(\kappa-D_{\max})r(V_{N_0},x))\label{eq:ExponetialBoundsOnInfluence1},\\
		&\IP( \bfX^{N}_{s,t}[\eta](x)= \bfX^N_{s,t}[\eta_{N_0}](x)\, \forall \, N\geq N_0)\geq 1-\exp(-(\kappa-D_{\max})r(V_{N_0},x)),\label{eq:ExponetialBoundsOnInfluence2}
	\end{align}
	for every $\eta_{N_0}\in\N^V$ with $\eta_{N_0}(x)=\eta(x)$ for all $x\in V_{N_0}$.
\end{proposition}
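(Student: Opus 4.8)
The strategy is a standard "finite speed of propagation" argument for interacting particle systems, adapted here to the fact that the viral load at a vertex can be arbitrarily large but an outgoing infection to a fixed neighbour takes a waiting time distributed as in \eqref{eq:DistributionOfInfectionTime}. The point is that for the value $\bfX^N_{s,t}[\eta](x)$ to differ between two truncations, an infection must have travelled from a vertex outside $V_{N_0}$ all the way to $x$ along a path in the graphical representation, and by the previous lemma such a path of length $n$ takes time at least $\theta n$ except on an event of probability $\exp(-\kappa n)$. Since $V_{N_0}$ is at distance $d(V_{N_0},x)$ from $x$, and $\kappa\, d(V_{N_0},x)\geq t-s$, no such path can be completed in the time interval $[s,t]$, with high probability.

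More precisely, first I would make explicit what an ``infection path'' is in the Poisson construction: a sequence of vertices $z_0,z_1,\dots,z_k=x$ with $z_{j-1}\sim z_j$, together with times $s\leq u_1<u_2<\dots<u_k\leq t$ such that at time $u_j$ there is a point of $\II_{(z_{j-1},z_j)}$ with $\alpha<\Lambda(\bfX^N_{s,u_j-}[\eta](z_{j-1}))$, i.e.\ a successful infection arrow is used from $z_{j-1}$ to $z_j$. The key deterministic observation is: if $\bfX^{N_0}_{s,t}[\eta](x)\neq \bfX^N_{s,t}[\eta](x)$ for some $N\geq N_0$, then there is an infection path, entirely contained in $V_N$, starting at some $z_0\notin V_{N_0}$ and ending at $x$ (and similarly, if $\bfX^N_{s,t}[\eta](x)\neq \bfX^N_{s,t}[\eta_{N_0}](x)$, a path connecting $x$ to a vertex where $\eta$ and $\eta_{N_0}$ disagree, which lies outside $V_{N_0}$). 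This is where one uses that $\eta$ and $\eta_{N_0}$ agree on $V_{N_0}$, and that the on-site birth/death dynamics at a vertex do not by themselves create a discrepancy — only infection arrows can transport a discrepancy from one vertex to another, one step at a time.

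Next I would bound the probability of such a path. Along a self-avoiding path of length $k\geq n:=d(V_{N_0},x)$, the successive infection times are separated by waiting times that are stochastically dominated below by the i.i.d.\ variables $T_1,\dots,T_k$ of \eqref{eq:DistributionOfInfectionTime} (here monotonicity of $\Lambda$ and the coupling with $\bfZ$ from Proposition~\ref{prop:CouplingWithIndBD} is not even needed for the lower bound on waiting times: one uses the strong Markov property and that, given the load at $z_{j-1}$ after its $(j-1)$-st infection event is $\geq 1$, the next outgoing infection time to $z_j$ dominates a fresh copy of $T$ — actually it is exactly $T$ in distribution by the strong Markov property, since the load is set to $\vee 1$, and more load only speeds it up, so $T$ is a lower bound). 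Hence the total time to traverse the path is at least $S_k=\sum_{i=1}^k T_i$, and by the Lemma $\IP(S_k\leq \theta k)\leq e^{-\kappa' k}$ for suitable constants; choosing the time-constant so that $\theta = $ "the available time per step" and using $t-s\leq \kappa\, d(V_{N_0},x)$ forces any completed path to have length incompatible with the time budget except on the exponentially small event. Summing over the at most $|\partial \B_k(x)|\cdot(\text{number of paths})$ choices — bounded by $e^{D_{\max} k}$ up to constants, by the definition of $D_{\max}$ as $\sup_n n^{-1}\log|\partial\B_n(x)|$ — and over $k\geq n$, a geometric series in $e^{-(\kappa-D_{\max})}$ gives the bound $\exp(-(\kappa-D_{\max})d(V_{N_0},x))$ (absorbing the geometric-series constant; if one wants the clean constant, take $\kappa$ slightly larger at the start, or note the series is summable since $\kappa>D_{\max}$). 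The argument for \eqref{eq:ExponetialBoundsOnInfluence2} is identical with the roles of the ``source'' of the discrepancy swapped.

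The main obstacle, and the step deserving the most care, is the deterministic combinatorial claim that a discrepancy at $x$ forces the existence of an infection path from outside $V_{N_0}$ — in particular ruling out that a discrepancy is created ``out of nothing'' by the birth/death marks, and correctly handling the bookkeeping that the relevant path lies in $V_N$ (so that $\bfX^N$, not $\bfX^{N_0}$, governs the loads along it) while still starting outside $V_{N_0}$. One has to track the first time the discrepancy appears at each vertex and argue, by induction on potential jump times, that the first discrepancy to appear at any vertex $z$ is caused by an infection arrow from a neighbour that is already in discrepancy; iterating back in time and in space along the arrows used produces the path, whose starting vertex must be one where the initial configurations already disagreed (for \eqref{eq:ExponetialBoundsOnInfluence2}) or one not present in the smaller truncation (for \eqref{eq:ExponetialBoundsOnInfluence1}). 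Once this structural lemma is in place, the probabilistic estimate is a routine path-counting union bound powered by the preceding Lemma.
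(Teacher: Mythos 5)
Your proposal follows essentially the same route as the paper: a discrepancy at $x$ between truncations (or between initial configurations agreeing on $V_{N_0}$) forces an infection path in the graphical representation from outside $V_{N_0}$ to $x$; the traversal time of such a path dominates a sum of i.i.d.\ copies of $T$ from \eqref{eq:DistributionOfInfectionTime}, to which the preceding large-deviation lemma applies; and a path/boundary counting union bound with the constant $D_{\max}$ yields the exponential estimate, with \eqref{eq:ExponetialBoundsOnInfluence2} following because the path construction never looks at the initial values outside $V_{N_0}$. The only point to correct is your parenthetical justification of the waiting-time domination: the claim that, given the load at $z_{j-1}$ after its infection event is $\geq 1$, the next outgoing infection time to $z_j$ \emph{dominates} a fresh copy of $T$ is backwards — since $\Lambda$ is non-decreasing, a larger load gives a larger rate and hence a stochastically \emph{smaller} waiting time, so ``more load speeds it up'' argues against, not for, the lower bound. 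The correct fix, which the paper builds into its definition of the path $(s,y)\rightsquigarrow(t,x)$ and which your own structural lemma essentially supplies, is that a discrepancy can only be \emph{created} at a vertex whose current load is $0$: the $\vee 1$ update does nothing at an already infected site, and the on-site birth/death marks act identically as long as the two processes agree there. Hence each vertex on the path is freshly infected (load exactly $1$) at its path time, its load then evolves as a fresh BD-process started at $1$, and the increment to the next path time is bounded below by a genuine copy of $T$; with that observation in place your probabilistic estimate coincides with the paper's, up to the minor bookkeeping difference that you sum a geometric series over path lengths while the paper sums over the entry points $y\in\partial\B_{n_0}(x)$.
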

\begin{proof}
	Let us first fix some arbitrary vertex $x\in V_N$. We note that the state of $x$ at time $t$ with respect to $\bfX^{N_0}$ could only change with a larger $N$ if there is a new influence in form  of an infection path starting from some vertex $y\in V_N\backslash V_{N_0}$. 
	
	Thus, it can only hold that $\bfX^{N_0}_{s,t}[\eta](x)\neq \bfX^N_{s,t}[\eta](x)$ if there exists a path of vertices $y=x_0,\dots,x_{n+1}=x$ with $y\in V_{N_0}\backslash V_{N}$, $x_i\in V_{N_0}$ for all $i\in \{1,\dots, n+1\}$ and times $s\leq t_0\leq \dots \leq t_{n}\leq t$ with $n\geq r(x,y)$ such that $(t_{i},(x_{i},x_{i+1}))\in \cI$ and $\bfX^{N}_{s,t_0}[\eta](y)\geq 1$, $\bfX^{N_0}_{s,t_i-}[\eta](x_{i+1})=0$  for all $i\leq n$. We denote such a path by $(s,y)\rightsquigarrow(t,x)$.
	
	Let $X=(X_t)_{t\geq 0}$ be the BD process with rates functions $\birth(\,\cdot\,)$, $\death(\,\cdot\,)$ and $X_0=1$. Set $T_i:=t_{i+1}-t_{i}$ for $0\leq i\leq n$. Since we know that $\bfX_{s,t_i-}[\eta](x_i)=0$ we know that the sequences $(T_i)_i$ is i.i.d.\ and 
	\begin{equation*}
		\IP(T_i>s)=\IE\bigg[\exp\bigg(-\int_{0}^{s}\Lambda(X_u)du\bigg)\bigg].
	\end{equation*}
	In order to apply Lemma~\ref{eq:ControlOverSum} we excluded $t_0-s$, since this increment clearly has a different distribution from the $T_i$. However, because of monotonicity of the measure neglecting it is no problem. If $(s,y)\rightsquigarrow(t,x)$ then this surely implies that $\sum_{i=1}^{n}T_i<t-s$. Let $N$ be large enough so that $t-s\leq \kappa r(V_{N_0},x)$. Thus, we get
	\begin{equation*}
		\IP(S_n\leq t-s)\leq \IP(S_n\leq \kappa r(V_{N_0},x)) \leq \exp(-\kappa r(V_{N_0},x)),
	\end{equation*}
	where we used that $t-s\leq \kappa r(V_{N_0},x)$. Let $n_0:=r(V_{N_0},x)$, then it follows that
	\begin{equation*}
		\sum_{y\in \partial\B_{n_0}(x)}\IP\big((s,y)\rightsquigarrow(t,x)\big)\leq |\partial B_{n_0}(x)|\exp(-\kappa r(V_{N_0},x)) \leq \exp(-(\kappa-D_{\max}) r(V_{N_0},x)).
	\end{equation*}
	Now the first claim follows immediately, i.e.\ that
	\begin{equation*}
		\IP(\exists \, N\geq N_0 : \bfX^{N_0}_{s,t}[\eta](x)\neq \bfX^N_{s,t}[\eta'_{N_0}](x))\leq \exp(-(\kappa-D_{\max})r(V_{N_0},x)).
	\end{equation*}
	For the second claim we use that existence of a path $(s,y)\rightsquigarrow(t,x)$ does not depend in any way on the values of $\bfX^N_{s,t}[\eta](y)$ for $y\notin V_{N_0}$. Thus, we can change the initial configuration of $\bfX_{s,t}^N[\cdot]$ to any other $\eta'_{N_0}$ and nothing changes.
\end{proof}

\begin{theorem}\label{thm:ExistenceAndFeller}
	Let $s\in\IR$ and $\eta\in \N_0^V$. Then for every $t>s$ the limit
	\begin{equation}\label{eq:LimitDefinition}
		\bfX_{s,t}[\eta]=\lim_{N\to \infty}\bfX^N_{s,t}[\eta]
	\end{equation}
	exists.
	Let $\bfeta_0$ be a $\N_0^V$-valued random variable independent of all Poisson processes used in the construction of $\bfX$. Set $\bfeta_t:=\bfX_{t,0}^N(\bfeta_0)$, then $\bfeta=(\bfeta_t)_{t\geq 0}$ is a Feller process with semigroup 
	\begin{equation*}
		P_t(\cdot,\eta)=\IP(\bfX_{0,t}(\eta)\in \,\cdot\,), \quad \text{ where }\quad t\geq 0.
	\end{equation*}
\end{theorem}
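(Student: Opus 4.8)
The construction makes $N\mapsto \bfX^N_{s,t}[\eta](x)$ non-decreasing for every $x\in V$ and $t\ge s$, so the coordinatewise limit $\bfX_{s,t}[\eta](x):=\lim_N\bfX^N_{s,t}[\eta](x)=\sup_N\bfX^N_{s,t}[\eta](x)$ exists in $\overline{\N}_0$; by Proposition~\ref{prop:CouplingWithIndBD} and Corollary~\ref{cor:NonExplosivAndFiniteJumps}(i) it is bounded by $\bfZ_{s,t}[\eta](x)<\infty$ a.s., hence $\bfX_{s,t}[\eta]\in\N_0^V$ a.s. I would then sharpen this with Proposition~\ref{prop:BoundOnPotentialInfluence}: the failure probabilities $\exp(-(\kappa-D_{\max})d(V_{N_0},x))$ are summable along any exhaustion of $V$ (since $d(V_{N_0},x)\to\infty$), so by Borel--Cantelli there is, for each fixed $x$ and $t$, a.s.\ a random $N_*$ with $\bfX^{N}_{s,u}[\eta](x)=\bfX^{N_*}_{s,u}[\eta](x)$ for all $N\ge N_*$ and all $u\in[s,t]$ — the event in \eqref{eq:ExponetialBoundsOnInfluence1}, read at the endpoint $t$, forbids any influence from $V\setminus V_{N_0}$ reaching $x$ before time $t$, hence controls the whole path on $[s,t]$. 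Consequently each coordinate path $u\mapsto\bfX_{s,u}[\eta](x)$ is, on $[s,t]$ and for a.e.\ $\omega$, equal to the càdlàg path $\bfX^{N_*(\omega)}_{s,\cdot}[\eta](x)$ of a finite-volume construction, so it is càdlàg; letting $t\to\infty$ this holds on $[s,\infty)$.

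\textbf{Markov property and semigroup.}
I would pass the concatenation identity \eqref{ConcatinationMarkov} to the limit. Fix $s\le u\le t$. By \eqref{eq:ExponetialBoundsOnInfluence2}, for $x\in V$ the value $\bfX^N_{u,t}[\xi](x)$ agrees, off an event of probability at most $\exp(-(\kappa-D_{\max})d(V_{N_0},x))$, with $\bfX^N_{u,t}[\xi'](x)$ for any $\xi'$ coinciding with $\xi$ on $V_{N_0}$; since $\bfX^N_{s,u}[\eta]\uparrow\bfX_{s,u}[\eta]$ coordinatewise with each coordinate eventually constant, for $N$ large $\bfX^N_{s,u}[\eta]=\bfX_{s,u}[\eta]$ on $V_{N_0}$, so $\bfX^N_{u,t}[\bfX^N_{s,u}[\eta]](x)$ agrees off that small event with $\bfX^N_{u,t}[\bfX_{s,u}[\eta]](x)\to\bfX_{u,t}[\bfX_{s,u}[\eta]](x)$; letting $N\to\infty$ and then $N_0\to\infty$ gives $\bfX_{u,t}[\bfX_{s,u}[\eta]]=\bfX_{s,t}[\eta]$ a.s. Since $\II,\IU,\ID$ restricted to times in $(u,\infty)$ are independent of their restriction to $[s,u]$ — hence of $\bfX_{s,u}[\eta]$ — and, after translating time by $u$, equal in law to the original point processes, this identity is exactly the Markov property together with time-homogeneity of $\bfeta_t:=\bfX_{0,t}[\bfeta_0]$. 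Hence $P_t(\cdot,\eta):=\IP(\bfX_{0,t}[\eta]\in\cdot)$ is a Markov semigroup (measurable in $\eta$, as $\bfX_{0,t}[\eta]$ is a pointwise limit of measurable maps), and $(\bfeta_t)_{t\ge0}$ is Markov with càdlàg paths by the first step.

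\textbf{Feller property.}
I would first extend $\eta\mapsto\bfX_{s,t}[\eta]$ to $\eta\in\overline{\N}_0^V$ by $\bfX_{s,t}[\eta]:=\lim_n\bfX_{s,t}[\eta\wedge n]$ (coordinatewise truncation), which exists by monotonicity of the graphical construction in the initial configuration — established exactly as the monotonicity $\bfX^N[\eta]\le\bfX^M[\eta]$, using that $\Lambda$ is non-decreasing and that a jump changes any coordinate by at most $1$. Since $C(\overline{\N}_0^V)$ is the uniform closure of the functions depending on finitely many coordinates, it suffices to treat a cylinder function $f$ supported on a finite $A\subset V$. For $P_tf\in C(\overline{\N}_0^V)$: given $\eta_k\to\eta$ in the product topology and $\varepsilon>0$, choose $N_1\supseteq A$ with $\sum_{x\in A}\exp(-(\kappa-D_{\max})d(V_{N_1},x))<\varepsilon$; by \eqref{eq:ExponetialBoundsOnInfluence2} (extended to compactified initial data via the monotone approximation) $f(\bfX_{0,t}[\,\cdot\,])$ coincides off an $\varepsilon$-event with a function of the restriction to $V_{N_1}$ only, and on the finite set $V_{N_1}$ the finite-volume transition function extends continuously to $\overline{\N}_0^{V_{N_1}}$ (a finite-state pure-jump process, a coordinate frozen at $\infty$ acting as a constant, infinite-rate source of births there and of infections to its neighbours), so $\limsup_k|P_tf(\eta_k)-P_tf(\eta)|\le 2\varepsilon\|f\|_\infty$; let $\varepsilon\downarrow0$. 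For $P_tf(\eta)\to f(\eta)$ as $t\downarrow0$: the coordinate paths are right-continuous at $0$, so $f(\bfX_{0,t}[\eta])\to f(\eta)$ a.s., and bounded convergence finishes the claim.

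\textbf{Main obstacle.}
The passages to the limit in the finite-volume construction are routine; the genuinely technical point is the spatial continuity $P_t\colon C(\overline{\N}_0^V)\to C(\overline{\N}_0^V)$ on the \emph{compactification}. Proposition~\ref{prop:BoundOnPotentialInfluence} localises the dependence on far-away vertices, but one must still control the dependence of a coordinate of $\bfX_{0,t}[\eta]$ on large-but-finite coordinates of $\eta$, and the passage to the value $\infty$; this is where the monotone extension $\bfX_{s,t}[\eta]=\lim_n\bfX_{s,t}[\eta\wedge n]$ and the domination $\bfX^N\le\bfZ$ of Proposition~\ref{prop:CouplingWithIndBD} are used in tandem. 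It also clarifies in what sense ``Feller'' is meant here: $P_t$ maps $C(\overline{\N}_0^V)$ into itself and $P_tf\to f$ pointwise, rather than $\|P_tf-f\|_\infty\to0$, since a neighbour with infinite (or merely very large) viral load can infect essentially instantaneously when $\Lambda$ is unbounded.
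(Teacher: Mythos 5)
Your first two steps are essentially the paper's own argument: existence of the limit from Proposition~\ref{prop:BoundOnPotentialInfluence} together with Borel--Cantelli (the monotonicity of $N\mapsto\bfX^N_{s,t}[\eta]$ and the domination by $\bfZ$ from Proposition~\ref{prop:CouplingWithIndBD} are consistent with the construction and harmless additions), and the Markov property by passing the concatenation identity \eqref{ConcatinationMarkov} to the limit using \eqref{eq:ExponetialBoundsOnInfluence1} and \eqref{eq:ExponetialBoundsOnInfluence2}.

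The Feller part is where you depart from the paper, and where there is a genuine gap. The paper's formulation is continuity of $(\eta,t)\mapsto P_t(\eta,\cdot)$ on $\N_0^V\times[0,\infty)$: since $\N_0$ is discrete, $\eta_n\to\eta$ in the product topology means eventual agreement on every finite set, so the proof reduces to an almost sure pathwise statement — by Corollary~\ref{cor:NonExplosivAndFiniteJumps} there are no jump times affecting $x$ in a small window around $t$, and \eqref{eq:ExponetialBoundsOnInfluence2} removes the dependence on coordinates outside a large $V_{N_0}$ — and configurations with infinite coordinates never enter. You instead aim at the stronger statement that $P_t$ preserves $C\big(\overline{\N}_0^V\big)$, which forces you to extend the dynamics to initial data with coordinates equal to $\infty$ and to prove that the finite-window transition law is continuous at such configurations. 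That is precisely the crux of your route, and it is only asserted: the parenthetical justification is incorrect as stated (the window process on $\overline{\N}_0^{V_{N_1}}$ is not finite-state, and when $\Lambda$ is bounded — e.g.\ the constant-rate case used for the duality — a site frozen at $\infty$ infects its neighbours at rate $\sup_n\Lambda(n)<\infty$, not at infinite rate), and no argument is given that the law of the window coordinates converges when an initial coordinate tends to $\infty$. One would need, for instance, a comparison showing that a viral load started at $k$ stays large on $[0,t]$ with probability tending to $1$ (so its infection rate converges to $\lim_n\Lambda(n)\in(0,\infty]$), plus a squeeze based on monotonicity of the graphical construction to handle non-monotone approximating sequences. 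Until such an argument is supplied, your continuity-in-$\eta$ step is unproved; it is also more than the theorem requires, since the paper's notion of Feller avoids the compactification altogether.
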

\begin{proof}
	Since we consider the product topology, it suffices to show that
	\begin{equation*}
		\bfX_{t,s}[\eta](x)=\lim_{N\to \infty}\bfX^N_{t,s}[\eta](x)
	\end{equation*}
	almost surely for all $x\in V$, in order to show that the limit exists. A direct consequence of Proposition~\ref{prop:BoundOnPotentialInfluence} is the convergences in probability, i.e.\ $\bfX^N_{t,s}[\eta](x)\stackrel{\IP}{\to} \bfX_{t,s}[\eta](x)$. Since the bound in \eqref{eq:ExponetialBoundsOnInfluence1} is summable with respect to $N_0$, the almost sure convergences follows by an application of the Borel-Cantelli-Lemma.
	
	Next let $u\in[s,t]$, then by construction we know that $\bfX^N_{u,t}[\bfX^N_{s,u}[\eta]]=\bfX^N_{s,t}[\eta]$ for any $N\geq 0$. By \eqref{eq:ExponetialBoundsOnInfluence1} there exists a $N_0>0$ and a constant $c>0$ such that 
	\begin{equation*}
		\IP\big(\bfX_{u,t}[\zeta](x)=\bfX^N_{u,t}[\zeta](x)\big|\bfX^N_{s,u}[\eta]=\zeta\big)\geq 1-e^{-cN_0}
	\end{equation*}
	for all $N\geq N_0$ and all $\zeta\in \N^V$, where $c$ does not depend on $\zeta$.
	Thus, we get that
	\begin{equation*}
		\IP\big(\bfX_{u,t}[\bfX^N_{s,u}[\eta]](x)=\bfX^N_{u,t}[\bfX^N_{s,u}[\eta]](x)\big)\geq 1-e^{-cN_0}.
	\end{equation*}
	On the other hand, we find an even larger $N_1>N_0$ such that by \eqref{eq:ExponetialBoundsOnInfluence1} there exists a constant $c'>0$ such that 
	\begin{equation*}
		\IP(\bfX_{s,u}[\eta](y)=\bfX^N_{s,u}[\eta](y) \,\forall\, y\in V_{N_0})>1-e^{-c'N_0} 
	\end{equation*}
	for all $N>N_1$. Finally we can conclude with \eqref{eq:ExponetialBoundsOnInfluence2} that there exists a $N_2>N_1$ and constant $C>0$ such that 
	\begin{equation*}
		\IP\big(\bfX_{u,t}[\bfX_{s,u}[\eta]](x)=\bfX^N_{u,t}[\bfX^N_{s,u}[\eta]](x)\big)\geq 1-e^{-CN_0}
	\end{equation*}
	for all $N\geq N_2$. Thus, for any $x\in V$ we have $\bfX_{u,t}[\bfX_{s,u}[\eta]](x)=\bfX_{s,t}[\eta](x)$ almost surely, which follows by letting $N_0\to \infty$, and thus $N\to \infty$. This shows that the process $\bfeta=(\bfeta_t)_{t\geq 0}$ is indeed a Markov process.
	
	It remains to show that $t\to P_t(x,\cdot)$ is a continuous map from $\N^{V}\times [0,\infty)$ to $\cM_1(\N^{V})$, i.e.\ we need to show that $P_{t_n}(\eta_n,\cdot)\Rightarrow P_{t}(x,\cdot)$ as $(\eta_n,t_n)\to (x,t)$. Since we consider the product topology it suffices to show
	\begin{equation*}
		\bfX_{0,t_n}[\eta_n](x)\to \bfX_{0,t}[\eta](x) \quad \text{ almost surely as } n\to \infty
	\end{equation*}
	almost surely. First, let $T\geq t$ and define
	\begin{equation*}
		\cR_x(s,T,\eta):=\bigcup_{y\in \cN_x}\cI_{(x,y)}(s,T,\eta)\cup \cU_{x}(s,T,\eta)\cup \cD_{x}(s,T,\eta).
	\end{equation*}
	This random set contains all potential jump times that affect $x$ directly between times $s$ and $T$ and it is almost surely finite by Corollary~\ref{cor:NonExplosivAndFiniteJumps}. Thus, for every realisation of the PPP's $\Delta=(\II,\IU,\ID)$ there exists an $\varepsilon=\varepsilon(\Delta, \eta|_{\B_1(x)})>0$ such that $\cR_x(t-\varepsilon,t+\varepsilon,\eta)$ is empty. Note that $\varepsilon$ only depends on the values of $\eta(y)$ for $y\in \B_1(x)$. 
	
	Now we choose $n_0\geq 0$ large enough such that $\eta_n(y)=\eta(y)$ for all $y\in \B_1(x)$ and for all $n\geq n_0$. Then we choose $n_1\geq n_0$ so that $t_n\in (t-\varepsilon,t+\varepsilon)$ for all $n\geq n_1$. Finally, by \eqref{eq:ExponetialBoundsOnInfluence2} we see that for the given realisations of the PPP $\Delta$ there must exist a $N_0>0$ such that if $\eta(y)=\eta'(y)$ for all $x\in V_{N_0}$, then $\bfX_{0,t-\varepsilon}[\eta](x)=\bfX_{0,t-\varepsilon}[\eta'](x)$. Thus, we choose $n_2\geq n_1$ large enough so that $\eta_n(y)=\eta(y)$ for all $y\in V_{N_0}$. But together with the fact that $\cR_x(t-\varepsilon,t+\varepsilon,\eta)$ is empty this implies that 
	\begin{equation*}
		\bfX_{0,t_n}[\eta_n](x)=\bfX_{0,t}[\eta](x) \quad \text{ for all } n\geq n_2.\qedhere
	\end{equation*}
\end{proof}
After we showed that $\bfeta_t:=\bfX_{t,0}^N(\bfeta_0)$ is indeed a Feller process, what remains is to demonstrate its connection to the operator $\cG$ associated with the transitions \eqref{CPViral}, which is given by
	\begin{align*}
		\cG f(\eta)=&\sum_{x\in V}\birth(\eta(x))(f(\eta+\delta_x)-f(\eta))+\death(\eta(x))(f(\eta-\delta_x)-f(\eta))\\
		&+ \sum_{y:\{x,y\}\in E} \Lambda(\eta(y)) (f(\eta\vee \delta_x)-f(\eta)).
	\end{align*}
	Recall that $\cC_{\text{fin}}(\N_0^V)$ is the set of bounded continuous functions which only depends on finitely many coordinates.
\begin{proposition}\label{prop:FiniteCoordinatesDomain}
		Let the process $\bfeta$ be as in Theorem~\ref{thm:ExistenceAndFeller} and $(P_t)_{t\geq0}$ the associated semigroup. It holds that $\lim_{t\to \infty}\big|\big|\tfrac{1}{t}(P_tf-f)\big|\big|_{\infty}=\cG f$ for all $f\in \cC_{\text{fin}}(\N_0^V)$.
	\end{proposition}
	\begin{proof}
		By definition for every $f\in \cC_{\text{fin}}(\N_0^V)$ there exists a set $A\subset V$ such that $|A|<\infty$ and $f((\eta(x))_{x\in V})=f((\eta(x))_{x\in A})$ for all $\eta\in \N_0^V$. Now let $N>0$ be large enough such that 
		\begin{equation}
			\{y\in V: r(A,y)\leq 3 \}\subset V^N.
		\end{equation}
		For this choices of $N$ it holds that $\cG f=\cG^N f$, where $\cG^N$ is defined in $\eqref{eq:TruncatedGenerator}$. Since $\bfX^N_{0,t}[\eta]$ is defined on a countable state space we know that its a Feller process and that $\cG^N$ is the associated generator, i.e.\
		\begin{equation*}
			\lim_{t\to 0} \frac{1}{t}\big|\big|\IE[f(\bfX^N_{0,t}[\cdot])]-f\big|\big|_{\infty}= \cG^Nf.
		\end{equation*}
		Since we already know that $\bfX_{0,t}[\cdot]$ is a Feller process and $\cG f=\cG^N f$ it suffices to show that
		\begin{equation}\label{eq:SufficientConvergences}
			\lim_{t\to 0}\frac{1}{t}\big|\big|\IE[f(\bfX_{0,t}[\cdot])]-|\IE[f(\bfX^N_{0,t}[\cdot])]\big|\big|_{\infty}=0.
		\end{equation}
		To see this let $\eta\in \N^V_0$ and note that
		\begin{align*}
			|\IE[f(\bfX_{0,t}[\eta])]-|\IE[f(\bfX^N_{0,t}[\eta])|&\leq ||f||_{\infty}\IP\big( \exists x\in A: \bfX_{s,t}[\eta](x)\neq \bfX^N_{s,t}[\eta](x)  \big)\\
			\leq |A||\B_{3}(0)|\IP(S_2\leq t).
		\end{align*}
		The inequality in the last line comes from estimating the probability that the state of any $x\in A$ can be influenced by a neighbour in distance $3$. It can be derived by the same line of arguments as in the proof of Proposition~\ref{prop:BoundOnPotentialInfluence}, where we again ignore the first infection time, since its distribution is not the same as in \eqref{eq:DistributionOfInfectionTime}. To conclude \eqref{eq:SufficientConvergences} it remains to show that $\frac{1}{t}\IP(S_2\leq t)\to 0$ as $t\to 0$. 
		
		Recall that $S_2:= T_1+T_2$ with $T_1$ and $T_2$ are independent and identically distributed as specified in \eqref{eq:DistributionOfInfectionTime}, i.e.
		\begin{equation*}
			\IP(T> t)=\IE\bigg[\exp\bigg(-\int_{0}^{t}\Lambda(X_s)ds\bigg)\bigg] \qquad \text{for all} \quad t\geq 0,
		\end{equation*}	
		where  $X=(X_t)_{t\geq 0}$ is a BD-process with rate functions $\birth, \death$ and initial value $X_0=1$. To get a better control of $T$, we estimate the probability that $\sup_{s\leq t}X_s> C\in \N$ for $t$ small and $C\geq2$. Let $\overline{T}_1\stackrel{d}{=}\Gamma(C,b(C+1))$. Then, by a comparison with pure birth process with the rate function $b$, it follows that
		\begin{equation*}
			\IP\bigg(\sup_{s\leq t}|X_s|\geq C\bigg)<\IP(\overline{T}_1< t).
		\end{equation*}
		In the next step, we partition the event space according to whether the BD-process remains uniformly bounded by $C$ until time $t$. Let $\overline{T}_2\stackrel{d}{=}\Gamma(2, \Lambda(C))$. Then, it holds that
		\begin{align*}
			\tfrac{1}{t}\IP(S_{2}<t)\leq \tfrac{1}{t}\big(\IP(\overline{T}_2<t ) +2\IP(\overline{T}_1<t)\big).
		\end{align*}
		By using the explicit form of the Gamma distribution and that $C\geq2$ it follows that the expression convergences to $0$ as $t\to 0$. Thus, we can conclude \eqref{eq:SufficientConvergences}.
	\end{proof}
	
	\begin{proof}[Proof of Theorem~\ref{thm:ExistenceOfCPVL}]
		This is a direct consequence of Theorem~\ref{thm:ExistenceAndFeller} and Proposition~\ref{prop:FiniteCoordinatesDomain}.
	\end{proof}

\subsection{Some consequences of the Poisson construction for the CPVL}
From here onward, we assume that the CPVL $\bfeta$ is constructed via the above described Poisson construction, i.e.\ $\bfeta^{\eta}_t=\bfX_{0,t}[\eta]$ for all $t\geq 0$ and $\eta\in \N_0^V$, with $\bfX$ as in Theorem~\ref{thm:ExistenceAndFeller} and that the associated rate functions $\Lambda,\birth,\death$ always satisfy Assumption~\ref{ass:RateAssumption}. Moreover, it will be convenient for the following proofs to extend the notation for the different types of jump time to the CPVL $\bfeta$. Therefore, we set
\begin{equation}\label{eq:JumpTimesOfCPVL}
	\begin{aligned}
		\cI((y,x),\eta)&:= \{u \in [0,\infty): \exists \alpha\in [0,\infty)\text{ s.t. }  (\alpha,u) \in \II_{(y,x)} \text{ and } \alpha<\Lambda(\bfeta_t^{\eta}(y)) \},\\
		\cU(x,\eta)&:=\{u \in [0,\infty): \exists \alpha\in [0,\infty) \text{ s.t. } (\alpha,u) \in \IU_x \text{ and } \alpha< \birth(\bfeta_t^{\eta}(x))\},\\
		\cD(x,\eta)&:=\{u \in  [0,\infty): \exists \alpha\in [0,\infty) \text{ s.t. } (\alpha,u) \in \ID_x \text{ and } \alpha< \death(\bfeta_t^{\eta}(x))\}.
	\end{aligned}
\end{equation}
\begin{lemma}\label{lem:MonotonictyInRates}
	Let $\bfeta$ be CPVL with rate functions $\Lambda,b$ and $d$. Furthermore, suppose $\Lambda',\birth',\death':\N_0\to \N_0$ satisfy Assumption~\ref{ass:RateAssumption}. Furthermore, suppose that 
	\begin{equation}\label{eq:OrderOfRateFunctions}
		\Lambda'(n)\geq \Lambda(n), \birth'(n)\geq \birth(n) \text{ and } \death'(n)\leq \death(n) \text{ for all } n\geq 0.
	\end{equation}
	then there exists a CPVL $\bfeta'$ with infection rate $\Lambda'$ and rate functions $b'$ and $d'$ such that if $\bfeta_{0}\leq  \bfeta'_{0}$, then $\bfeta_{t}\leq  \bfeta'_{t}$ for all $t\geq 0$.
\end{lemma}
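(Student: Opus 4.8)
The plan is to couple $\bfeta$ and $\bfeta'$ by realising both on the \emph{same} graphical representation $\Delta=(\II,\IU,\ID)$ from Section~\ref{sec:Construction}, exploiting that the third coordinate of every Poisson point serves as a common acceptance threshold: a point is accepted by a given process exactly when its mark lies below the value of the relevant rate function evaluated at the current state. Since the triple $\Lambda',\birth',\death'$ satisfies Assumption~\ref{ass:RateAssumption}, Theorem~\ref{thm:ExistenceAndFeller} applied to it furnishes a CPVL $\bfeta'$, obtained as the almost sure limit $\bfeta'_t=\lim_{N\to\infty}{\bfX'}^{N}_{0,t}[\bfeta'_0]$ of truncated processes ${\bfX'}^{N}$ built from $\II^N,\IU^N,\ID^N$ with the primed rate functions. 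Assuming (as in Theorem~\ref{thm:ExistenceAndFeller}) that the initial data are defined on a common probability space independent of $\Delta$, it suffices, after conditioning on $\{\bfeta_0=\eta,\ \bfeta'_0=\eta'\}$ with $\eta\le\eta'$, to prove the pathwise comparison
\begin{equation*}
    \bfX^{N}_{s,t}[\eta]\le{\bfX'}^{N}_{s,t}[\eta']\qquad\text{for all }t\ge s,
\end{equation*}
for every fixed $N$ and every $\eta\le\eta'$ in $\N_0^{V_N}$, and then to let $N\to\infty$ through \eqref{eq:LimitDefinition}.

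Fix $N$. By Corollary~\ref{cor:NonExplosivAndFiniteJumps}(i), applied once with rates $\Lambda,\birth,\death$ and once with $\Lambda',\birth',\death'$, both $\bfX^{N}$ and ${\bfX'}^{N}$ are non-explosive, so on every compact time interval only finitely many Poisson points effect a jump in either process; enumerate the resulting potential jump times $s=S_0<S_1<S_2<\cdots$. I then argue by induction that $\bfX^{N}_{s,S_{n-1}}[\eta]\le{\bfX'}^{N}_{s,S_{n-1}}[\eta']$ forces $\bfX^{N}_{s,S_{n}}[\eta]\le{\bfX'}^{N}_{s,S_{n}}[\eta']$. Let the point attached to $S_n$ sit at a vertex $x$ (an $\IU$- or $\ID$-point) or at an oriented edge $(y,x)$ (an $\II$-point), with mark $\alpha$, and put $a:=\bfX^{N}_{s,S_n-}[\eta](x)\le a':={\bfX'}^{N}_{s,S_n-}[\eta'](x)$; all coordinates other than $x$ are unchanged at $S_n$. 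The decisive structural fact is that every jump alters $x$ by at most one unit. If $a<a'$, then $a'\ge a+1$, so the updated value of $\bfX^{N}$ at $x$ is at most $a+1\le a'$, whereas the updated value of ${\bfX'}^{N}$ at $x$ is at least $a'-1\ge a$ (it can only rise under an $\IU$- or $\II$-event and falls by at most one under a $\ID$-event), and the order persists. If $a=a'$, I invoke \eqref{eq:OrderOfRateFunctions}: for an $\IU$-point, $\birth(a)\le\birth'(a)$, so $\alpha<\birth(a)$ implies $\alpha<\birth'(a)$ and $\bfX^{N}$ rises at $x$ only if ${\bfX'}^{N}$ does; for a $\ID$-point, $\death'(a)\le\death(a)$, so ${\bfX'}^{N}$ falls at $x$ only if $\bfX^{N}$ does; for an $\II$-point on $(y,x)$, the monotonicity of $\Lambda$ together with $\Lambda\le\Lambda'$ gives $\Lambda(\bfX^{N}_{s,S_n-}[\eta](y))\le\Lambda'({\bfX'}^{N}_{s,S_n-}[\eta'](y))$, and the infecting vertex $y$ has positive load under ${\bfX'}^{N}$ whenever it does under $\bfX^{N}$, so $\bfX^{N}$ performs $x\mapsto x\vee1$ only if ${\bfX'}^{N}$ does; as $n\mapsto n\vee1$ is non-decreasing, the updated values at $x$ are again ordered. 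This closes the induction, hence the displayed comparison holds, and letting $N\to\infty$ in it (coordinatewise, via \eqref{eq:LimitDefinition}) yields $\bfeta_t\le\bfeta'_t$ for all $t\ge0$.

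I expect the only genuinely delicate point to be the bookkeeping of the merged potential-jump-time sequence of the coupled pair — that the superposed Poisson clock does not run away — which is legitimate precisely because \emph{both} rate triples obey the at-most-linear growth in Assumption~\ref{ass:RateAssumption}, so Corollary~\ref{cor:NonExplosivAndFiniteJumps}(i) applies to each separately and a union of two locally finite sets of effective jump times is again locally finite. Everything else reduces to the three-case verification above, which uses only the unit jump heights, the integrality of the loads, the monotonicity of $\Lambda$ and of $n\mapsto n\vee1$, and the inequalities \eqref{eq:OrderOfRateFunctions}. As immediate specialisations, taking $\Lambda'=\Lambda$, $\birth'=\birth$, $\death'=\death$ gives monotonicity of the CPVL in its initial configuration, while taking $\birth'=\birth$, $\death'=\death$ and constant infection rates $\lambda\le\lambda'$ gives monotonicity in $\lambda$, as used in the definition of $\lambda_c(\gamma)$.
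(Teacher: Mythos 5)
Your coupling of the two processes on the shared graphical representation $(\II,\IU,\ID)$, with the jump-by-jump case analysis (strict inequality at the affected vertex versus equality, and in the equal case using \eqref{eq:OrderOfRateFunctions} together with the monotonicity of $\Lambda$ to see that upward jumps of the unprimed process and downward jumps of the primed process are always matched), is exactly the argument the paper gives for this lemma; your additional detour through the truncated processes $\bfX^N$ and the limit \eqref{eq:LimitDefinition} is consistent with how the process is constructed and changes nothing essential. The only spot to tighten is the case $a<a'$: the bounds \emph{new value of the unprimed process} $\le a+1\le a'$ and \emph{new value of the primed process} $\ge a'-1\ge a$ do not by themselves order the two updated values, but your parenthetical observation closes this, since a single Poisson point has a single type, so the unprimed process can rise only at a $\IU$- or $\II$-point (where the primed one cannot fall) and the primed process can fall only at a $\ID$-point (where the unprimed one cannot rise).
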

\begin{proof}
	Let us fix an initial configuration $\bfeta_0=\eta\in \N_0^V$. Recall that $\II$, $\IU$ and $\ID$ are the three independent PPP's used in the construction of $\bfeta$. Let $\bfeta'$ be a CPVL with $\bfeta'_0=\eta$, which is constructed using the same PPP's just with respect to the rate functions $\Lambda,'\birth'$ and $\death'$ instead. Also let $\cI'((y,x),\eta)$, $\cU'(x,\eta)$ and $\cD'(x,\eta)$ denote the sets of the various jump times. Suppose $S$ is a jump time of either $\bfeta$ or $\bfeta'$ and let $x\in V$ be the unique associated vertex at which the potential change at time $S$ happens. 
	
	Suppose $\bfeta_{S-}\leq \bfeta'_{S-}$ holds; then it suffices to show that $\bfeta_{S}\leq \bfeta'_{S}$, i.e.\ that every possible jump preserves the order. If $\bfeta_{S-}(x)< \bfeta'_{S-}(x)$, then there is nothing to show, since every jump can at most change the viral load of $x$ by $\pm 1$. Thus, we can restrict ourselves to the case $\bfeta_{S-}(x)= \bfeta'_{S-}(x)$. In this case \eqref{eq:OrderOfRateFunctions} implies that if $S\in \cI((y,x),\eta)$ or $S\in \cU(x,\eta)$, then $S\in \cI'((y,x),\eta)$ or $S\in \cU'(x,\eta)$, that is if $\bfeta_{S-}(x)$ jumps up both process do so together, which preserves the order. On the other hand $S\in \cD'(x,\eta)$ implies $S\in \cD(x,\eta)$, which also preserves the order.
\end{proof}
Next, we show that the model is additive with respect to the maximum operation $\vee$. Note that the model is not necessarily additive in the classical sense, that is with respect to the $+$ operation.
\begin{lemma}\label{lem:Additive}
	The CPVL $\bfeta$ is additive, i.e.\ $\bfeta_t^{\eta_1\vee\eta_2}= \bfeta_t^{\eta_1} \vee\bfeta_t^{\eta_2}$ for all $t\geq 0$ with $\eta_1,\eta_2\in \N^V$.
\end{lemma}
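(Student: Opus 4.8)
The plan is to prove the identity first for the finite-volume approximations $\bfX^N_{s,\cdot}[\,\cdot\,]$ driven by the common Poisson processes $\II^N,\IU^N,\ID^N$, and then let $N\to\infty$. Fix a starting time $s$ and the two initial configurations $\eta_1,\eta_2$, and write $\eta_1\vee\eta_2$ for their pointwise maximum. By Corollary~\ref{cor:NonExplosivAndFiniteJumps}(i) none of the three processes $\bfX^N_{s,\cdot}[\eta_1]$, $\bfX^N_{s,\cdot}[\eta_2]$ and $\bfX^N_{s,\cdot}[\eta_1\vee\eta_2]$ explodes, so each has only finitely many potential jump times $T^N_n(\,\cdot\,)$ in any bounded interval; merging the three sequences yields an almost surely strictly increasing locally finite sequence $s=S_0<S_1<S_2<\cdots$, and each of the three processes is constant on every interval $(S_{k-1},S_k)$. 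Hence it suffices to show, by induction on $k$, that
\[
    \bfX^N_{s,S_k}[\eta_1\vee\eta_2]=\bfX^N_{s,S_k}[\eta_1]\vee \bfX^N_{s,S_k}[\eta_2],
\]
the case $k=0$ being the choice of initial condition.

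For the inductive step let $x$ be the vertex associated with $S_k$ (and $(y,x)$ the associated oriented edge if the point at $S_k$ comes from $\II^N$). Only the $x$-coordinate can change at $S_k$, so by the induction hypothesis it is enough to check the identity at $x$. Put $v_i:=\bfX^N_{s,S_k-}[\eta_i](x)$ for $i=1,2$, so that the $\eta_1\vee\eta_2$-process has value $v_1\vee v_2$ at $x$ just before $S_k$. If the point at $S_k$ comes from $\IU^N$ (respectively $\ID^N$) with mark $\alpha$, then the $x$-coordinate of the $\eta_i$-process changes by $+\1_{\{\alpha<\birth(v_i)\}}$ (respectively $-\1_{\{\alpha<\death(v_i)\}}$), and the $\eta_1\vee\eta_2$-process does the same with $v_1\vee v_2$ in place of $v_i$. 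Assuming without loss of generality $v_1\le v_2$, and using only that a single jump moves a coordinate by at most $1$, one checks directly (distinguishing $v_1=v_2$ from $v_1<v_2$) that the post-jump values $v_1',v_2'$ still satisfy $v_1'\le v_2'$, so their maximum is $v_2'$, which is precisely the post-jump value of the $\eta_1\vee\eta_2$-process. No monotonicity of $\birth$ or $\death$ is needed here.

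The only case using a hypothesis on the rates is the infection case, where the point at $S_k$ comes from $\II^N_{(y,x)}$ with mark $\alpha$. Let $w_i:=\bfX^N_{s,S_k-}[\eta_i](y)$; by the induction hypothesis the $\eta_1\vee\eta_2$-process has value $w_1\vee w_2$ at $y$, and we may assume $w_1\le w_2$. The $\eta_i$-process performs $v_i\to v_i\vee 1$ exactly when $w_i>0$ and $\alpha<\Lambda(w_i)$, and the $\eta_1\vee\eta_2$-process does so exactly when $w_2>0$ and $\alpha<\Lambda(w_2)$. If the latter condition holds, then the $\eta_2$-process also updates, so the post-jump values satisfy $v_1\le v_1'\le v_1\vee 1$ and $v_2'=v_2\vee 1$, whence $v_1'\vee v_2'=(v_1\vee v_2)\vee 1$, the post-jump value of the $\eta_1\vee\eta_2$-process. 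If the latter condition fails, then $w_2=0$ or $\alpha\ge\Lambda(w_2)$; since $w_1\le w_2$ and $\Lambda$ is non-decreasing, this forces $w_1=0$ or $\alpha\ge\Lambda(w_1)$, so neither the $\eta_1$- nor the $\eta_2$-process updates, and the identity is preserved by the induction hypothesis. This completes the induction, giving the claimed identity for every $N$.

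To finish, apply Theorem~\ref{thm:ExistenceAndFeller} to each of $\eta_1$, $\eta_2$ and $\eta_1\vee\eta_2$: on an almost sure event the three limits $\bfX_{s,t}[\cdot](x)=\lim_{N\to\infty}\bfX^N_{s,t}[\cdot](x)$ exist simultaneously for every $x\in V$ and are attained for all $N$ large enough, so passing to the limit in the finite-volume identity yields $\bfeta^{\eta_1\vee\eta_2}_t=\bfeta^{\eta_1}_t\vee\bfeta^{\eta_2}_t$ for all $t\ge 0$. I expect the point requiring the most care to be the infection case: the combined $\eta_1\vee\eta_2$-process must be shown to be driven consistently by the same Poisson marks even though the source vertex $y$ may carry different loads in the two coupled copies, and monotonicity of $\Lambda$ is exactly the property that makes this go through.
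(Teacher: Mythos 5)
Your proof is correct and follows essentially the same route as the paper: a jump-by-jump verification that each Poisson-driven update (birth, death, infection) preserves the identity $\bfeta^{\eta_1\vee\eta_2}=\bfeta^{\eta_1}\vee\bfeta^{\eta_2}$, with monotonicity of $\Lambda$ used exactly where the paper's argument needs it. The only difference is cosmetic: you carry out the induction on the truncated processes $\bfX^N$ and pass to the limit via Theorem~\ref{thm:ExistenceAndFeller}, whereas the paper argues directly on $\bfeta$ using the jump-time sets of \eqref{eq:JumpTimesOfCPVL}; this adds a bit of rigor but is not a genuinely different approach.
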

\begin{proof}
	We can use the same line of arguments as in the proof of Lemma~\ref{lem:MonotonictyInRates}. This means that if $S$ is a jump time of either of the three processes and we assume that $\bfeta^{\eta_1\vee\eta_2}_{S-}= \bfeta^{\eta_1}_{S-}\vee\bfeta^{\eta_2}_{S-}$ holds, then it suffices to show that $\bfeta^{\eta_1\vee\eta_2}_{S}= \bfeta^{\eta_1}_{S}\vee\bfeta^{\eta_2}_{S}$. This follows by the definition of potential jumps; see \eqref{eq:JumpTimesOfCPVL}.
\end{proof}
\begin{lemma}\label{lem:MonotonMarkov}
	Let $\eta_1,\eta_2\in \N^V$ with $\eta_1\leq  \eta_2$. The CPVL $\bfeta$ is a monotone Markov process, i.e.\
	$\bfeta_t^{\eta_1}\leq  \bfeta_t^{\eta_2}$ for all $t\geq 0$.
\end{lemma}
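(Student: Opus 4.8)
I would obtain this as an immediate consequence of additivity (Lemma~\ref{lem:Additive}). Since $\eta_1\le\eta_2$ implies $\eta_1\vee\eta_2=\eta_2$, Lemma~\ref{lem:Additive} yields, for every $t\ge 0$,
\begin{equation*}
    \bfeta^{\eta_2}_t=\bfeta^{\eta_1\vee\eta_2}_t=\bfeta^{\eta_1}_t\vee\bfeta^{\eta_2}_t\ge\bfeta^{\eta_1}_t,
\end{equation*}
the last inequality holding pointwise on $V$ because $a\le a\vee b$ for all $a,b\in\N_0$; this is precisely the claim.

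Should one prefer a self-contained argument, the same conclusion follows by repeating the coupling in the proof of Lemma~\ref{lem:MonotonictyInRates} with $\Lambda'=\Lambda$, $\birth'=\birth$, $\death'=\death$: the process $\bfeta'$ produced there is then, being built on the same triple $\II,\IU,\ID$, just $\bfeta^{\eta_2}$. Concretely, I would couple $\bfeta^{\eta_1}$ and $\bfeta^{\eta_2}$ on the common Poisson processes, first on a truncation $V_N$ and then pass to the limit $N\to\infty$ via Theorem~\ref{thm:ExistenceAndFeller}, and induct over the potential jump times $S$ of either process --- these do not accumulate near a fixed vertex on a bounded time interval by Corollary~\ref{cor:NonExplosivAndFiniteJumps}, so the induction is legitimate. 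Only the vertex $x$ attached to $S$ can change. If $\bfeta^{\eta_1}_{S-}(x)<\bfeta^{\eta_2}_{S-}(x)$ the order at $x$ cannot be broken, since the single event at $S$ moves each coordinate by at most $1$, upward for a birth/infection event and downward for a death event: in the first case $\bfeta^{\eta_1}_S(x)\le\bfeta^{\eta_1}_{S-}(x)+1\le\bfeta^{\eta_2}_{S-}(x)\le\bfeta^{\eta_2}_S(x)$, in the second $\bfeta^{\eta_1}_S(x)\le\bfeta^{\eta_1}_{S-}(x)\le\bfeta^{\eta_2}_{S-}(x)-1\le\bfeta^{\eta_2}_S(x)$. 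If $\bfeta^{\eta_1}_{S-}(x)=\bfeta^{\eta_2}_{S-}(x)$, the on-site birth and death rates at $x$ agree, so the two copies make the same on-site move, while for an infection event with source $y$ the monotonicity of $\Lambda$ together with $\bfeta^{\eta_1}_{S-}(y)\le\bfeta^{\eta_2}_{S-}(y)$ forces both copies to be reset to $\bfeta^{\eta_i}_{S-}(x)\vee 1$ simultaneously, and $a\mapsto a\vee 1$ is non-decreasing.

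I do not expect a genuine obstacle here. The only point that needs attention is that $\birth$ and $\death$ are not assumed monotone in the viral load, which is why the coupling argument must distinguish whether the two viral loads at $x$ coincide before the jump; once this case split is made, everything reduces to the bookkeeping over jump times already supplied by Corollary~\ref{cor:NonExplosivAndFiniteJumps} and Theorem~\ref{thm:ExistenceAndFeller}.
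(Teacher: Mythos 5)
Your main argument is exactly the paper's: the lemma is deduced as an immediate consequence of the additivity Lemma~\ref{lem:Additive}, using $\eta_1\vee\eta_2=\eta_2$, and your one-line derivation spells this out correctly. The additional coupling argument you sketch is a valid fallback but is not needed; the proposal matches the paper's proof.
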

\begin{proof}
	This is a immediate consequence of Lemma~\ref{lem:Additive}. 
\end{proof}

\subsection{Duality of CPVL and CPLI}
In this subsection we construct the CPLI in such a way that it is coupled with a CPVL such that the duality relation holds. The CPVL, denoted by $\bfeta$, has a constant infection rate, i.e.\ $\Lambda(n)=\lambda\in(0,\infty)$ for all $n\geq 0$. Recall that we assume $\birth,\death$ to satisfy Assumption~\ref{ass:RateAssumption}.

We use the same PPP's as in the construction of $\bfX^N$ to define the dual process. To be more precise fix $\xi\in\N^{V}$ and an arbitrary time $t\in \IR$. In the following we construct a process $\big(\widehat{\bfX}^N_{s,t}[\xi]\big)_{s\in (-\infty, t]}$ with initial value $\widehat{\bfX}^N_{t,t}[\xi](x)=\xi(x)$ for all $x\in V_N$. Note that in this case we let time run backwards. 

In the following we again suppress $N$ if it is clear by context. Similar as before we will now define jump times $(R_k)_{k\geq 0}$ and the changes in the process. First we set $R_0=t$ and $\widehat{\bfX}^N_{t,t}(\xi)=\xi$. Now assume that $R_{n-1}\geq s$ for some $n\geq 1$ and that $\big(\widehat{\bfX}^N_{u,t}[\xi]\big)_{u\leq R_{n-1}^{N}}$ is already defined, then we set
\begin{align*}
	\widehat{I}^N_{n}&:=\sup\{s\leq R_{n-1}: \exists ((x,y),(\alpha,s)) \in \II^N \text{ s.t. } \alpha<\lambda
	\},\\
	\widehat{U}^N_{n}(\xi)&:=\sup\{s\leq R_{n-1}: \exists (x,(\alpha,s)) \in \ID \text{ s.t. } \alpha< \death(\widehat{\bfX}^N_{ R_{n-1},t}[\xi](x)+1) \},\\
	\widehat{D}^N_{n}(\xi)&:=\sup\{s\leq R_{n-1}: \exists (x,(\alpha,s)) \in \IU \text{ s.t. } \alpha< \birth(\widehat{\bfX}^N_{R_{n-1},t}[\xi](x))\}.
\end{align*}
One notable difference is that $\widehat{I}^N_{n}$ does not depend on the initial configuration $\xi$, whereas the previous construction could also be modified in such a way for constant infection rates.

Then, set $R^N_{n}:= \widehat{I}^N_{n}\wedge \widehat{U}^N_{n}(\xi) \wedge \widehat{D}^N_{n}(\xi)$ and denote by $(\alpha_{n},x_{n})\in [0,\infty)\times V $ or $(y_{n},x_{n})\in \Vec{E}$ the associated point. Now we define the process after $R_{n-1}$ as follows. First we set $\bfxi_t\equiv \bfxi_{R_{n-1}}$ for all $t\in (R_{n-1},R_{n})$. Then, at time $R_{n}$ we set
\begin{align*}
	\widehat{\bfX}^N_{R_{n},t}[\xi](x_{n})=\begin{cases}
		0 &\text{ if } \quad ((x_{n},y_{n}),R_{n})\in \II \text{ and } \bfxi_{R_{n-1}}(y_{n})=0\\
		\widehat{\bfX}^N_{R_{n-1},t}[\xi](x_{n})+ 1 &\text{ if } \quad ((\alpha_{n},x_{n}),R_{n})\in \ID\\
		\widehat{\bfX}^N_{R_{n-1},t}[\xi](x_{n})- 1 &\text{ if } \quad (x_{n},R_{n})\in \IU\\
		\widehat{\bfX}^N_{R_{n-1},t}[\xi](x_{n}) &\text{ otherwise}
	\end{cases}
\end{align*}
and $\widehat{\bfX}^N_{R_{n},t}[\xi](y)=\widehat{\bfX}^N_{R_{n-1},t}[\xi](y)$ for all $y\neq x_{n}$.

Now by construction we again have $\widehat{\bfX}^N_{s,u}[\widehat{\bfX}^N_{u,t}[\xi]]=\widehat{\bfX}^N_{s,t}[\xi]$ where $s\leq u\leq t$. Unlike the previous construction, in this case we have $\widehat{\bfX}^N[\xi]\geq \widehat{\bfX}^M[\xi]$ for all $N\leq M$. Non-explosiveness of the finite system can be shown in the same way as before by coupling with a system of BD-Processes on $\N^V$ with rates $\death(\,\cdot\,+1)$ and $\birth(\cdot)$. This can be proven analogously as in Proposition~\ref{prop:CouplingWithIndBD}.

Finally, this Poisson construction can again be extend to infinite graphs. One can derive similar estimates as in Proposition~\ref{prop:BoundOnPotentialInfluence} for $\widehat{\bfX}_{s,t}$, which is, in fact, even easier, since the infection rate is no longer state dependent but constant. Then, one can show the analogous result of Theorem~\ref{thm:ExistenceAndFeller} and Proposition~\ref{prop:FiniteCoordinatesDomain}. This means that for every $s\in\IR$ and every $\xi\in \N^V$ the limit
\begin{equation}\label{eq:DualDoubleIndexedProcess}
	\widehat{\bfX}_{s,t}[\xi]=\lim_{N\to \infty}\widehat{\bfX}^N_{s,t}[\xi]
\end{equation}
exists. Furthermore, fix a $t\in \IR$, let $\bfxi_0$ be a $\N_0^V$ valued random variable independent of the PPP's $\II,\IU$ and $\ID$ used in the construction and set 
\begin{equation*}
	\bfxi_s:=\widehat{\bfX}_{(t-s)-,t}[\bfxi_0] \quad \text{ for }\quad s\geq 0.
\end{equation*}
Then, the process $\bfxi=(\bfxi_s)_{s\geq 0}$ is a Feller process. Note that by taking the left limit $\widehat{\bfX}_{(t-s)-,t}[\bfxi_0]$ we ensure that $\bfxi$ has right continuous paths.

For given $\xi\in \N_0^V$, denote again by $\widehat{\cI}(x)$ the potential infection times and by $\widehat{\cU}_{s,t}(x,\xi)$ and $\widehat{\cD}_{s,t}(x,\xi)$ the jumps up and down. In the following result we show that already by construction the CPLI is dual to the CPVL. This is also visualised in Figure~\ref{Xfig2-2}.

\begin{figure}[t]
	\centering 
	\subfigure{\includegraphics[width=67mm]{OnSitePPP.png}}\hfill
	\subfigure{\includegraphics[width=67mm]{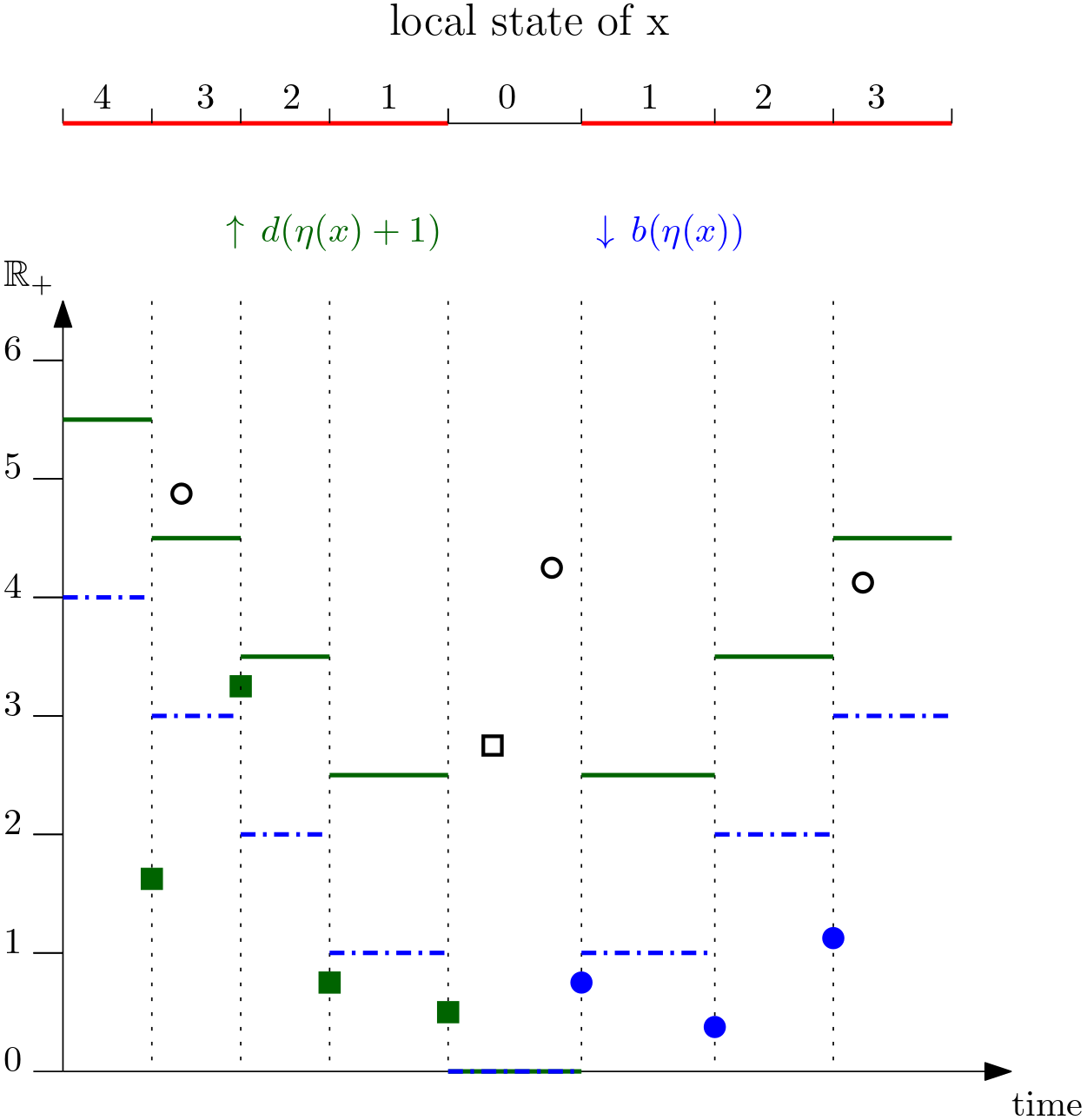}}
	\caption{Visualisation of the on site dynamic for both process with $\birth(n)=n$ and $\death(n)=n+\tfrac{1}{2}$ for all $n\geq 1$, where the symbols are the same as in Figure~\ref{fig:PoissonConstruction}. Note that the meaning of the squares and circles are reversed in the right picture compared to the left.
	}
	\label{Xfig2-2}
\end{figure}

\begin{theorem}\label{thm:DualityRelationFlow}
	Let $s<t$ and $\eta,\xi\in \N^{V} $. Let $(\bfX_{s,t}[\eta])_{t\geq 0}$ be the process defined in Theorem~\ref{thm:ExistenceAndFeller} and $(\widehat{\bfX}_{s,t}[\xi])_{t\geq 0}$ as in \eqref{eq:DualDoubleIndexedProcess}. If there exists a $u\in[s,t]$ such that $\bfX_{s,u}[\eta]\leq \widehat{\bfX}_{u+,t}[\xi]$, $\IP$-almost surely, then $\bfX_{s,u}[\eta]\leq \widehat{\bfX}_{u+,t}[\xi]$ for all $u\in[s,t]$, $\IP$-almost surely. This implies in particular that for any arbitrary $t\geq 0$ it holds that
	\begin{equation*}
		[0,t]\ni s\mapsto\IP\big(\bfeta^{\eta}_{s}\leq \bfxi^{\xi}_{t-s}\big)
	\end{equation*}
	is a constant function.
\end{theorem}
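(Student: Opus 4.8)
The plan is to establish the stronger pathwise assertion that, for $\IP$-almost every realisation of $\II,\IU,\ID$, the function
\[
 [s,t]\ni u\longmapsto \1\{\bfX_{s,u}[\eta]\leq \widehat{\bfX}_{u+,t}[\xi]\}
\]
is constant; this is a pathwise Siegmund duality between the forward flow $\bfX$ and the backward flow $\widehat{\bfX}$. From it the theorem is immediate: if the relation holds $\IP$-a.s.\ for one value $u_0\in[s,t]$, then on the full-measure event on which in addition the displayed indicator is constant it holds for every $u$, so $\IP(\bfX_{s,u}[\eta]\leq\widehat{\bfX}_{u+,t}[\xi])$ does not depend on $u$. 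For the final statement take $s=0$; for a fixed $u\in[0,t]$ the time $u$ is almost surely not a jump time of $\widehat{\bfX}_{\cdot,t}[\xi]$, hence $\widehat{\bfX}_{u-,t}[\xi]=\widehat{\bfX}_{u+,t}[\xi]$ $\IP$-almost surely, and by the construction of $\bfxi$ one has $\bfeta^{\eta}_u=\bfX_{0,u}[\eta]$ and $\bfxi^{\xi}_{t-u}=\widehat{\bfX}_{u-,t}[\xi]$, so $\IP(\bfeta^{\eta}_u\leq\bfxi^{\xi}_{t-u})=\IP(\bfX_{0,u}[\eta]\leq\widehat{\bfX}_{u+,t}[\xi])$, which we have shown to be independent of $u$.

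To prove the constancy I would first work on the finite truncations. Fix $N$ and a realisation. By Corollary~\ref{cor:NonExplosivAndFiniteJumps} and its analogue for $\widehat{\bfX}^{N}$ (non-explosiveness of the latter is obtained exactly as in Proposition~\ref{prop:CouplingWithIndBD}, now coupling with BD-processes with rates $\death(\,\cdot\,+1)$ and $\birth(\,\cdot\,)$), both $u\mapsto\bfX^{N}_{s,u}[\eta]$ and $u\mapsto\widehat{\bfX}^{N}_{u+,t}[\xi]$ are piecewise constant with only finitely many jumps in $[s,t]$; list these as $s<w_1<\dots<w_k<t$ (no Poisson point lies at $s$ or $t$ almost surely). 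Between consecutive $w_j$ the truncated indicator $\1\{\bfX^{N}_{s,u}[\eta]\leq\widehat{\bfX}^{N}_{u+,t}[\xi]\}$ is constant, so it remains to check it does not change across each $w_j$. Setting $a=\bfX^{N}_{s,w_j-}[\eta]$, $a'=\bfX^{N}_{s,w_j}[\eta]$, $c=\widehat{\bfX}^{N}_{w_j+,t}[\xi]$ and $c'=\widehat{\bfX}^{N}_{w_j-,t}[\xi]$ (so $a'$ is $a$ after the forward update at $w_j$ and $c'$ is $c$ after the dual update at $w_j$), this is the equivalence
\[
 a\leq c'\quad\Longleftrightarrow\quad a'\leq c .
\]
A single Poisson point changes the forward configuration at one vertex and the dual configuration at one vertex, so the equivalence can be checked coordinatewise. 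For an $\IU$-point at $x$ the forward update is $a(x)\mapsto a(x)+1$ with threshold $\birth(a(x))$ and the dual update is $c(x)\mapsto c(x)-1$ with threshold $\birth(c(x))$; for an $\ID$-point at $x$ the forward update is $a(x)\mapsto a(x)-1$ with threshold $\death(a(x))$ and the dual update is $c(x)\mapsto c(x)+1$ with threshold $\death(c(x)+1)$. In both cases only the coordinate $x$ is touched on either side, and the equivalence is the one-dimensional Siegmund computation: the inequality can be destroyed only in the borderline configuration $a(x)=c(x)$ (resp.\ $a(x)=c(x)+1$), but there the two thresholds coincide ($\birth(a(x))=\birth(c(x))$, resp.\ $\death(a(x))=\death(c(x)+1)$), so the two updates either both fire or both do not and the equivalence holds; in every other configuration the two sides cannot straddle the relevant integer. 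For an $\II$-point on the oriented edge $(y,x)$ with mark $\alpha<\lambda$ the forward update changes the coordinate $x$ by $a(x)\mapsto a(x)\vee1$ if $a(y)>0$, while the dual update changes the coordinate $y$ by $c(y)\mapsto 0$ if $c(x)=0$; a short case analysis in the two coordinates $x,y$, splitting on whether $c(x)=0$, again gives the equivalence, using that $\Lambda\equiv\lambda$ so that this point is relevant for both processes precisely when $\alpha<\lambda$.

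Finally I would pass to the limit $N\to\infty$. Since $\bfX^{N}_{s,u}[\eta]\uparrow\bfX_{s,u}[\eta]$ and $\widehat{\bfX}^{N}_{u+,t}[\xi]\downarrow\widehat{\bfX}_{u+,t}[\xi]$ by monotonicity of the truncations (Theorem~\ref{thm:ExistenceAndFeller} and its dual analogue), and the inequality is coordinatewise, one has for every $u$
\[
 \{\bfX_{s,u}[\eta]\leq\widehat{\bfX}_{u+,t}[\xi]\}=\bigcap_{N\geq1}\{\bfX^{N}_{s,u}[\eta]\leq\widehat{\bfX}^{N}_{u+,t}[\xi]\}.
\]
On the almost sure event on which, for every $N$, the $N$-th truncated indicator is constant in $u$, the indicator of the intersection is constant in $u$ as well, which proves the pathwise claim. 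I expect the main obstacle to be the verification of $a\leq c'\Leftrightarrow a'\leq c$ at the infection points: there the forward and the dual dynamics act on two different vertices and must be tracked simultaneously, whereas the on-site $\IU$- and $\ID$-cases and the truncation limit are routine.
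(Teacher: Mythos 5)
Your proposal is correct and follows essentially the same route as the paper: a pathwise case analysis at each Poisson point (of type $\IU$, $\ID$ or $\II$) showing that the order between the forward flow and the backward dual flow cannot change across a jump, using exactly the Siegmund threshold coincidences ($\birth(a(x))=\birth(c(x))$, $\death(a(x))=\death(c(x)+1)$) and the constant-rate infection case, which yields constancy of the indicator and hence of the probability. The only difference is technical dressing: you verify this on the finite truncations and pass to the limit by monotonicity of $\bfX^N$ and $\widehat{\bfX}^N$, whereas the paper argues directly on the infinite-volume flows (by contradiction at a generic jump time), implicitly relying on the almost sure local finiteness of jump times from Corollary~\ref{cor:NonExplosivAndFiniteJumps}.
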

\begin{proof}
	Let us again denote by $S$ a jump time of either of the two processes and $x\in V$ the unique associated vertex which is affected by the change. It suffices to show that 
	\begin{equation*}
		\bfX_{s,S-}[\eta]\leq \widehat{\bfX}_{S,t}[\xi] \text{ if and only if } \bfX_{s,S}[\eta]\leq \widehat{\bfX}_{S+,t}[\xi].
	\end{equation*}
	
	We first assume that
	\begin{equation}\label{eq:Contradiction1DualityProof}
		\bfX_{s,S-}[\eta]\not\leq \widehat{\bfX}_{S,t}[\xi] \text{ and } \bfX_{s,S}[\eta]\leq \widehat{\bfX}_{S+,t}[\xi]
	\end{equation}
	and show that this situation cannot occur. We will show this with a proof by contradiction. 
	\begin{itemize}
		\item  Suppose that $S\in \IU_x$. Then, the only relevant case is if $\bfX_{s,S-}[\eta](x)-1=\widehat{\bfX}_{S,t}[\xi](x)$, $S\in \widehat{\cD}_{s,t}(x,\xi)$ and $S\notin \cU_{s,t}(x,\eta)$. We have  $\bfX_{S-,t}[\eta](x)=\bfX_{S,t}[\eta](x)$, and thus there exists $\alpha\in [0,\infty)$ such that $(\alpha,S) \in \IU_x$  and $\alpha< \birth(\bfX_{s,S-}[\eta](x)+1)$, which implies $S\in \widehat{\cU}_{s,t}(x,\xi)$. This contradicts \eqref{eq:Contradiction1DualityProof}.
		\item Next suppose that $S\in \ID_x$. Similarly as in the previous point, the only relevant case is $\bfX_{s,S-}[\eta](x)-1=\widehat{\bfX}_{S,t}[\xi](x)$,  $S\in \cD_{s,t}(x,\eta)$ and $S\notin \widehat{\cU}_{s,t}(x,\xi)$. But, this implies that $\widehat{\bfX}_{S,t}[\xi](x)=\widehat{\bfX}_{S+,t}[\xi](x)$, and thus there exists $\alpha\in [0,\infty)$ such that $(\alpha,S) \in \IU_x$  and $\alpha< \death(\widehat{\bfX}_{s,S+}[\eta](x)+1)$, which implies $S\in \widehat{\cU}_{s,t}(x,\xi)$. Thus, we  have  again a contradiction to \eqref{eq:Contradiction1DualityProof}.
		\item Finally, assume that $S\in \II_{(x,y)}$ for some $y\in \cN_x$. First, we observe that $\bfX_{S-,t}[\eta](x)=\bfX_{S,t}[\eta](x)$ and $\widehat{\bfX}_{S,t}[\xi](x)=\widehat{\bfX}_{S+,t}[\xi](x)$. Furthermore, the only relevant case is if $\widehat{\bfX}_{S+,t}[\xi]\geq \bfX_{S,t}[\eta](x)>0$. It follows immediately that $\bfX_{S,t}[\eta](y)>0$, and thus we know by assumption that $\widehat{\bfX}_{S+,t}[\xi](y)>0$. This allows us to conclude that $\widehat{\bfX}_{S+,t}[\xi](x)=\widehat{\bfX}_{S,t}[\xi](x)\geq \bfX_{S,t}[\eta](x)$, and thus $\bfX_{s,S-}[\eta]\leq \widehat{\bfX}_{S,t}[\xi]$. This leads to a contradiction of \eqref{eq:Contradiction1DualityProof}.
	\end{itemize}
	On the other hand, one can show that 
	\begin{equation*}
		\bfX_{s,S-}[\eta]\leq \widehat{\bfX}_{S,t}[\xi] \text{ and } \bfX_{s,S}[\eta]\not\leq \widehat{\bfX}_{S+,t}[\xi]
	\end{equation*}
	cannot be true by the same line of arguments. 
\end{proof}
\begin{proof}[Proof of Proposition~\ref{prop:Duality}]
	This is a direct consequence of Theorem~\ref{thm:DualityRelationFlow}.
\end{proof}
Finally, we state the monotonicity properties of the CPLI.
\begin{lemma}\label{lem:MonotonictyForTheCPLI}
	Let $\bfxi=(\bfxi_t)_{t\geq 0}$ be a CPLI with infection rates $\lambda>0$ and rate function $\birth,\death$.
	\begin{enumerate}
		\item  $\bfxi$ is a monotone Markov process, i.e.\ if $\xi_1\leq \xi_2$, then $\bfxi_t^{\xi_1}\leq \bfxi_t^{\xi_2}$ for all $t\geq 0$, where $\xi_1,\xi_2\in \N^V$.
		\item $\bfxi$ is a monotone with respect to $\lambda$, i.e.\ if $\lambda<\lambda'$ then there exists a CPVL $\bfxi'$ with infection rate $\lambda'$ and rate functions $\birth$ and $\death$ and $\bfxi_{0}\leq \bfxi'_{0}$ such that $\bfxi_{t}\leq  \bfxi'_{t}$ for all $t\geq 0$.
	\end{enumerate}
\end{lemma}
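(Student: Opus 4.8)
The plan is to prove both assertions by the graphical-representation coupling already used for Lemma~\ref{lem:MonotonictyInRates} and Lemma~\ref{lem:Additive}: realise the two processes to be compared off a common family of Poisson processes and check that the coordinatewise partial order on $\N_0^V$ is preserved across every potential jump time. Recall that $\bfxi$ is obtained from the dual flow $\widehat{\bfX}$ of \eqref{eq:DualDoubleIndexedProcess}, which is driven by $\II,\IU,\ID$, and that at a fixed vertex $x$ the relevant jump-time sets $\widehat{\cI}(x)$, $\widehat{\cU}_t(x,\cdot)$ and $\widehat{\cD}_t(x,\cdot)$ are almost surely locally finite, by the dual analogues of Proposition~\ref{prop:CouplingWithIndBD} and Corollary~\ref{cor:NonExplosivAndFiniteJumps} (domination by independent BD-processes with rates $\death(\,\cdot\,+1)$ and $\birth(\,\cdot\,)$); this makes the induction over jump times below legitimate.

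For part~(1) I would fix $\xi_1\le\xi_2$, build $\bfxi^{\xi_1}$ and $\bfxi^{\xi_2}$ off the same $\II,\IU,\ID$, and induct over the potential jump times. Let $S$ be such a time with associated vertex $x$ and suppose $\bfxi^{\xi_1}_{S-}\le\bfxi^{\xi_2}_{S-}$. If $\bfxi^{\xi_1}_{S-}(x)<\bfxi^{\xi_2}_{S-}(x)$ there is nothing to check, since an on-site birth/death event changes the value at $x$ by at most $1$ and an activation event only moves $x$ to $0$, hence downwards. If $\bfxi^{\xi_1}_{S-}(x)=\bfxi^{\xi_2}_{S-}(x)$, then an on-site event fires for both processes at once (same value, hence same rate $\birth(\,\cdot\,)$ or $\death(\,\cdot\,+1)$) and preserves equality, while an activation arrow $(x,y)\in\II$ resets $\bfxi^{\xi_i}_S(x)$ to $0$ precisely when $\bfxi^{\xi_i}_{S-}(y)=0$; since $\bfxi^{\xi_1}_{S-}\le\bfxi^{\xi_2}_{S-}$ forces $\{y:\bfxi^{\xi_2}_{S-}(y)=0\}\subseteq\{y:\bfxi^{\xi_1}_{S-}(y)=0\}$, the larger process is reset only when the smaller one is, and otherwise only the smaller one drops to $0$. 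In every case the order survives, so $\bfxi^{\xi_1}_t\le\bfxi^{\xi_2}_t$ for all $t\ge0$.

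For part~(2) I would couple the CPLI with infection rate $\lambda$ and the CPLI with infection rate $\lambda'$, $\lambda<\lambda'$, by driving both off the single PPP $\II$: the $\lambda$-process uses the marks $\alpha<\lambda$, the $\lambda'$-process the marks $\alpha<\lambda'$, so that the activation arrows of the smaller rate form a subfamily of those of the larger rate, while $\IU$ and $\ID$ are shared. Started from a common configuration, the jump-by-jump analysis of part~(1) applies essentially verbatim --- the extra activation arrows with marks in $[\lambda,\lambda')$ act on only one of the two processes and always push that coordinate down to $0$ --- and produces a coordinatewise ordering of the two processes valid for all $t\ge0$, which is the asserted monotonicity of $\bfxi$ in $\lambda$.

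The one point I expect to require genuine care, rather than routine Poisson bookkeeping, is the anti-monotonicity of the active set $\{y:\xi(y)=0\}$ as a function of $\xi$: this is exactly what makes the discontinuous ``reset to $0$'' interaction compatible with the product order on $\N_0^V$, and it is what replaces the simple rate comparison that sufficed for the purely on-site rates in Lemma~\ref{lem:MonotonictyInRates}. Everything else --- local finiteness of the driving Poisson data at each vertex, and the passage from the truncated construction $\widehat{\bfX}^N$ to its limit --- is handled exactly as in the construction of $\bfxi$ preceding the lemma.
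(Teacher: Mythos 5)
Your proposal is correct and is essentially the paper's own argument: the paper proves this lemma by exactly the jump-by-jump order-preservation check you describe (it simply defers to the couplings of Lemma~\ref{lem:MonotonictyInRates} and Theorem~\ref{thm:DualityRelationFlow}), and the anti-monotonicity of the active set $\{y:\xi(y)=0\}$ is indeed the one genuinely new ingredient that makes the reset-to-$0$ interaction compatible with the product order. In part~(2), do state the resulting direction explicitly: the coupling you build shows that the \emph{larger}-rate process is dominated by the smaller-rate one (the extra arrows with marks in $[\lambda,\lambda')$ only reset coordinates to $0$), i.e.\ $\bfxi'_t\le\bfxi_t$, which is the direction actually needed for $\lambda\mapsto\IP_{\lambda}(\bfxi^{\szero}_t(x)>0)$ to be non-increasing as used after Proposition~\ref{prop:Duality}.
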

\begin{proof}
	This can be proven analogously as Lemma~\ref{lem:MonotonictyInRates} and Theorem~\ref{thm:DualityRelationFlow}, i.e.\ by showing that all potential jumps preserver the order.
\end{proof}

\section{Survival of the CPVL and its upper invariant law}\label{sec:SurvivalCPVL}
The first result we will show in this section is that the critical behaviour of the survival probability of the CPVL does not depend on the initial configuration as long as the initial configuration $\eta$ is non-empty and finite. This means that 
\begin{equation*}
	\IP(|\bfeta^{\delta_{\zero}}_t|>0 \,\forall\, t\geq 0)>0\,\, \Leftrightarrow\,\,\IP(|\bfeta^{\eta}_t|>0 \,\forall\, t\geq 0)>0
\end{equation*}
for all $\eta\in \N_0^V$ with $|\eta|\in (0,\infty)$.
\begin{proof}[Proof of Propostion~\ref{prop:CriticalValueIndependentOfStart}]
	If the left-hand side is true, then by monotonicity, shown in Lemma~\ref{lem:MonotonMarkov}, and translation invariance the right-hand side is a direct consequence. On the other hand, by additivity, shown in Lemma~\ref{lem:Additive}, we know that
	\begin{equation*}
		\bfeta^{\eta}_t=\bigvee_{x\in V: \eta(x)>0} \bfeta_t^{\eta(x)\delta_x}
	\end{equation*}
	for all $t\geq 0$. In particular it holds that if $\bfeta^{\eta}$ has a strictly positive probability to survive so does $\bfeta^{M \delta_x}$, where $M:=\max_{x\in V}\eta(x)<\infty$. 
	Now we see that
	\begin{equation*}
		\IP(\bfeta^{\delta_x}_1=M\delta_x)>0
	\end{equation*}
	and together with the the Markov property we conclude that
	\begin{equation*}
		\IP(|\bfeta^{\delta_x}_t|>0 \, \forall\,  t\geq 0)\geq \IP(|\bfeta^{M\delta_x}_t|>0\,\forall\, t\geq 0)\IP(\bfeta^{\delta_x}_1=M\delta_x)>0.\qedhere
	\end{equation*}
\end{proof}
Now we prove the sufficient criterion stated in Proposition~\ref{thm:upperbound} for survival of the CPVL in the sense that the survival probability is positive.
\begin{proof}[Proof of Proposition~\ref{thm:upperbound}]
	Recall that $\bfeta$ is a CPVL constructed via the Poisson construction given in Section~\ref{sec:Construction} with an infection rate function $\Lambda$ and rate functions $\birth$ and $\death$. According to Proposition~\ref{lem:MonotonictyInRates} we can couple $\bfeta$ with another CPVL $\bfeta'$ with the same $\Lambda$ and $\death$, but a different birth rate function $b'\equiv 0$, i.e.\ $\birth'(n)=0$ for all $n\geq 0$, such that $\bfeta_t \geq \bfeta'_t$ for all $t\geq0$, since $\birth(n)\geq \birth'(n)$ for all $n\geq 0$. This implies in particular that if $\bfeta'$ survives so does $\bfeta$.
	
	The choice of $\birth'$ implies that for the process $\bfeta'$ we do not allow any birth events on a vertex. Therefore, at infection, a vertex gets assigned the viral load $1$, which cannot increase further and will eventually drop to $0$ at rate $\death(1)$. This means that $\bfeta'$ has the same dynamics as a classical contact process with infection rate $\Lambda(1)$ and recovery rate $\death(1)$. Thus, by definition of the critical value $\lambda_c^{CP}$ it follows that  $\bfeta'$ survives with positive probability if $\Lambda(1)/d(1)>\lambda_c^{CP}$.
\end{proof}
Next, we prove the criterion that guarantees extinction stated in Theorem~\ref{thm:LowerBound}. 
\begin{proof}[Proof of Theorem~\ref{thm:LowerBound}]
	We consider the situation where $\bfeta_{0}=\delta_{\zero}$, i.e.\ the origin is infected and in state $1$. The aim of the proof is to couple the CPVL with a discrete time branching process such that if the branching process dies out so does the CPVL. Let us introduce the following notation
	\begin{equation}
		\T:=\{\alpha=0 \alpha_1\cdots \alpha_n : n\in \N,\ a_i\in \N, \ \forall \  i\leq n\}.
	\end{equation}
	The idea of this coupling is to count every infection event which could potentially transmit an infection. This means we count every $((x,y),t)\in \II$ with $\bfeta_{t-}(x)>0$ regardless of the state of the neighbour $y$.
	
	Thus, we change our perspective in the sense that we say that we start initially with a particle at the origin $\zero$, which is born at time $0$ with initial viral load $1$ and the viral load evolves according to a BD-process $X(0)=(X_t(0))_{t\geq 0}$. This particle has a life time $T_0:=\inf\{t>0: X_t(0)=0\}\stackrel{d}{=} \tau_{\text{rec}}$ and we lable it by $\bfX_{0}:=(\zero,0,X(0))$. Let us denote by  
	\begin{equation*}
		\cI_{0}:=\{(t,y) \in [0,T_0)\times \cN_{\zero}: \exists \alpha\in [0,\infty)\text{ s.t. }  \big((0,y),(\alpha,t)\big) \in \II \text{ and } \alpha<\Lambda(X_t(0)) \}
	\end{equation*}
	the set of all birth times and location of all particles generated by the initial particle $\bfX_0$.
	It is not difficult to see that the total number of generated particles $|\cI_{0}|\stackrel{d}{=} \text{Poi}\big(D\int_{0}^{\tau_{\text{rec}}}\Lambda(X_s)ds\big)$, where $X$ is a BD-process with rates $\birth$ and $\death$ and $X_0=1$. We can now order the points in $\cI_{0}$ according to the birth time such that we can represent the set as
	\begin{equation*}
		\cI_{0}:=\{(x_{01},t_{01}),\dots (x_{0|\cI_{0}|},t_{0|\cI_{0}|})\}
	\end{equation*}
	where $0\leq t_{01}\leq \cdots\leq t_{0|\cI_{0}|}<T_0$ and $x_{0k}\in N_{\zero}$ for all $k\in \{1,\cdots, |\cI_{0}|\}$. Note that since we assumed that $\IE[\int_{0}^{\tau_{\text{rec}}}\Lambda(X_s)ds]<\infty$ there are almost surely only finitely many space-time points.
	
	The only piece which is missing, is the evolution of the offspring particles. These are again started with an initial viral load of $1$ which evolve according to independent BD-process $X(0k)$ with rate functions $\birth$ and $\death$ and we denote the life time by $T_{0k}$. 
	
	For the sake of the coupling with $\bfeta$ we check for every pair $(x_{0k},t_{0k})$ if $\bfeta_{t_{0k}-}(x_{0k})=0$. If that is the case then we can choose $X(0k)=\bfeta_{t_{0k}}(x_{0k})$, since $t_{0k}$ is indeed a true infection time for CPVL such that the evolution of this particle is coupled via the Poisson construction with the viral load of vertex $x_k$ started at time $t_k$. If that is not the case, i.e.\ $\eta_{t_k}(x_k)>0$, then we choose $X(0k)$ independent from the Poisson construction and the other particles. We can interpret the second case as if we generate an artificial particle. 
	
	We again denote all offspring particles by $\bfX_{0k}:=(x_{0k},t_{0k},X(0k))$ and $\cI_{0k}$ again denotes the set of all birth times and location of all particles generated by the $0k$th-particle $\bfX_{0k}$.
	
	According to this scheme we successively generate a family of random variables $\bfX_{\alpha}=(x_{\alpha},t_{\alpha},X(\alpha))$ and random sets $\cI_{\alpha}$. Of course if the $\alpha$-th particle was never created then $\cI_{\alpha}=\emptyset$. With this in place we define by
	\begin{equation*}
		\bfzeta_s(x):=\1_{\{x_{0}=x, s\in [0,T_{0})\}}+\sum_{\alpha\in \T}\1_{\{\cI_{\alpha}\neq \emptyset\}}\sum_{k=1}^{|\cI_{\alpha}|}\1_{\{x_{\alpha k}=x, s\in [t_{\alpha k},T_{\alpha k}+t_{\alpha k})\}}
	\end{equation*}
	the number of all particles alive at time $s$ at location $x$. Note that by construction $\bfeta_s\leq \bfzeta_s$ for all $s\geq 0$. Thus, if we can show that $|\bfzeta_t|= 0$ at some time $t$, then this implies extinction of $\bfeta$.
	
	Since we assumed that $\IE[\int_{0}^{\tau_{\text{rec}}}\Lambda(X_s)ds]<\infty$ we know that every particle will die almost surely. Thus, $|\bfzeta_t|= 0$ at some time $t$ is equivalent that the total progeny $\cT:=1+\sum_{\alpha\in\T}|\cI_{\alpha}|<\infty$ almost surely. Now if we forget about the locations and precise times when the particles are alive, then $\cT$ is nothing else but the total progeny of a discrete time branching process with offspring distribution $Y\stackrel{d}{=} \text{Poi}( D \int_{0}^{\tau_{\text{rec}}}\Lambda(X_s)ds)$ and we know that if $\IE[Y]= D \IE[\int_{0}^{\tau_{\text{rec}}}\Lambda(X_s)ds]<1$, then $\cT<\infty$ almost surely, which proves the claim.
\end{proof}
Finally, we show the existence of an upper invariant law $\overline{\nu}$ if $\IE[\tau_{\text{rec}}]<\infty$.
\begin{proof}[Proof of Theorem~\ref{thm:InvariantDistributionCPVL}]
	Let us denote by $(T_t)_{t\geq 0}$ the semigroup associated with $(\bfeta_t)_{t\geq 0}$ and by $(S_t)_{t\geq 0}$ the semigroup associated with the Markov process $(\bfzeta_t)_{t\geq 0}$ with transitions 
	\begin{equation*}
		\begin{aligned} 
			\zeta(x)&\to \zeta(x)+1\quad \text{ at rate } 
			\birth(\eta(x))
			\text{ and }\\
			\zeta(x)&\to \zeta(x)-1\quad \text{ at rate } \death(\eta(x))\1_{\{\eta(x)\geq 2\}}.
		\end{aligned}
	\end{equation*}
	Appealing to Proposition~\ref{prop:CouplingWithIndBD} we can couple these two process such that if $\bfeta_0\leq \bfzeta_0$, then $\bfeta_t\leq \bfzeta_t$ for all $t\geq 0$. Note that the state space of $\bfzeta$ is $\N^V$.
	
	Furthermore, let $f:\overline{\N}_0^V\to \IR$ be a continuous and increasing function, i.e.\ $f(\eta^1)\leq f(\eta^2)$ if $\eta^1\leq \eta^2$. By the coupling between $(\bfeta_t)_{t\geq 0}$ and $(\bfzeta_t)_{t\geq 0}$ we know that $T_tf(\eta)\leq S_tf(\eta)$ for all $\eta\in \N^V$ and $t\geq 0$ if $f$ is increasing. Note that since both systems are monotone it follows that $T_tf$ and $S_tf$ are again increasing function. First of all, it follows that
	\begin{equation*}
		\int f(x) \pi T_{t+s}(d x)=\int T_{s}[T_{t}f(x)] \pi(d x)\leq \int S_{s}[T_{t}f(x)] \pi(d x)=\int T_{t}f(x) \pi S_{s}(d x).
	\end{equation*}
	Recall that $\pi$ was the invariant distribution of $(\bfzeta_t)_{t\geq 0}$, i.e.\ $\pi S_s=\pi$ for all $s\geq 0$. Thus, it follows that
	\begin{equation*}
		\int f(x) \pi T_{t+s}(d x)\leq \int f(x) \pi  T_{t}(d x).
	\end{equation*}
	This means that $t\mapsto \int f(x) \pi  T_{t}(d x)$ is a decreasing function for all increasing functions $f$. Furthermore, this function is bounded from below by $0$ such that it follows that
	\begin{equation*}
		\overline{\nu}f:=\lim_{t\to \infty}\int f(x) \pi  T_{t}(d x)
	\end{equation*}
	exists for all increasing $f$. Note that the class of all bounded and increasing functions is convergence determining.
	
	Next let $\mu$ be another probability law on $\N^V$ such that $\pi\prec \mu$. By monotonicity it follows that $\pi T_t f\leq \mu T_t f$ for all $t\geq 0$, which implies
	\begin{equation*}
		\overline{\nu}f\leq \liminf_{t\to \infty} \int T_s f(x) \mu T_{t}(d x).
	\end{equation*}
	Monotonicity also implies that
	\begin{equation*}
		\limsup_{t\to \infty} \int T_s f(x) \mu T_{t}(d x)\leq \lim_{t\to \infty} \int T_s f(x) \mu  S_{t}(d x)= \pi T_s f,
	\end{equation*}
	where we also used that $\pi$ is the unique invariant (limiting) distribution of $(S_t)_{t\geq 0}$. But now we can use the semigroup property of $(T_t)_{t\geq 0}$, i.e.\ that $T_tT_s=T_{t+s}$ and that $t\mapsto T_t$ is strongly continuous and send $s\to \infty$. This yield that 
	\begin{equation*}
		\overline{\nu}f\leq \lim_{t\to \infty} \int f(x) \mu T_{t}(d x)\leq \overline{\nu}f.
	\end{equation*}
	Thus, $\overline{\nu}$ is the largest invariant distribution with respect to the stochastic order.
	
	It remains to show the two additional statements. First, we know that $\pi(\N^V)=1$ and by construction $\overline{\nu}\preceq \pi$. Thus it follows that $\overline{\nu}(\N^V_0)=1$.
	
	For the second statement we note that $p:=\overline{\nu}(\{\szero\})\in \{0,1\}$ follows by the fact that $\nu(\,\cdot\, ):=\overline{\nu}(\,\cdot\, |\{\szero\})$ is again an invariant distribution and 
	\begin{equation*}
		\overline{\nu}=(1-p)\nu+p \delta_{\szero}.
	\end{equation*}
	If we assume that $p<1$, then it follows that $\overline{\nu}\preceq \nu$, but since $\overline{\nu}$ is the upper invariant law it must hold that $\overline{\nu}= \nu$, which yields that $p=0$.
\end{proof}

\begin{proof}[Proof of Corollary~\ref{Cor:SurvivalForSpecialCase}]
	Recall that we assumed $\Lambda(n)=\lambda n^{\gamma}$ for all $n\geq 0$, where $\lambda,\gamma>0$. Then, we denoted by $\lambda_c(\gamma)$ the corresponding critical value for survival. A direct consequence of Proposition~\ref{thm:upperbound} is that $\lambda_c(\gamma)<\infty$ for all $\gamma\geq0$ in both cases.
	
	For the other direction we first prove that
	\begin{equation}\label{eq:EquivalenceForSurvival}
		\IE\bigg[\int_{0}^{\tau_{\text{rec}}}\Lambda(X_s)\mathrm{d}s\bigg]<\infty \Leftrightarrow \sum_{n=1}^{\infty} \frac{n^\gamma}{\death(n)} \prod_{j=1}^{n-1}\frac{\birth(j)}{\death(j)}<\infty.
	\end{equation}
	For $\gamma=0$ this is equivalent to $\IE[\tau_{\text{rec}}]<\infty$, which we will assume from now on.
	
	Let $Z$ be BD-process on $\N$ with $Z_0=1$ and the same rates as $X$, except that it cannot enter the state $0$, i.e.\ it has transitions
	\begin{align*}
		n\to n+1 &\text{ with rate } \birth(n)\\
		n\to n-1 &\text{ with rate } \death(n)\1_{\{n\geq 2\}}.
	\end{align*}
	Since we assumed $\IE[\tau_{\text{rec}}]<\infty$, we know that the process $Z$ is positive recurrent. Thus, we have 
	\begin{equation*}
		\pi(k)=\pi(1)\frac{\birth(1)\cdots \birth(k-1)}{\death(2)\cdots \death(k)} \quad \text{ with }\quad \pi(1):=\bigg(1+\sum_{n=1}^{\infty}\prod_{i=1}^{n} \frac{\birth(i)}{\death(i+1)}\bigg)^{-1}.
	\end{equation*}
	We define $T_1:=\inf\{t\geq 1: Z_t=1\}$. Then, one can show that 
	\begin{equation*}
		\IE\bigg[\int_{0}^{\tau_{\text{rec}}}\Lambda(X_s)\mathrm{d}s\bigg]=\frac{\birth(1)+\death(1)}{\death(1)}\IE\bigg[\int_{0}^{T_1}\Lambda(Z_s)\mathrm{d}s\bigg] 
	\end{equation*}
	using that the number of times needed to enter state $0$ from $1$ is geometric distributed  with success probability $\frac{\death(1)}{\birth(1)+\death(1)}$. 
	
	Next, we use that $\pi(y)=\IE\Big[\int_{0}^{T_1} \1_{\{Z_t=y\}}dt\Big]$ for all $y\in V$.The right-hand side is a stationary distribution of $Z$. This can be demonstrated by performing a time shift in the integral and applying the strong Markov property. Moreover, we also note that $T_1<\infty$ almost surely.	See the proof of \cite[Proposition~2.59]{liggett2010continous} for more details. Moreover, the equality follows from the fact that the stationary distribution is unique. Thus, we get that
	\begin{equation*}
		\IE\bigg[\int_{0}^{T_1}\Lambda(Z_t)dt\bigg]=\sum_{k=1}^{\infty}\Lambda(k)\IE\bigg[\int_{0}^{T_1} \1_{\{Z_t=k\}}dt\bigg]=\pi(1)\sum_{k=1}^{\infty}\Lambda(k)\prod_{i=1}^{k-1} \frac{\birth(i)}{\death(i+1)}.
	\end{equation*}
	Putting everything together yields that
	\begin{equation*}
		\IE\bigg[\int_{0}^{\tau_{\text{rec}}}\Lambda(X_s)\mathrm{d}s\bigg] \Leftrightarrow \sum_{n=1}^{\infty} \frac{n^\gamma}{\death(n)} \prod_{j=1}^{n-1}\frac{\birth(j)}{\death(j)},
	\end{equation*}
	which proves the claim.
	
	Now we just need to verify \eqref{eq:EquivalenceForSurvival} for both cases. If this is done, then by Theorem~\ref{thm:upperbound} we get that for $\lambda$ large enough the CPVL dies out, and thus $\lambda_c(\gamma)>0$.
	
	First set $\birth(n)=n$ and $\death(n)=(n+(a-1))\1_{\{n\geq 1\}}$ for all $n\geq 0$ with $a\geq 1$, then we get that
	\begin{equation*}
		\sum_{n=1}^{\infty} \frac{n^\gamma}{n+a-1} \prod_{j=1}^{n-1}\frac{j}{j+a-1}=  \sum_{n=1}^{\infty} \Gamma(a+1)n^{\gamma-1}\frac{\Gamma(n+1)}{\Gamma(n+a+1)}<\infty
	\end{equation*}
	if $\gamma<a-1$, since $\lim_{n\to \infty}\frac{\Gamma(n+a)}{\Gamma(n)}\frac{1}{n^a}=1$.

	Next let $\birth(n)=\alpha n$ and $\death(n)=\beta n$ for all $n\geq 0$ with $\alpha<\beta$, then we get for all for all $\gamma\geq 0$ that 
	\begin{equation*}
		\sum_{n=1}^{\infty} \frac{n^\gamma}{\beta n} \prod_{j=1}^{n-1}\frac{\alpha}{\beta}=  \sum_{n=1}^{\infty} n^{\gamma-1}\alpha\bigg(\frac{\alpha}{\beta}\bigg)^{n}<\infty.\qedhere
	\end{equation*}
\end{proof}

\section{Phase transition of the CPLI}\label{Sec:PhaseTransitionCPLI}
Throughout the section, we consider the CPVL with a constant infection rate function, i.e.\ $\Lambda(n)=\lambda$ for all $n\geq 0$, and assume that $\supp(b)=\supp(d)=\N$. Recall that we denote the critical infection for survival by $\lambda_c$. 

The main goal of this section is to prove Theorem~\ref{thm:PhasetrastionCPLI}, and therefore to establish the connection between the phase transition of the CPLI and the CPVL. But in order to do so we need some technical results about survival probability of the CPVL.
\begin{lemma}\label{lem:AsymtoticSurvivalAS}
	Assume that $\lambda>\lambda_c$ and let $(M_N)_{N\geq 1}$ be a monotone sequences of subsets that convergence to $V$, i.e. $M_N\subset V$ such that $M_N\uparrow V$. Then it holds that 
	\begin{equation*}
		\lim_{N\to \infty}\IP(|\bfeta^{\delta_{M_N}}_t|>0 \,\forall \, t\geq 0)=1.
	\end{equation*}
\end{lemma}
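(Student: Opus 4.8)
The plan is to establish the stronger statement that, $\IP$-almost surely, at least one of the single-site processes survives forever, and then to read off the assertion by monotonicity together with continuity of measure. For $x\in V$ set $A_x:=\{\,|\bfeta^{\delta_x}_t|>0 \text{ for all } t\geq 0\,\}$, the event that the CPVL started from a single infected particle at $x$ survives. Since $\szero$ is absorbing, $A_x$ is precisely the complement of $\{\exists\,t\geq 0: \bfeta^{\delta_x}_t=\szero\}$, so the hypothesis $\lambda>\lambda_c$ and the definition of $\lambda_c$ give $\IP(A_\zero)>0$; by vertex-transitivity of $G$ (more precisely, by equivariance of the graphical construction under the group $\mathfrak g$, which acts transitively on $V$) we get $\IP(A_x)=\IP(A_\zero)=:\rho>0$ for every $x\in V$. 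As $\delta_{M_N}\geq\delta_x$ for each $x\in M_N$, monotonicity of the CPVL established in Section~\ref{sec:Construction} gives $\bfeta^{\delta_{M_N}}_t\geq\bfeta^{\delta_x}_t$ for all $t$, hence
\begin{equation*}
    \big\{\,|\bfeta^{\delta_{M_N}}_t|>0 \ \forall\, t\geq 0\,\big\}\ \supseteq\ \bigcup_{x\in M_N}A_x .
\end{equation*}
Since $M_N\uparrow V$, the right-hand side increases to $\bigcup_{x\in V}A_x$, so by continuity of measure from below it suffices to prove $\IP\big(\bigcup_{x\in V}A_x\big)=1$.

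For this I would invoke ergodicity. All the processes $\bfeta^{\delta_x}$ are measurable functionals of the same independent Poisson processes $\II,\IU,\ID$ from Section~\ref{sec:Construction}, and the group $\mathfrak g$ acts on the configuration space of these Poisson processes by translating the edge- and vertex-coordinates; the left translations are automorphisms of $G$ and preserve the intensity measures, so the joint law of $(\II,\IU,\ID)$ is $\mathfrak g$-invariant. This action is moreover mixing on the Poisson configuration space: for any two subsets of $\vec{E}\times[0,\infty)\times\IR$ (or of $V\times[0,\infty)\times\IR$) of finite intensity, the intensity of the intersection of one with a $g$-translate of the other tends to $0$ as $g$ leaves every finite subset of $\vec{E}$ (resp.\ $V$), which is the classical criterion for a Poisson point process to be mixing under a measure-preserving group action, and a product of mixing actions is mixing; in particular the joint action is ergodic. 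Because the construction of Theorem~\ref{thm:ExistenceAndFeller} is $\mathfrak g$-equivariant — the process built on the $g$-translated environment from $\delta_x$ is the $g$-translate of the one built from $\delta_{g^{-1}x}$ — the shift by $g$ maps the event $A_x$ onto $A_{gx}$, so $\bigcup_{x\in V}A_x$ is a $\mathfrak g$-invariant event; ergodicity forces $\IP\big(\bigcup_{x\in V}A_x\big)\in\{0,1\}$, and since this probability is at least $\rho>0$ it equals $1$. Combined with the previous paragraph, $\IP(|\bfeta^{\delta_{M_N}}_t|>0\ \forall\,t)\geq\IP\big(\bigcup_{x\in M_N}A_x\big)\to 1$, which is the claim.

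The only steps that are not immediate are the two ingredients of the last paragraph: ergodicity of the translation action on the joint Poisson configuration space, and genuine $\mathfrak g$-equivariance of the limiting graphical construction $\bfX=\lim_N\bfX^N$. The first is standard — a Poisson suspension of a measure-preserving action with no invariant set of finite positive measure is ergodic, and here this holds because $\mathfrak g$ has only infinite orbits on $\vec{E}$ and on $V$ (arguing via mixing also sidesteps the fact that a product of ergodic systems need not be ergodic). The second follows from the evident equivariance of the truncations $\bfX^N$, choosing the exhaustion $(V_N)$ to be $\mathfrak g$-compatible, or noting that the limit does not depend on the exhaustion. Everything else — positivity of $\rho$, the monotone inclusion above, and the passage $N\to\infty$ — is immediate from results already established, so I expect the ergodicity input to be the main (though still routine) obstacle.
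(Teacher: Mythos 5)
Your proof is correct and follows essentially the same route as the paper: reduce by monotonicity/additivity to showing that almost surely some single-site process $\bfeta^{\delta_x}$ survives, and then exploit invariance of the Poisson graphical construction under the graph automorphisms. The only difference is the ergodic-theoretic tool: the paper picks a single automorphism $S$ with infinite orbit and applies Birkhoff's ergodic theorem to the survival indicators $Y_k=\1_{A_{S^{-k}(\zero)}}$ to find some $Y_k=1$ almost surely, whereas you obtain $\IP\big(\bigcup_{x\in V}A_x\big)\in\{0,1\}$ from ergodicity (via mixing) of the full $\mathfrak g$-action on the Poisson environment together with equivariance — both hinge on the same invariance and yield the same conclusion.
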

\begin{proof}
	First, note that we assumed that $G=\Gamma(\mathfrak{g},\mathfrak{s})$ is a Cayley graph with $\mathfrak{g}$ being an infinite group that is finitely generated by $\mathfrak{s}$. Thus, there must exist a graph automorphism $S$ and a vertex $x_0\in V$ such that $|\{S^n(x_0): n\in \Z\}|=\infty$.
	Without loss of generality, we set $x_0:=\zero$ otherwise we relabel. Furthermore, we set $\cS:=\{S^n(\zero): n\in \Z\}$.
	
	Now define $Y_x:=\1_{\{|\bfeta^{x}_t|>0\,\forall t\geq 0\}}$ as the indicator variable for the event of survival of the process, which starts with only $x$ being in state $1$ and everything else $0$. 
	
	Recall that we constructed $\bfeta$ through three independent PPP $\II,\IU$ and $\ID$. From a different perspective these PPP's can be seen as families of independent Poisson point processes on $\IR\times \IR_+$, namely $(\II_{(x,y)})_{(x,y)\in \vec{E}}$, $(\IU)_{x\in V}$ and $(\ID)_{x\in V}$ on $\IR\times \IR_+$. To shorten the notation, we define $\Delta:=(\II,\IU,\ID)$.
	
	It is clear that $S$ is a measure preserving map with respect to the distribution of this family of the three Poisson processes, i.e.\ 
	\begin{equation*}
		S^{-1}(\II,\IU,\ID)=:\big(S^{-1}(\II),S^{-1}(\IU),S^{-1}(\ID)\big)\stackrel{d}{=}(\II,\IU,\ID).
	\end{equation*}
	Furthermore, every of the three processes are a collection of independent random variables, and thus are ergodic with respect to $S$. Since the three processes are independent it follows immediately that $(\Delta,S)$ is ergodic.
	
	Since $\bfeta$ is constructed via the Poisson construction we see that there must exist a measurable function $f$ mapping $\Delta$ to $\{0,1\}$ such that
	\begin{align*}
		f(S^{-k}(\Delta))=\1_{\{|\bfeta^{x_k}_t|>0\,\forall t\geq 0\}}:=Y_k
	\end{align*}
	for every $k\in\Z$, where $x_k=S^{-k}(\zero)$. Note that by translation invariance, $\IP( Y_0 = 1 )=\IP(Y_{k} = 1)$ for all $k\in \Z$ and by assumption we know that $\IP(Y_{\zero} = 1)>0$. Then by Birkhoff's mean ergodic theorem \cite[Theorem~9.6]{kallenberg2002foundations}, it follows that
	\begin{align*}
		\frac{1}{2n}\sum_{k=-n}^{n}Y_k =\frac{1}{2n}\sum_{k=-n}^{n}f(S^{-k}(\Delta))\to\IE[f(\Delta)]=\IP\big(|\bfeta^{\delta_{\zero}}_t|>0\,\forall\, t\geq 0\big)>0
	\end{align*}
	almost surely. This implies that almost surely there must exist a $k$ for which $Y_k= 1$. By additivity, it follows that the event $\big\{|\bfeta^{\delta_{\cS}}_t|>0 \,\,\forall t\geq 0\big\}$ occurs as soon as the event $\{Y_k = 1\}$ occurs for some vertex in $x_k\in \cS\subset V$ which proves the statement.
\end{proof}

\begin{lemma}\label{lem:AysmtoticSurvivalAtOneSite}
	Assume that $\lambda>\lambda_c$. Then it holds that 
	\begin{equation*}
		\lim_{N\to \infty}\IP\big(|\bfeta^{N\delta_{\zero}}_t|>0 \, \forall \,t\geq 0\big)=1.
	\end{equation*}
\end{lemma}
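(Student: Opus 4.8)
The plan is to reduce the claim to Lemma~\ref{lem:AsymtoticSurvivalAS} via the intermediate statement that, for every fixed $R\in\N$,
$\lim_{N\to\infty}\IP\big(\exists\,t\ge0:\bfeta^{N\delta_\zero}_t\ge\delta_{\B_R(\zero)}\big)=1$ (call this Step~1). Granting Step~1, fix $\varepsilon>0$ and, using Lemma~\ref{lem:AsymtoticSurvivalAS} with the exhausting sequence $M_k=\B_k(\zero)$, pick $R$ with $\IP(|\bfeta^{\delta_{\B_R(\zero)}}_t|>0\,\forall t)\ge1-\varepsilon$. Let $\sigma:=\inf\{t\ge0:\bfeta^{N\delta_\zero}_t\ge\delta_{\B_R(\zero)}\}$. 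On $\{\sigma<\infty\}$ the process is nonzero on $[0,\sigma]$ (the all-zero configuration is absorbing and $\bfeta^{N\delta_\zero}_\sigma\ne\szero$), and by the strong Markov property (the process is Feller by Theorem~\ref{thm:ExistenceAndFeller}) together with monotonicity of the CPVL, $\IP(|\bfeta^{N\delta_\zero}_{\sigma+s}|>0\,\forall s\mid\cF_\sigma)\ge\IP(|\bfeta^{\delta_{\B_R(\zero)}}_s|>0\,\forall s)\ge1-\varepsilon$ on $\{\sigma<\infty\}$. Hence $\IP(|\bfeta^{N\delta_\zero}_t|>0\,\forall t)\ge(1-\varepsilon)\,\IP(\sigma<\infty)$, which by Step~1 is $\ge(1-\varepsilon)^2$ once $N$ is large; letting $\varepsilon\downarrow0$ finishes the proof.

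For Step~1 I would combine two ingredients. The first is that the origin stays infected for a time of order $\log N$: while $\bfeta^{N\delta_\zero}_t(\zero)>0$ this coordinate performs exactly a birth--death process with rates $\birth,\death$ started from $N$ (incoming infections act only through $\,\cdot\vee1$ and so do nothing while the load is positive), so $T_0:=\inf\{t:\bfeta^{N\delta_\zero}_t(\zero)=0\}$ is its extinction time. Since $\birth(n),\death(n)\le Cn$ for some $C>0$, an elementary monotone coupling of birth--death processes (monotone in both rate functions, in the spirit of Lemma~\ref{lem:MonotonictyInRates}) bounds $T_0$ from below by the extinction time of a pure death process with rate $n\mapsto Cn$ started from $N$, i.e.\ by $\sum_{n=1}^N\mathrm{Exp}(Cn)$, whose mean is $\sim C^{-1}\log N$ and whose variance is bounded; so for any fixed $c<C^{-1}$, setting $L_N:=c\log N$, Chebyshev's inequality gives $\IP(T_0>L_N)\to1$.

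The second ingredient is the auxiliary process $\widetilde\bfeta$, constructed from the same Poisson processes as $\bfeta$ but with the origin's viral load \emph{frozen} at $1$ (an immortal infected source at $\zero$); because $\Lambda\equiv\lambda$ this does not alter the origin's emission rates, and the same order-preservation argument as in Lemma~\ref{lem:MonotonictyInRates} yields $\bfeta^{N\delta_\zero}_t\ge\widetilde\bfeta^{\delta_\zero}_t$ for all $t\in[0,T_0)$. Note $\widetilde\bfeta$ uses neither $\IU_\zero$ nor $\ID_\zero$, hence is independent of $T_0$. Now $\widetilde\bfeta$ is a monotone Markov process with $\widetilde\bfeta_s\ge\delta_\zero$ for every $s$, and $p_0:=\IP(\widetilde\bfeta^{\delta_\zero}_1\ge\delta_{\B_R(\zero)})>0$: the explicit event in which a single chain of infection events pushes the infection out to radius $R$ within the interval $[0,1]$ while each newly infected site (whose load is a birth--death process from $1$) avoids hitting $0$ before time $1$ has positive probability. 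By monotonicity this $p_0$ lower bounds $\IP(\widetilde\bfeta^{\eta}_1\ge\delta_{\B_R(\zero)})$ uniformly over $\eta\ge\delta_\zero$, so the Markov property gives $\IP(\widetilde\bfeta_{j+1}\ge\delta_{\B_R(\zero)}\mid\cF_j)\ge p_0$ for every integer $j\ge0$; a restart over the disjoint unit windows $[j,j+1]$, $0\le j<\lfloor L_N\rfloor$, then yields $\IP(\exists\,t\le\lfloor L_N\rfloor:\widetilde\bfeta^{\delta_\zero}_t\ge\delta_{\B_R(\zero)})\ge1-(1-p_0)^{\lfloor L_N\rfloor}\to1$. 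Intersecting this event with the independent event $\{T_0>L_N\}$ (probability $\to1$) and using the domination on $[0,T_0)$ proves Step~1.

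The main obstacle is Step~1, and within it the point that a configuration dominating $\delta_{\B_R(\zero)}$ is reached only at \emph{some} random time, not at a fixed one: a single window succeeds with the fixed, possibly small, probability $p_0$, so one genuinely needs the order-$\log N$ retries made available by the long survival of the origin, and the frozen-source process $\widetilde\bfeta$ is introduced precisely to decouple these retries from the event $\{T_0>L_N\}$ that provides the time for them. Everything else --- the elementary birth--death couplings, the strong Markov property, monotonicity, and Lemma~\ref{lem:AsymtoticSurvivalAS} itself --- is routine.
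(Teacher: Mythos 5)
Your proposal is correct and follows essentially the same strategy as the paper's proof: the heavily loaded origin remains infected for a time diverging with $N$, during which repeated unit-time attempts reach a configuration dominating a fixed large finite set with probability tending to one, after which Lemma~\ref{lem:AsymtoticSurvivalAS}, monotonicity and the strong Markov property give survival. The only real difference is bookkeeping: the paper argues by contradiction and bounds the failure probability by $\IE\big[(1-\varepsilon')^{\lfloor T_0^{N}\rfloor}\big]$ using only that $T_0^N\to\infty$ almost surely, whereas you quantify $T_0\gtrsim c\log N$ and decouple the repeated trials from $T_0$ through the frozen-source process $\widetilde{\bfeta}$, which is a slightly more elaborate but clean way of handling the same dependence.
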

\begin{proof}
	First set $T_0^N:=\inf\{t>0:\bfeta_t^{N\delta_{\zero}}(\zero)=0\}$. A comparison with a pure death process with rate function $\death$ yields that $\IE[T_0^N]\geq \sum_{n=1}^{N}\tfrac{1}{d(n)}$. Therefore, it holds that $\IE[T_0^N]\to \infty$ as $N\to \infty$, since $\death\in O(n)$. Now, \cite[Proposition~7.10]{cattiaux2009quasi} implies that $T_0^N\to \infty$ almost surely.
	
	
	Suppose the statement is false, which means that there exists an $\varepsilon>0$ such that
	\begin{equation*}
		\lim_{N\to \infty}\IP\big(|\bfeta^{N\delta_{\zero}}_t|>0 \, \forall \,t\geq 0\big)< 1-\varepsilon<1.
	\end{equation*}
	Then, by monotonicity follows that $\IP\big(|\bfeta^{N\delta_{\zero}}_t|>0 \, \forall \,t\geq 0\big)< 1-\varepsilon$
	for all $N>0$. On the other hand, it follows by Lemma~\ref{lem:AsymtoticSurvivalAS} that there exists a $\eta_\varepsilon$ such that
	\begin{equation*}
		\IP\big(|\bfeta^{\eta_{\varepsilon}}_t|>0 \,\forall\, t\geq 0\big)>1-\frac{\varepsilon}{3}.
	\end{equation*}
	It again follows by monotonicity that on the event that $\bfeta_t(\zero)>0$ it holds that
	\begin{equation*}
		\IP(\bfeta_{t+1}\geq \eta_{\varepsilon}|\bfeta_{t})\geq \IP(\bfeta_{t+1}\geq \eta_{\varepsilon}|\bfeta_{t}=\delta_{\zero})>\varepsilon',
	\end{equation*}
	where $\varepsilon'$ only depends on $\eta_{\varepsilon}$ and the left hand side is the probability that we reach a larger state than $\eta_{\varepsilon}$ after a time period of length $1$. 
	
	In the next step we can use the strong Markov property to conclude that
	\begin{align*}
		\IP\big(|\bfeta^{N\delta_{\zero}}_t|<|\eta_{\varepsilon}| \,\forall\, t\in \{1,\dots, \lfloor T_0^{N}\rfloor\}\big)&<\IE\bigg[\prod_{i=1}^{\lfloor T_0^{N}\rfloor}\IP\big(|\bfeta^{N\delta_{\zero}}_{i}|<|\eta_{\varepsilon}|\,\big|\bfeta^{N\delta_{\zero}}_{i-1}\big)\bigg]\\
		&<\IE\big[(1-\varepsilon')^{\lfloor T_0^{N}\rfloor}\big].
	\end{align*}
	Since we know that $T_0^N\to \infty$ almost surely as $N\to \infty$ we obtain with monotone convergences that $\IE\big[(1-\varepsilon')^{\lfloor T_0^{N}\rfloor}\big]\to 0$. Thus, there exists an $N_0\geq 0$ such that $\IE\big[(1-\varepsilon')^{\lfloor T_0^{N}\rfloor}\big]<\frac{\varepsilon}{3}$ for all $N\geq N_0$
	\begin{equation*}
		\begin{aligned}
			\IP(\exists t\leq T_0^N \text{ s.t. } \bfeta^{N\delta_{\zero}}_t\geq \eta_{\varepsilon})&\geq \IP(\bfeta^{N\delta_{\zero}}_t\geq \eta_{\varepsilon} \text{ for some } t\in \{1,\dots, \lfloor T_0^N\rfloor \big)\\
			&\geq 1-\IE\big[(1-\varepsilon')^{\lfloor T_0^{N}\rfloor} \big]>1-\frac{\varepsilon}{3}.
		\end{aligned}
	\end{equation*}
	However, by using again the strong Markov property it follows that
	\begin{equation*}
		\IP\big(|\bfeta^{N\delta_{\zero}}_t|>0 \forall t\geq 0\big)>\Big(1-\frac{\varepsilon}{3}\Big)^2>1-\varepsilon,
	\end{equation*}
	which leads to a contradiction.
\end{proof}

Finally, we can show the desired result and establish the formal connection between the two phase transitions.

\begin{proof}[Proof of Proposition~\ref{thm:PhasetrastionCPLI}]
	Let us assume that $\lambda\neq \lambda_c$. Thus, by Lemma~\ref{lem:AysmtoticSurvivalAtOneSite} we know that 
	\begin{equation*}
		\lim_{N\to \infty}\IP(|\bfeta^{N\delta_y}_t|>0 \,\forall\, t\geq 0)=\begin{cases}
			1   &\text{ if } \lambda>\lambda_c\\
			0   &\text{ if } \lambda<\lambda_c.
		\end{cases}
	\end{equation*}
	By using the duality relation stated in Proposition~\ref{prop:Duality} and monotonicity of the measure $\overline{\mu}$ we get that
	\begin{equation*}
		\lim_{N\to \infty}\IP(|\bfeta^{N\delta_y}|>0 \, \forall \, t\geq 0)=\lim_{N\to \infty}\overline{\mu}(\xi: \xi(y)< N)=\overline{\mu}(\xi: \xi(y)< \infty).
	\end{equation*}
	Now consider a sequences $(V_n)_{n\in \N}$ of finite subset of $V$, such that $V_n\subset V_{n+1}$ and $\bigcup_{n\in \N}V_n=V$. Then we see that for $\lambda>\lambda_c$ 
	\begin{equation*}
		\overline{\mu}(\xi: \xi(y)<\infty \,\forall\, y\in V_n )= \overline{\mu}\bigg(\bigcap_{y\in V_n}\{\xi:  \xi(y)<\infty\}\bigg)=1
	\end{equation*}
	for every $n\geq 0$, where we used the fact that the intersection of finitely many sets with mass $1$ has again mass $1$. By using continuity from above we get that
	\begin{equation*}
		\overline{\mu}(\xi: \xi(y)<\infty \,\forall\, y\in V)= \lim_{n\to 0} \overline{\mu}\,\bigg(\bigcap_{y\in V_n}\{\xi:  \xi(y)<\infty\}\bigg)=1.
	\end{equation*}
	Analogously it follows for $\lambda<\lambda_c$
	\begin{equation*}
		\overline{\mu}(\xi: \exists x\in V_n \text{ s.t. } \xi(y)<\infty)= \overline{\mu}\,\bigg(\bigcup_{y\in V_n}\{\xi:  \xi(y)<\infty\}\bigg)=0
	\end{equation*}
	and by using continuity from below we get that
	\begin{equation*}
		\overline{\mu}(\xi:  \exists x\in V \text{ s.t. } \xi(y)<\infty)= \lim_{n\to 0} \overline{\mu}\,\bigg(\bigcup_{y\in V_n}\{\xi:  \xi(y)<\infty\}\bigg)=0.\qedhere
	\end{equation*}
\end{proof}
\textbf{Acknowledgements:} I would like to thank Daniel Valesin and Michel Reitmeier for insightful discussions and for their valuable remarks and suggestions regarding the research presented here. I would also like to thank the anonymous referees for their detailed comments and suggestions which helped me to improve this article. The author was supported by the German Research Foundation (DFG) Project ID: 531542160 within the priority programme SPP 2265.
\bibliographystyle{abbrv}
\bibliography{references}

\end{document}